\definecolor{lightgray}{rgb}{.9,.9,.9}
\definecolor{gray}{rgb}{.7,.7,.7}
\definecolor{darkgray}{rgb}{.5,.5,.5}
\definecolor{darkestgray}{rgb}{.3,.3,.3}
\newtheorem*{rep@theorem}{\rep@title}
\newcommand{\newreptheorem}[2]{%
\newenvironment{rep#1}[1]{%
 \def\rep@title{#2 \ref{##1}}%
 \begin{rep@theorem}}%
 {\end{rep@theorem}}}
\DeclareMathOperator{\sqdeg}{\mathop{sqdeg}}
\DeclareMathOperator{\Gr}{\mathop{Gr}}
\DeclareMathOperator{\Tor}{\mathop{Tor}}
\DeclareMathOperator{\rk}{\mathop{rk}}
\DeclareMathOperator{\ea}{\mathop{ea}}
\DeclareMathOperator{\ia}{\mathop{ia}}
\DeclareMathOperator{\ep}{\mathop{ep}}
\DeclareMathOperator{\EP}{\mathop{EP}}
\DeclareMathOperator{\IN}{\mathop{IN}}
\DeclareMathOperator{\Sym}{\mathop{Sym}}
\DeclareMathOperator{\spann}{\mathop{span}}
\DeclareMathOperator{\Star}{\mathop{St}}
\DeclareMathOperator{\supp}{\mathop{supp}}
\DeclareMathOperator{\coker}{\mathop{coker}}
\DeclareMathOperator{\lex}{\mathrm{lex}}
\DeclareMathOperator{\HPoin}{\mathrm{HPoin}}
\DeclareMathOperator{\pos}{\mathop{pos}}
\DeclareMathOperator{\NBC}{\mathop{NBC}}
\DeclareMathOperator{\closure}{\mathop{cl}}
\newtheorem{thm}{Theorem}[section]
\newtheorem*{thm*}{Theorem}
\newtheorem{cor}[thm]{Corollary}
\newtheorem{lem}[thm]{Lemma}
\newtheorem*{claim*}{Claim}
\newtheorem{prop}[thm]{Proposition}
\theoremstyle{definition}
\newtheorem{definition}[thm]{Definition}
\theoremstyle{remark}
\newtheorem{rmk}[thm]{Remark}
\newtheorem{example}[thm]{Example}
\title{The Singular 
		Cohomology Ring of a Matroid}
\author{Kyle Binder}
\address{Department of Mathematics, Louisiana State University}
\email{kbinde1@lsu.edu}
\begin{document}

\begin{abstract}
	We introduce the singular cohomology ring of a matroid which extends the 
	Chow ring of a matroid. This is defined as the singular cohomology ring
	of a certain quasi-projective toric variety associated to the matroid.
	Using the matroidal flips of Adiprasito,
	Huh, and Katz, we prove sharp vanishing results for the cohomology ring and
	compute the dimension of the top-weight cohomology in terms of the M\"{o}bius
	invariant of the matroid. In the case of uniform matroids,
	these techniques give a recursive formula for the 
	Hodge numbers. 
	Finally, we generalize the singular cohomology ring to arbitrary 
	building sets on the lattice of flats, and we show how
	the cohomology depends on the building set.
\end{abstract}

\maketitle

 	\section{Introduction}
		The Chow ring $ A^{\bullet}(M) $ of a loopless matroid $M$ 
		is defined to be the Chow ring of the toric variety $ X_{\Sigma_{M}} $ 
		associated to the \textit{Bergman fan} $ \Sigma_{M} $. This ring has several
		surprising properties.
		Even though $ X_{\Sigma_{M}} $ is often not projective, only quasi-projective,
		Adiprasito, Huh, and Katz \cite{AHK} show that
		$ A^{\bullet}(M) $ acts like the Chow ring of a projective variety, satisfying
		the \textit{K\"{a}hler package} of Poincar\'{e} duality, Hard Lefschetz, 
		and the Hodge--Riemann relations. These properties of the Chow ring,
		as well as the combinatorial information encoded in various 
		intersection numbers, were foundational to the resolution of
		Mason's Conjecture and the Heron--Rota--Welsh Conjecture.
		Since their introduction, Chow rings remain a central object in the geometric
		theory of matroids.

		Due to the prevalence of the Chow ring of the toric variety $ X_{\Sigma_{M}} $,
		it is natural to ask what other algebro-geometric invariants of 
		$ X_{\Sigma_{M}} $ look like. In this paper we define the 
		\textit{singular cohomology ring of a matroid $M$}  to be the
		singular cohomology ring $ H^{\bullet}(X_{\Sigma_{M}}) $ of
		$ X_{\Sigma_{M}} $. (All Chow and cohomology groups here are taken with rational coefficients.)
		This extends the Chow ring of a matroid, as the cycle class map
		$ A^{\bullet}(M) \hookrightarrow H^{2 \bullet}(X_{\Sigma}) $ is injective.
		Moreover, this extension is often interesting, as
		quasi-projective varieties like $ X_{\Sigma_{M}} $
		frequently have cohomology not seen by the Chow ring.

		Like the Chow ring of a matroid, the singular cohomology ring
		has an explicit combinatorial description. 
		Recall Feichtner--Yuzvinsky's presentation \cite{FY} for
		the Chow ring of a matroid $M$ on the ground set $ [n] := 
		\left\{ 1, \dots, n \right\} $: 
		\begin{equation}\label{eq:FYPresentation}
						A^{\bullet}(M) \cong \frac{\mathbb{Q}[x_{F} : \textrm{$F$ proper,
						non-empty flat}]}{I + J}
						\cong \frac{\mathbb{Q}[\Sigma_{M}]}
							{J},
		\end{equation}
		where $ I := (x_{F} x_{G} : \textrm{$F$, $G$ incomparable} ) $
		is the \textit{Stanley--Reisner ideal},
		$ \mathbb{Q}[\Sigma_{M}] := \mathbb{Q}[x_{F}]_{F} /I $ is
		the \textit{Stanley--Reisner ring}, and
		$ J = (\sum_{i \in F} x_{F} - \sum_{j \in G} x_{G}: i,j \in [n] ) $.
		The ideal $J$ is naturally the image of 
		the map
		\begin{equation*}
						\mathbb{Q}[\ell_{i} - \ell_{j} : i,j \in [n]] \subseteq 
						\mathbb{Q}[\ell_{1}, \dots, \ell_{n}] \to \mathbb{Q}[\Sigma_{M}],\;\;\;
						\ell_{i} - \ell_{j} \longmapsto \sum_{i \in F} x_{F} - \sum_{j \in G } x_{G}.
		\end{equation*}
		For the singular cohomology ring, Franz \cite{franzRing}*{Theorem 1.2} gives a ring isomorphism
		\begin{equation}\label{franzIsomorphism}
				H^{\bullet}(X_{\Sigma_{M}}) \cong
				\Tor_{\bullet}^{\mathbb{Q}[\ell_{i} - \ell_{j}]_{i,j}}
				(\mathbb{Q}[\Sigma_{M}],
				\mathbb{Q}),
		\end{equation}
		and these Tor groups have a description in terms of 
		Koszul homology $H_{\bullet}(K(\mathbb{Q}[\Sigma_{M}])) $. 
		This is defined to be the homology of the \textit{Koszul complex}
		$ K_{\bullet}(\mathbb{Q}[\Sigma_{M}]) $ whose differential is defined combinatorially
		from $M$.
		This presentation allows us to explicitly
		compute the singular cohomology of small matroids (for instance, see
		Examples \ref{ex:U23} and \ref{ex:U34}), and bases for Koszul 
		homology appear to exhibit a strong combinatorial structure (for example,
		see Lemma \ref{lem:koszulH1}). Additionally, the isomorphism 
		\eqref{franzIsomorphism} constructs another pathway between 
		algebraic geometry and combinatorial commutative algebra, and both
		perspectives are indispensible in our study of the singular cohomology ring
		of a matroid.

		\subsection{Main results}
		Our goal for this paper is to describe the Betti numbers, more
		specifically the \textit{Hodge numbers}, of $ X_{\Sigma_{M}} $ in terms of
		the combinatorics of the matroid $M$. Our main theorem is a sharp vanishing
		result for $H^{\bullet}(X_{\Sigma_{M}}) $ and a computation of the top-weight
		cohomology. 

		\begin{reptheorem}{thm:vanishingCohomology}
			Let $ M$ be a loopless matroid of rank $ r $ on the ground set $[n]$. Then,
			\begin{enumerate}
				\item $ \Gr_{j}^{W} H^{k}(X_{\Sigma_{M}}) = 0 $ whenever
								$ j > n -r+k $ or $ j < 2k -2r + 2 $.
				\item $ H^{k}(X_{\Sigma_{M}}) = 0 $ whenever $ k > n + r - 2 $.
			\end{enumerate}
			Moreover, $ \Gr^{W}_{2n-2} H^{n+r-2}(X_{\Sigma_{M}}) $ is the top-weight
			cohomology, and its dimension is $ \mu(M) $.
		\end{reptheorem}
		Here, $ \mu(M) $ is the \textit{M\"{o}bius invariant} of $M$ which counts
		the number of \textit{no-broken-circuit bases}. It is non-zero for
		any loopless matroid. In fact, we prove a stronger result in Theorem 
		\ref{thm:vanishingCohomology} and show the same vanishing for toric
		varieties $ X_{\Sigma_{M, \mathcal{P}}} $ associated to the \textit{Bergman
		fan of an order filter} $ \mathcal{P} $.

		For uniform matroids, we give a recursive 
		formula for all of the Hodge numbers of $ X_{\Sigma_{U_{r,n}}} $. This is 
		best packaged in terms of the \textit{Hodge--Poincar\'{e} polynomial},
		$\HPoin(X_{\Sigma_{U_{r,n}}}) $, which is the generating
		function for the Hodge numbers (Definition \ref{def:hpPolynomial}).

		\begin{reptheorem}{thm:uniform}
			Let $ U_{r,n} $ be the uniform matroid of rank $ r $ on 
			the ground set $[n]$. Then,
			\[
			\HPoin\left( X_{\Sigma_{U_{r,n}}} \right)
			= 	\sum_{i=0}^{r-1} x^{i} + \sum_{k=0}^{n-r-1} \binom{r+k}{r-1} \cdot 
				wx^{r}(1+wx)^{k} + \sum_{i=1}^{r-1} \binom{n}{i} \frac{x-x^{i}}{1-x}
			\HPoin\left( X_{\Sigma_{U_{r-i,n-i}}} \right).
			\]
		\end{reptheorem}

		Bergman fans and Chow rings of matroids have been generalized with respect to
		arbitrary building sets on the lattice of flats $ \mathcal{L}(M) $. We generalize the
		singular cohomology ring for an arbitrary building set $ \mathcal{G} $
		as the singular cohomology of	$ X_{\Sigma_{M, \mathcal{G}}} $,
		where $ \Sigma_{M, \mathcal{G}} $ is the Bergman fan associated to $ \mathcal{G} $.
		It is well known that for a given matroid $M$, the Bergman fans associated to
		any two building sets on the lattice of flats are related by
		a sequence of stellar subdivisions.  We describe how the Hodge--Poincar\'{e}
		polynomial changes under one of these stellar subdivisions.

		\begin{reptheorem}{thm:changeBuildingSet}
				Let $M$ be a loopless matroid on the ground set $[n]$, and let 
				$ \mathcal{H} $ and $ \mathcal{G} $ be two building sets on
				$ \mathcal{L}(M) $ which both contain $[n]$ 
				and whose difference is a single flat $ Z =
				\mathcal{H} \setminus \mathcal{G} $. 
				Write $ \left\{ F_{1}, \dots, F_{s} \right\} = f(\mathcal{G}\big|_{Z}) $
				for the factors of $ Z $ in $ \mathcal{G} $.
				Then,
				\[ 
						\HPoin\left(X_{\Sigma_{M, \mathcal{H}}}\right) 
							= \HPoin\left(X_{\Sigma_{M, \mathcal{G}}} \right)
								+ \frac{1 - x^{s}}{1-x}
								\cdot \prod_{i=1}^{s} 
								\HPoin\left(X_{\Sigma_{M^{F_{i}}, \mathcal{G}^{F_{i}}}}
								\right) \cdot \HPoin\left( X_{\Sigma_{M_{Z}},
									\mathcal{G}_{Z}} \right). 				\]
		\end{reptheorem}

		This result generalizes \cite{EFMPV}*{Theorem 5.1} which shows that the Hilbert
		series of the Chow ring behaves similarly when changing the building
		set.

		\subsection{Outline} 
			Let us describe the structure of this paper. 

			In Section \ref{section:background},
			we recall the background and notation we need on matroids, Bergman fans,
			toric varieties, and Koszul homology. Section \ref{section:singCohomology}
			defines the \textit{singular cohomology ring of a matroid}, and we compute
			some small examples to illustrate Theorem \ref{thm:vanishingCohomology}.
			We also show that the singular cohomology ring is sensitive to 
			parallel elements in the matroid, unlike the Chow ring of the matroid.

			Sections \ref{section:liftEmpty}--\ref{section:flips} form the heart 
			of the paper in proving Theorem \ref{thm:vanishingCohomology}.
			There are three main stages to the proof. In the first two stages,
			we define toric varieties, $ X_{\widehat{\Sigma}_{M, \emptyset}
			\setminus \rho_{[n]}} $
			and $ X_{\Sigma_{M, \emptyset}} $ which are related to $ X_{\Sigma_{M}} $ and
			whose cohomology we compute through combinatorial commutative algebra.
			The Hodge--Poincar\'{e} polynomilas of these varieties are interesting in their own right.
			In Section \ref{section:liftEmpty}, we compute $ \HPoin(X_{\widehat{\Sigma}_{M, \emptyset}
			\setminus \rho_{[n]}}) $ through an application of Hochster's formula to toric subvarieties
			of affine space. Surprisingly, this Hodge--Poincar\'{e} polynomial is a specialization of 
			the Tutte polynomial of $M$. 
			In Section \ref{section:empty}, we use the weight filtration on $H^{\bullet}(X_{\Sigma_{M, 
			\emptyset}}) $ to then deduce
			$ \HPoin(X_{\Sigma_{M, \emptyset}}) $. 
			The last stage of the proof is Section \ref{section:flips}, where
			we construct a sequence of toric morphisms interpolating between 
			$ X_{\Sigma_{M, \emptyset}} $ and $ X_{\Sigma_{M}} $ using \textit{matroidal flips}. 
			We show that each map in this sequence is close to a blow-up, and we construct \textit{the long
			exact sequence of a matroidal flip} comparing cohomology under these maps.
			These exact sequences produce an isomorphism on the top-weight cohomology which we use
			to deduce Theorem \ref{thm:vanishingCohomology}. 

			In Sections \ref{section:Chow} and \ref{section:uniform}, we use the long exact 
			sequence of a matroidal flip in the context of Chow rings and uniform matroids, respectively.
			The long exact sequence of a matroidal flip induces a short exact sequence of Chow rings,
			which recovers \cite{AHK}*{Theorem 6.18}.
			In the context of uniform matroids, we show that long exact sequences of matroidal flips split,
			and we use this compute the Hodge--Poincar\'{e} polynomial of $ X_{\Sigma_{U_{r,n}}} $. 
			We describe how the Hodge numbers of $ X_{\Sigma_{M, \mathcal{G}}} $
			vary for different choices of building set $ \mathcal{G} $ in
			Section \ref{section:buildingSets}. 

			At the end of this paper,
			we have included Appendix \ref{section:blowups} on the cohomology of 
			toric blow-ups. Here, we use explicit computations 
			in Koszul homology to prove the classical result relating the
			cohomology of the blow-up in terms of the cohomology of the base 
			space, center, and exceptional divisor. We rely upon these results
			and computations throughout the paper.

			\subsection*{Acknowledgements}
			We thank Eric Katz, whose thoughts benefited from his discussions 
			with David Speyer, for his suggestion of this problem, and David Anderson 
			for discussions on the cohomology of 
			toric varieties. This work also benefited from helpful 
			conversations with Juliette Bruce, Alex Fink, and Matt Larson.
			This material is based upon work supported by the National Science 
			Foundation under Grants DMS-2231565 and DMS-2231492.

	\section{Background and notation}\label{section:background}
		In this section we introduce the background and notation we need 
		on matroid theory, Bergman fans, toric geometry, and Koszul homology.

	\subsection{Matroids}
		For details on matroids not defined here, see \cite{oxley}.

		Let $M$ be a matroid on the ground set $[n] = \left\{ 1, \dots, n \right\}$, 
		let $ \mathcal{B}(M) $ denote the set of 
		\emph{bases} of $M$, and let $ \mathcal{L}(M) $  be the \emph{lattice of 
		flats} of $M$. A \emph{proper, non-empty flat} is an element of 
		$ \overline{\mathcal{L}}(M) := \mathcal{L}(M) \setminus
		\left\{ \hat{0}, \hat{1} \right\} $, where $ \hat{0} $ and 
		$ \hat{1} $ are the minimal and maximal elements of $ \mathcal{L}(M) $,
		respectively.  		
		For a subset $ S \subseteq [n] $, we denote its closure by $ \closure(S) $
		and its rank by $ \rk(S) $.
		We write $ M^{*} $ for the dual matroid of $M$. By definition,
		$ \mathcal{B}(M^{*}) = \left\{ [n] \setminus B : B \in \mathcal{B}(M) \right\} $.

		Given a flat $ F$, $ M_{F} $ is the
		\emph{contraction} of $M$ to $ F $. The lattice $ \mathcal{L}(M_{F}) $
		is isomorphic to the interval $ [F, \hat{1}] $ in $ \mathcal{L}(M) $.
		Similarly, $ M^{F} $ is the \emph{restriction} of $M$ to $ F $.
		The lattice $ \mathcal{L}(M^{F}) $ is isomorphic to the interval
		$ [ \hat{0}, F] $. We can also define the \emph{restriction} of $M$ to
		$ W$ for any subset $ W \subseteq [n] $ of the ground set. We denote this by
		$ M\big|_{W} $.

		Our running convention is that $M$ is a loopless matroid of rank $ r \geq 1 $,
		defined on the ground set $ [n]$. Importantly, we allow $M$ to have parallel 
		edges, as the singular cohomology ring
		of $M$ is sensitive to these elements (see Proposition \ref{prop:parallelEdges}).

		\begin{example}
			An important class of matroids are the \textit{uniform matroids}. For 
			integers $ n $ and $ r $ with $ n \geq 1 $ and $ 1 \leq r \leq n $, the 
			uniform matroid of rank $ r $ on the ground set $ [n] $ is the matroid
			$U_{r,n} $ with bases
			\[ 
				\mathcal{B}(U_{r,n}) = \left\{ B \subseteq [n] : \left| B \right| = r \right\}.
			\]
			The lattice of flats $ \mathcal{L}(U_{r,n}) $ consists of the set
			$[n]$ along  with all subsets of
			$ [n] $ of cardinality at most $ r-1$. 		
		\end{example}

		\subsubsection{Basis activity}
			The natural linear order on $[n]$ defines \textit{basis 
			activity} for $M$.

		\begin{definition}
			Fix a basis $ B \in \mathcal{B}(M) $. 
			\begin{enumerate}
				\item An element $ x \in [n] \setminus B $ is \emph{externally
									active} with respect to $ B$ if $ x $ is the smallest element of 
									the fundamental circuit $ C_{B \cup x} $. Otherwise,
							$ x $ is \emph{externally passive} with respect to $B$.
			\item An element $ x \in B $ is \emph{internally active} with respect to $B$ if it is
						externally active with respect to the dual basis $[n]\setminus B $
						in the dual matroid $ M^{*} $.
			\end{enumerate}
						We define $ \EP(B) \subseteq [n] $ 
						to be the set of externally active elements with respect to $ B $ and
						$ \ea(B) $, $ \ep(B) $, and $ \ia(B) $ to be the 
						number of externally active, externally passive, and internally
						active elements with respect to $B$, respectively. 
					
		\end{definition}

		The  \emph{Tutte polynomial} is the generating function for bases 
		with certain internal and external activity
			\[ 
				T_{M}(x,y) := \sum_{B \in \mathcal{B}(M)} x^{\ia(B)} y^{\ea(B)}.
			\]
	  This  polynomial specializes to many classical invariants of graphs and
		matroids \cite{brylawskiOxley}.
		For example, the specialization $ T_{M}(1,0) $ counts the number of 
		bases $B$ with $ \ea(B) = 0 $.
		These are called the \emph{no-broken-circuit
		bases} of $M$, and we define $ \NBC(M) $ to be the set
		of no-broken-circuit bases. The cardinality of $ \NBC(M) $
		is the \emph{M\"{o}bius invariant} of $M$, which is written $ \mu(M) $.
		For loopless matroids, $ \mu(M) > 0 $.
					
		\subsection{Bergman fans of order filters}
			We define two families of rational fans that generalize
			the \textit{Bergman fan} of a matroid studied
			in \cite{ardilaKlivans}. Each fan
			in these families arises from a special choice of subset of 
			$ \mathcal{L}(M) \setminus \hat{0}  $,
			either an \textit{order filter} or \textit{building
			set}.
			
			For notation in defining rational fans, let $ e_{i} $ be the
			$i$-th basis element of $ \mathbb{Z}^{n} $. For
			$ S \subseteq [n] $, let $e_{S} := \sum_{i \in S} e_{i}$. We write
			$ \pos(E) $ for the cone of positive real linear combinations of a set
			of points
			$ E \subseteq \mathbb{Z}^{n} $. 
			For a (possibly empty) chain of flats $ \mathcal{F} \subseteq 
			\mathcal{L}(M) \setminus \hat{0}
			$ and 
			subset $ I \subseteq [n] $, we write $ I < \mathcal{F} $ if 
			$I$ is strictly contained in each flat of $\mathcal{F} $.
			To such a pair $ I < \mathcal{F} $, define the cone
			\[ 
				\sigma_{I < \mathcal{F}} := \pos \left(e_{i}, e_{F} : i \in I \textrm{ and }
				F \in \mathcal{F}\right).
			\]

			Recall that an order filter 
			on a poset is a subset which is upward-closed.

			\begin{definition}
			Let $ \mathcal{P} $ be an order filter on $ \mathcal{L}(M) \setminus
			\hat{0} $ which
			contains $ [n] $.
			The \emph{non-reduced Bergman fan} of $ \mathcal{P} $ is
			the unimodular fan
						\[ \widehat{\Sigma}_{M, \mathcal{P}} :=  
							\left\{ \sigma_{I < \mathcal{F}} : 
							\mathcal{F} \subseteq \mathcal{P} \textrm{ and }
							\closure(I) \notin \mathcal{P}
							\right\}.
						\]
			The rays of $ \widehat{\Sigma}_{M, \mathcal{P}} $ are 
			$ \rho_{F} := \pos(e_{F}) $ for $ F \in \mathcal{P} $ and 
			$ \rho_{i} := \pos(e_{i}) $ for $ i \in [n]  $ with $ \closure(i) \notin
			\mathcal{P} $.
		\end{definition}
		As we assume that $ [n] \in \mathcal{P} $,  the fan
		$ \widehat{\Sigma}_{M, \mathcal{P}} $ has lineality space
		$ \mathbb{Z}\cdot e_{[n]} $.

		\begin{definition}
			Let $ \mathcal{P} $ be an order filter on $ \overline{\mathcal{L}}(M) $. 
			The \emph{Bergman fan} of $ \mathcal{P} $, denoted
			$ \Sigma_{M, \mathcal{P}} $, is the quotient fan
			of $ \widehat{\Sigma}_{M, \mathcal{P} \cup [n] } $ in $ \mathbb{Z}^{n}/
			\mathbb{Z}\cdot e_{[n]} $.

			The rays of $ \Sigma_{M, \mathcal{P}} $ are the images of 
			$ \rho_{F} $ for $ F \in \mathcal{P} $ and $ \rho_{i} $ for $ i \in [n]$
			with $ \closure(i) \notin \mathcal{P}\cup [n] $.
		\end{definition}

		\begin{rmk}
		We choose to work with the Bergman fan rather than the non-reduced
		Bergman fan, although this choice is largely irrelevant. The difference amounts to working
		with the \textit{star} of $ \rho_{[n]} $ rather than the \textit{closed star}
		(Definition \ref{def:star}).
		By Lemma \ref{lem:starIsomorphism}, this difference does not matter when computing 
		cohomology.
		\end{rmk}

		The Bergman fans of the maximal and minimal order filters will be most important for us.

		\begin{example}
			For a loopless matroid $M$, take $ \mathcal{P} = \overline{\mathcal{L}}(M) $.
			Then, $ \Sigma_{M, \mathcal{P}} $ is the standard Bergman fan $ \Sigma_{M} $.
			The rays of $ \Sigma_{M} $ are in bijection with the proper,
			nonempty flats of $ M $, and the cones correspond to chains of these flats.
			In other words, $ \Sigma_{M} $ is a realization of the cone over the order
			complex of $ \overline{\mathcal{L}}(M) $.
		\end{example}

		\begin{figure}
        \centering
 \begin{tikzpicture}[scale=.9]
            \node (1234) at (0,0) {$1234$};
            \node (12) at (-3,-2) {$12$};
            \node (13) at (-1,-2) {$13$};
            \node (14) at (1,-2) {$14$};
            \node (234) at (3,-2) {$234$};
            \node (1) at (-3,-4) {$1$};
            \node (2) at (-1,-4) {$2$};
            \node (3) at (1,-4) {$3$};
            \node (4) at (3,-4) {$4$};
            \node (empty) at (0,-6) {$\emptyset$};
            \draw (1234) -- (12);
            \draw (1234) -- (13);
            \draw (1234) -- (14);
            \draw (1234) -- (234);
            \draw (12) -- (1);
            \draw (12) -- (2);
            \draw (13) -- (1); 
            \draw (13) -- (3);
            \draw (14) -- (1);
            \draw (14) -- (4);
            \draw (234) -- (2);
            \draw (234) -- (3);
            \draw (234) -- (4);
            \draw (1) -- (empty);
            \draw (2) -- (empty);
            \draw (3) -- (empty);
            \draw (4) -- (empty);

						\node at (0,-6.75) {$ \mathcal{L}(M^{\mathrm{br}})$};
        \end{tikzpicture}
            \tdplotsetmaincoords{86}{73}
            \begin{tikzpicture}[tdplot_main_coords, scale =1.7]
             \filldraw[darkgray] (0,0,0) -- (-1.2,-1.2,-1.2) -- (0,2,0) -- cycle;
             \filldraw[darkestgray] (-1.2,-1.2,-1.2) -- (2,0,0) -- (0,2,0) -- cycle;
             \filldraw[gray] (0,0,0) -- (0,2,0) -- (0,0,2) --cycle;          
             \filldraw[gray] (0,0,0) -- (0,0,2) -- (-1.2,-1.2,-1.2)--cycle;
             \filldraw[darkgray] (0,0,0) -- (-1.2,-1.2,-1.2) -- (2,0,0) -- cycle;
             \filldraw[darkgray] (0,0,0) -- (2,0,0) -- (0,2,0) -- cycle;
             \filldraw[lightgray] (0,0,0) -- (2,0,0) -- (0,0,2) --cycle;
             \draw[->] (0,0,0) -- (2.1,0,0) node [anchor = south east] {$\rho_{3}$};
             \draw[->] (0,.58,0) -- (0,2.1,0) node[anchor = north west] {$\rho_{2}$};
             \draw[->] (0,0,0) -- (0,0,2.1) node[anchor = south] {$\rho_{1}$};
             \draw[->] (0,0,0) -- (-1.25,-1.25,-1.25) 
											node[anchor= north] {$\rho_{4}$};                     
             \draw[->] (0,.46,.46) -- (0,1.07, 1.07) 
											node [anchor = south west] {$\rho_{12}$};   
             \draw[->] (0,0,0) -- (1.07,0,1.07) 
											node [anchor = south west] {$\rho_{13}$};
             \draw[->] (0,0,0) -- (-.9, -.9,0) node[anchor=east] {$\rho_{14}$};
                    \fill (0,0,0) circle[radius=1pt];
							\node at (0,0,-1.3) {$\Sigma_{M^{\mathrm{br}}, \emptyset}$};
						\end{tikzpicture} \\\vspace*{.5em}
					\begin{tikzpicture}[tdplot_main_coords, scale =1.7]
                    \filldraw[darkestgray] (0,0,0) -- (0,2,0) -- (0,0,2) --cycle;

                    \filldraw[darkestgray] (0,0,0) -- (0,2,0) -- (0,1,1) --cycle;
                    \filldraw[darkestgray] (0,0,0) -- (0,2,0) -- (0,0,-.555) --cycle;

                    \draw[->] (0,0,0) -- (0,1.1, 1.1) node [anchor = south west] {$\rho_{12}$};
                    \draw[->] (0,.37,0) -- (0,2.1,0) node[anchor = north west] {$\rho_{2}$};
                    \filldraw[darkgray] (0,0,0) -- (-1.2,-1.2,-1.2) -- (-.8,-.8,0)--(0,0,2) --cycle;
                    \filldraw[gray] (0,0,0) -- (2,0,0) -- (0,0,2)--cycle;
                    \filldraw[gray] (0,0,0) -- (2,0,0) -- (0,0,-.555)--cycle;
                    \filldraw[darkgray] (0,0,0) -- (-1.2,-1.2,-1.2) -- (0,0,-.555)--cycle;
                    cycle;
                    \draw[->] (0,0,0) -- (2.1,0,0) node [anchor = south east] {$\rho_{3}$};
                    \draw[->] (0,0,0) -- (0,0,2.1) node[anchor = south] {$\rho_{1}$};
                    \draw[->] (0,0,0) -- (-1.25,-1.25,-1.25) node[anchor= north] {$\rho_{4}$};
                     \draw[->] (0,0,0) -- (1.07,0,1.07) node [anchor = south west] {$\rho_{13}$};
                    \draw[->] (0,0,0) -- (-.9, -.9,0) node[anchor=east] {$\rho_{14}$};
                    \draw[->] (0,0,0) -- (0,0,-.8) node[anchor = north east] {$\rho_{234}$};
                    \fill (0,0,0) circle[radius=1pt];

										\node at (0,0,-1.6) {$\Sigma_{M^{\mathrm{br}}}$};
         \end{tikzpicture}%
				\begin{tikzpicture}[tdplot_main_coords, scale =1.7]
                    \filldraw[darkestgray] (0,0,0) -- (0,2,0) -- (0,0,2) --cycle;

                    \filldraw[darkestgray] (0,0,0) -- (0,2,0) -- (0,1,1) --cycle;
                    \filldraw[darkestgray] (0,0,0) -- (0,2,0) -- (0,0,-.555) --cycle;

                    \draw[->] (0,.37,0) -- (0,2.1,0) node[anchor = north west] {$\rho_{2}$};
                    \filldraw[darkgray] (0,0,0) -- (-1.2,-1.2,-1.2) -- (-.8,-.8,0)--(0,0,2) --cycle;
                    \filldraw[gray] (0,0,0) -- (2,0,0) -- (0,0,2)--cycle;
                    \filldraw[gray] (0,0,0) -- (2,0,0) -- (0,0,-.555)--cycle;
                    \filldraw[darkgray] (0,0,0) -- (-1.2,-1.2,-1.2) -- (0,0,-.555)--cycle;
                    cycle;
                    \draw[->] (0,0,0) -- (2.1,0,0) node [anchor = south east] {$\rho_{3}$};
                    \draw[->] (0,0,0) -- (0,0,2.1) node[anchor = south] {$\rho_{1}$};
                    \draw[->] (0,0,0) -- (-1.25,-1.25,-1.25) node[anchor= north] {$\rho_{4}$};
                    \draw[->] (0,0,0) -- (0,0,-.8) node[anchor = north east] {$\rho_{234}$};
                    \fill (0,0,0) circle[radius=1pt];

										\node at (0,0,-1.6) {$\Sigma_{M^{\mathrm{br}}, \mathcal{G}_{\min}}$};
         \end{tikzpicture}
				\caption{ The lattice of flats of the \textit{broom matroid} 
						$M^{\mathrm{br}}$, and various Bergman fans.
						The bottom-most simplex 
						of $ \Sigma_{M^{\mathrm{br}}, \emptyset} $
						is filled in, so the 
						fan is not pure.} \label{fig:nonPureExample}
		\end{figure}
		
		\begin{example}\label{ex:nsComplex}
			For a loopless matroid $M$ on the ground set $ [n] $ with rank at least $ 2 $, 
			take $ \mathcal{P} = \emptyset $. 
			The Bergman fan $ \Sigma_{M, \emptyset} $ has rays
			$ \left\{ \rho_{i} : i \in [n] \right\} $.
			The rays $ \rho_{i_{1}}, \dots, \rho_{i_{k}} $ span a cone in
			$ \Sigma_{M, \emptyset} $ if and only if $ \closure(i_{1}, \dots,
			i_{k}) \neq [n] $.

			Therefore, $ \Sigma_{M, \emptyset} $ is a realization of the cone
			over the \emph{non-spanning complex of $M$}, $\Delta^{ns}_{M} $.
			This is the simplicial complex on $ [n] $ whose faces are the subsets of $[n]$
			whose closure is not $ [n] $.
			These fans are not necessarily pure (see Figure \ref{fig:nonPureExample}).
		\end{example}

		An important property of the family of 
		Bergman fans of order filters is that  they interpolate between 
		$ \Sigma_{M, \emptyset} $ and $ \Sigma_{M} $.
		Specifically, one can construct a sequence of order filters 
		$ \emptyset = \mathcal{P}_{0} \subset \mathcal{P}_{1}
		\subset \cdots \subset \mathcal{P}_{k} = \overline{\mathcal{L}}
		(M) $ such that consecutive order filters only differ by a single flat.
		Each corresponding modification of Bergman fans,
		$ \Sigma_{M, \mathcal{P}_{i}} \rightsquigarrow 
		\Sigma_{M, \mathcal{P}_{i+1}} $, is a \textbf{matroidal flip},
		and the flat $ Z = \mathcal{P}_{i+1} \setminus 
		\mathcal{P}_{i} $ is called the \emph{center} of the matroidal flip. These 
		were introduced in \cite{AHK} to inductively prove 
		the K\"{a}hler package for the Chow ring of a matroid, and we
		use these flips in Section \ref{section:flips}.

		\begin{example}
			Over the course of this section, we will use
			the \textit{broom matroid} $ M^{\mathrm{br}} $
			and uniform matroid $ U_{3,4} $ as our running examples.
			Some of their Bergman 
			fans are pictured in 
			Figures \ref{fig:nonPureExample} and \ref{figure:U34Picture}, respectively.

		\begin{figure}
				\tdplotsetmaincoords{86}{74}
				\begin{tikzpicture}[tdplot_main_coords, scale =1.7]
				  \filldraw[gray] (0,0,0) -- (0,2,0) -- (0,0,2) --cycle;
           \filldraw[gray] 
										(0,0,0) -- (0,0,2) -- (-1.2,-1.2,-1.2)--cycle;
						\filldraw[darkestgray] 
									(0,0,0) -- (-1.2,-1.2,-1.2) -- (0,2,0) -- cycle;
					\filldraw[darkgray] (0,0,0) -- (2,0,0) -- (0,2,0) --
                    cycle;
           \filldraw[darkgray] (0,0,0) -- (-1.2,-1.2,-1.2) -- (2,0,0) -- cycle;
           \draw[->] (0,0,0) -- (0,2.1,0) node[anchor = west] {$2$};
           \filldraw[lightgray] (0,0,0) -- (2,0,0) -- (0,0,2) --cycle;
					\draw[->] (0,0,0) -- (2.1,0,0) node[below right = 1.5pt] {$3$};
           \draw[->] (0,0,0) -- (0,0,2.1) node[anchor = south] {$1$};
           \draw[->] (0,0,0) -- (-1.25,-1.25,-1.25) node[anchor= north east] {$4$};
               \fill (0,0,0) circle[radius=1pt];
          \end{tikzpicture}
				\begin{tikzpicture}[tdplot_main_coords, scale =1.7]
					 \filldraw[gray] (0,0,0) -- (0,2,0) -- (0,0,2) --cycle;
           \filldraw[gray] 
										(0,0,0) -- (0,0,2) -- (-1.2,-1.2,-1.2)--cycle;
						\filldraw[darkestgray] 
									(0,0,0) -- (-1.2,-1.2,-1.2) -- (0,2,0) -- cycle;
           \filldraw[darkgray] (0,0,0) -- (2,0,0) -- (0,2,0) --
                    cycle;
					 \draw[->] (0,0,0) -- (-.9,-.9, 0) node[anchor = east]{$14$};
					 \draw[->] (0,0,0) -- (-.9, 0, -.9) node[anchor = north  west] {$24$};
           \filldraw[darkgray] (0,0,0) -- (-1.2,-1.2,-1.2) -- (2,0,0) -- cycle;
									\draw[->] (0,0,0) -- (0,-.9, -.9) node[anchor = north]{$34$};
           \draw[->] (0,0,0) -- (0,2.1,0) node[anchor = west] {$2$};
						\draw[->] (0,0,0) -- (0,1.15,1.15) node[anchor = south west] {$12$};
						\draw[->] (0,0,0) -- (1.15, 1.15, 0) node[anchor = north west] {$23$};
           \filldraw[lightgray] (0,0,0) -- (2,0,0) -- (0,0,2) --cycle;
					\draw[->] (0,0,0) -- (1.15, 0, 1.15) node[anchor = south] {$13$};
					\draw[->] (0,0,0) -- (2.1,0,0) node[below right =2pt] {$3$};
           \draw[->] (0,0,0) -- (0,0,2.1) node[anchor = south] {$1$};
           \draw[->] (0,0,0) -- (-1.25,-1.25,-1.25) node[anchor= north east] {$4$};
               \fill (0,0,0) circle[radius=1pt];
         \end{tikzpicture}
							\caption{
								The Bergman fan of the empty order filter
								and the Bergman fan for $ U_{3,4} $.
							}
							\label{figure:U34Picture}
			\end{figure}
		\end{example}

		\subsection{Bergman fans of building sets}
			Our second family of generalized Bergman fans come from 
			a choice of \textit{building set} on $ \mathcal{L}(M) $. 
			We begin with a quick review of building sets and their associated
			nested set complexes.

			For a subset $ \mathcal{G} $ of a lattice $ \mathcal{L} $
			and an element
			$ X \in \mathcal{L} $, let 
			$ \mathcal{G}_{\leq X} := \mathcal{G} \cap [\hat{0}, X]  $.
			The \emph{factors} $ f(\mathcal{G}\big|_{X}) $ of $X$ in $ \mathcal{G} $
			are the maximal elements of $ \mathcal{G}_{\leq X} $.

			\begin{definition}
				Let $ \mathcal{L} $ be a lattice. A subset $ \mathcal{G} 
				\subseteq \mathcal{L} \setminus \hat{0}$ is a \emph{building set} 
				on $ \mathcal{L} $ if for all 
				$ X \in \mathcal{L} $, the map
				\[ \prod_{Y \in f (\mathcal{G}\mid_{X})} 
						[\hat{0}, Y] \longrightarrow 
					[0,X], \;\;\;  
							(Z_{Y})_{Y} \longmapsto \bigvee_{Y} Z_{Y}\]
				is an isomorphism of lattices.
			\end{definition}

			Building sets define a simplicial complex of \textit{nested sets}.

			\begin{definition}
				Let $ \mathcal{G} $ be a building set on the lattice $ \mathcal{L} $. 
			  A subset $ \mathcal{S} \subseteq \mathcal{G} $ is \emph{nested} 
				if the 
				following condition holds: for any set $\left\{ S_{1}, \dots, S_{k}
				\right\} $ of pairwise incomparable elements of $\mathcal{S} $ 
				with $ k \geq 2 $,
				the join  $ \bigvee_{i=1}^{k} N $ is not in $ \mathcal{G} $.

			The collection of nested sets forms a 
			simplicial complex denoted $ \mathcal{N}(\mathcal{L}, \mathcal{G}) $. 
			\end{definition}

			We now specialize	to the case of the lattice of flats of a matroid. 
			The \textit{Bergman fan of a building set} $ \mathcal{G} $ on
			$\mathcal{L}(M) $ is defined as a geometric realization of the cone over
			$ \mathcal{N}(\mathcal{L}(M), \mathcal{G}) $.

			\begin{definition}
				Let $ M $ be a loopless matroid on the ground set $[n]$, and
				let $ \mathcal{G} $ be a building set on $ \mathcal{L}(M) $ containing
				$[n]$. The \emph{non-reduced Bergman fan} of $ \mathcal{G} $ is the
				rational fan 
				\[ 
					\widehat{\Sigma}_{M, \mathcal{G}}  = \left\{ \pos(e_{F_{1}}, \dots, 
							e_{F_{k}}) :  \left\{F_{1}, \dots, F_{k} \right\} \in
							\mathcal{N}(\mathcal{L}(M), \mathcal{G}) \right\}.
				\]
				These fans are unimodular by \cite{FY}*{Proposition 2}.
			\end{definition}

			The fan $ \widehat{\Sigma}_{M, \mathcal{G}} $ again has lineality space
			$ \mathbb{Z}\cdot e_{[n]} $, as $ [n] \in \mathcal{G} $.

			\begin{definition}
				Let $M$ be a loopless matroid on the ground set $[n] $ and
				$ \mathcal{G} $ a building set on $ \mathcal{L}(M) $ 
				containing $[n]$. The \emph{Bergman fan} of $ \mathcal{G} $,
				written $ \Sigma_{M, \mathcal{G}} $, is
				the quotient fan of $ \widehat{\Sigma}_{M, \mathcal{G}} $ in
							$ \mathbb{Z}^{n} / \mathbb{Z} \cdot e_{[n]}$.
			\end{definition}

			As with order filters, we give special attention to 
			the Bergman fans associated to the maximal and minimal building sets.

			\begin{example}
				For any lattice $ \mathcal{L} $, the maximal building set is
				$ \mathcal{G}_{\max} = \mathcal{L} \setminus \hat{0} $. 
				Here, the nested sets are precisely chains in $ \mathcal{L} \setminus
				\hat{0} $. For matroids, this means that $ \Sigma_{M, \mathcal{G}_{\max}} $
				is the Bergman fan $ \Sigma_{M} $.
			\end{example}

			\begin{example}
				For any lattice $ \mathcal{L} $, there is a minimal building set
				$ \mathcal{G}_{\min} $ which contains $[n]$. 
				It consists of $[n]$ along with all \textit{irreducible}
				elements of $ \mathcal{L} \setminus \hat{0} $.
				These are the elements
				$X$ such that $[\hat{0}, X] $ cannot be written as the product of
				intervals of the form $ [\hat{0},Y] $ with $ Y < X $.
			 \end{example}

			 Our running examples of $ M^{\mathrm{br}} $ and
			 $ U_{3,4} $ show that order filters and building sets
			 define distinct classes of generalized Bergman fans.

			 \begin{example}
				For the broom matroid $M^{\mathrm{br}}$ of Figure \ref{fig:nonPureExample}, 
				the minimal building
				set on $ \mathcal{L}(M^{\mathrm{br}}) $ 
				is $ \left\{1,2, 3, 4, 234, 1234 \right\} $, which is not an order
				filter. On the other hand, $ \Sigma_{M^{\mathrm{br}}, \emptyset} $ is not 
				the Bergman fan of any building set on $ \mathcal{L}(M^{\mathrm{br}}) $,
				as it does not have a ray corresponding to the irreducible element $ 234 $.

				For the uniform matroid $U_{3,4} $, the minimal building set on
				$ \mathcal{L}(U_{3,4}) $ consists of the ground set along with
				all singletons. This shows that $ \Sigma_{U_{3,4}, \emptyset} 
				= \Sigma_{U_{3,4}, \mathcal{G}_{\min}} $. In fact,
				$ \Sigma_{M, \emptyset} = \Sigma_{M, \mathcal{G}_{\min}} $ if and only
				if $ M$ is a uniform matroid.
			 \end{example}

	\subsection{Toric varieties}
		See \cite{CLS} for a general
		reference in toric geometry.

		Let $N$ be a finite-dimensional integer lattice, and define the rational
		vector space
		$N_{\mathbb{Q}} := N \otimes_{\mathbb{Z}} 
		\mathbb{Q} $. We denote the dual of $N$ by
		$ N^{\vee} $ and define
		$ N^{\vee}_{\mathbb{Q}} := N^{\vee} \otimes_{\mathbb{Z}} \mathbb{Q} $.
		We write $ \langle -,- \rangle $ for the pairing
		$ N_{\mathbb{Q}}^{\vee} \otimes N_{\mathbb{Q}} \to \mathbb{Q} $.

		Each rational fan $ \Sigma \subseteq N_{\mathbb{Q}} $ defines
		a toric variety (over $ \mathbb{C} $) denoted by 
		$ X_{\Sigma} $. 
		We assume that all toric varieties are smooth, and equivalently, 
		all rational fans are unimodular.
		The lattices $N$ and $ N^{\vee} $ are the
		\emph{cocharacter} and \emph{character lattices} of $ X_{\Sigma}$, respectively.
		Let $ \Sigma(1) $ be the set of rays of $ \Sigma $. 
		For $ \rho \in \Sigma(1) $, we take $ u_{\rho} \in N $ to be the minimal
		generator of $ \rho \cap N $.
		Given a subset of cones
		$ \left\{ \sigma_{1}, \dots, \sigma_{k} \right\} \subseteq \Sigma  $,
		we write $ \pos(\sigma_{1}, \dots, \sigma_{k}) $ for the 
		(abstract) cone spanned by the $ \sigma_{i} $---it may or may not be a cone
		of $ \Sigma $.

		Each cone $ \tau \in \Sigma $ defines distinguished, torus-invariant open
		and closed subvarieties in $ X_{\Sigma} $.
		These are determined by the \emph{closed star} and \emph{star} of $ \tau $,
		respectively.

		\begin{definition}\label{def:star}
			The \emph{closed star} of $ \tau $ is the subfan
				\[ 
					\overline{\Star}(\tau) := \left\{ \sigma \in \Sigma :
						\pos(\tau, \sigma) \in \Sigma \right\}.
				\]
			The toric variety $ X_{\overline{\Star}(\tau)} $ is an open subset
			of $ X_{\Sigma} $. In general, it strictly contains the affine open
			$ U_{\tau} $. 

			The \emph{star} $ \Star(\tau) $ of $ \tau $	is the image of $ \overline{\Star}(\tau)
			$ in the quotient lattice $ N / \spann(\tau) $. It is 
			also a unimodular fan. The toric variety $ X_{\Star(\tau)} = V(\tau) $
			is a closed subvariety of $ X_{\Sigma} $. When $ \tau $ is a ray,
			$ X_{\Star(\tau)} $ is the torus-invariant divisor associated to $ \tau $.
		\end{definition}

		\subsection{Singular cohomology of smooth toric varieties}
		Throughout this paper, we will work with singular cohomology with
		rational coefficients, so $ H^{\bullet}(X_{\Sigma}) =
		H^{\bullet}(X_{\Sigma}, \mathbb{Q}) $.		

		Like many invariants of smooth toric varieties, the singular cohomology ring
		can be read off of the fan $ \Sigma $.
		As a word of caution,
		the singular cohomology ring  is determined not only by
		the combinatorics of $ \Sigma $---its structure as an abstract simplicial fan---but 
		also by its embedding
		in $ N_{\mathbb{Q}} $. Here, we recall a presentation for
		$H^{\bullet}(X_{\Sigma}) $
		in terms of the Stanley--Reisner ring of $ \Sigma $. 

		\begin{definition}
			For a simplicial fan $ \Sigma $, 
			the \textbf{Stanley--Reisner ring}
			is
				\[ 
					 \mathbb{Q}[\Sigma] := \frac{\mathbb{Q}[x_{\rho} :
							\rho \in \Sigma(1)]}{
											( x_{\rho_{1}} \cdots x_{\rho_{s}}  
											: \textrm{ $ \pos(\rho_{1}, \dots,  \rho_{s}) 
											\notin \Sigma $}).}
				\]
			The ideal $ (x_{\rho_{1}} \cdots x_{\rho_{s}} : \pos(\rho_{1}, \dots, \rho_{s}) 
			\notin \Sigma ) $ is called the \emph{Stanley--Reisner ideal}.
		\end{definition}

		When $ \Sigma \subseteq N_{\mathbb{Q}} $ is moreover a rational fan, the ring
		$ \mathbb{Q}[\Sigma] $ naturally has the structure of a
		$ \Sym(N_{\mathbb{Q}}^{\vee}) $-algebra, induced by
		the map
		\[ N_{\mathbb{Q}}^{\vee} \longrightarrow  \mathbb{Q}[\Sigma], \;\;\;  
				m \longmapsto \sum_{\rho \in \Sigma(1)}
						\langle m, u_{\rho} \rangle \cdot x_{\rho}.\]

		\begin{thm}[{\cite{franzRing}*{Theorem 1.2}}] \label{thm:franzRing}
			For a smooth toric variety $ X_{\Sigma} $, there is an
			isomorphism of rings
			\[
				 H^{\bullet}(X_{\Sigma}) \cong 
							\Tor_{\bullet}^{\Sym(N_{\mathbb{Q}}^{\vee})}
							(\mathbb{Q}[\Sigma], \mathbb{Q})_{\bullet},
			\]
			where $ \mathbb{Q} $ is the residue field of the maximal homogeneous ideal.
		\end{thm}

		The cohomology of $ X_{\Sigma} $ has additional structure coming  
		from the mixed Hodge structure \cite{deligne}. Specifically,
		there is an increasing \emph{weight filtration}
		\[ 
			0 = W_{i-1} H^{i}(X_{\Sigma}) \subseteq
			\cdots \subseteq W_{2i-1} H^{i}(X_{\Sigma}) 
			\subseteq W_{2i} H^{i}(X_{\Sigma}) = 
			H^{i}(X_{\Sigma}).
		\]
		Totaro \cite{totaro}*{Theorem 5} noted that this filtration splits in the case of
		toric varieties, and Weber \cite{weberWeights}*{Theorem 5.4} 
		showed that the splitting
		matches the bigrading of the Tor algebra:
			\[ 
					\Gr^{W}_{2j} H^{2j-i}(X_{\Sigma})
					\cong \Tor_{i}^{\Sym(N_{\mathbb{Q}}^{\vee})}(\mathbb{Q}[\Sigma],
					\mathbb{Q})_{j}.%
			\]
		Each $ \dim \Gr_{j}^{W} H^{k}(X_{\Sigma}) $ is called a \emph{Hodge number}.
	
		\begin{definition}\label{def:hpPolynomial}
		The \emph{Hodge--Poincar\'{e} polynomial} of $ X_{\Sigma} $
		is
		\[ 
			 \HPoin\left( X_{\Sigma} \right) := 
			 \sum_{i,j} \dim \Gr^{W}_{2j} H^{2j-i}(X_{\Sigma}, \mathbb{Q}) \cdot w^{i}x^{j}
			 = \sum_{i,j} \dim \Tor_{i}^{\Sym(N_{\mathbb{Q}}^{\vee})}
			 (\mathbb{Q}[\Sigma], \mathbb{Q})_{j} \cdot w^{i}x^{j}.
		\]
		\end{definition}

		\begin{rmk}
			Our chosen indexing is so that if we specialize $\HPoin\left( X_{\Sigma} \right) $ to $ w= 0 $, 
			we recover the Hilbert series of the Chow ring
			of $ X_{\Sigma} $ (see Example \ref{ex:chow} for a proof).
		\end{rmk}

		 Theorem \ref{thm:franzRing} allows us to use tools from commutative
		algebra to compute the cohomology of smooth toric varieties. For example,
		computer algebra software like Macaulay2 \cite{M2} can compute the Hodge 
		numbers of $ X_{\Sigma} $ by calculating the graded Betti numbers of 
		$ \mathbb{Q}[\Sigma] $ over $ \Sym(N_{\mathbb{Q}}^{\vee}) $.
		Another consequence of the theorem is that we can express the Hodge--Poincar\'{e} polynomial 
		of a toric subvariety of affine space  via Hochster's formula 
		as follows.

		Given an abstract simplicial complex $ \Delta $ on vertices 
		$ [n] $, there is a \emph{standard geometric realization}
		of $ \Delta $ in $ \mathbb{Q}^{n} $. It sends the vertex $ i $ to 
		$ e_{i} $. The cone over $ \Delta $ (based at 
		the origin) is a unimodular fan $ \Sigma $, and  the variety
		$ X_{\Sigma} $ is a toric subvariety of affine space. Any
		toric subvariety of affine space arises in this way.
		The Hodge numbers of $ X_{\Sigma} $ are determined by the reduced simplicial
		cohomology of $ \Delta $ and all its restrictions.

		\begin{lem}\label{lem:toricAffine}
			Suppose that $ \Sigma $ is the cone over the standard geometric realization
			of a simplicial complex $ \Delta $ on vertices $[n]$. Then,
			\[ 
					\HPoin\left(X_{\Sigma}\right) = \sum_{i,j} 
					\sum_{\substack{W \subseteq [n] \\ \left| W \right| = j}}
					\dim \widetilde{H}^{j-i-1}(\Delta\big|_{W}) \cdot  w^{i}x^{j}.
			\]
			Here, $ \Delta \big|_{W} $ denotes the restriction of $ \Delta $ to the
			vertices in $W$.
		\end{lem}

		\begin{proof}
			Let $ N^{\vee} $ be the character lattice of $ X_{\Sigma} $. Then,
			$ \Sym(N_{\mathbb{Q}}^{\vee}) \cong \mathbb{Q}[x_{1}, \dots, x_{n}] $,
			and the structure of $ \mathbb{Q}[\Sigma]\cong \mathbb{Q}[\Delta] $ as a 
			$ \Sym(N_{\mathbb{Q}}^{\vee}) $-algebra is given by the map 
			$ x_{i} \mapsto x_{i} $. Therefore, Hochster's formula applies
			(\cite{millerSturmfels}*{Theorem 5.12}), and
			\[ 
					\dim \Tor_{i}^{\Sym(N_{\mathbb{Q}}^{\vee})} (\mathbb{Q}[\Sigma],
						\mathbb{Q})_{j} = \dim \sum_{\substack{W \subseteq [n] \\ 
						\left| W \right| = j}} \widetilde{H}^{j-i-1}(\Delta \big|_{W}).  \qedhere
			\]
		\end{proof}

	\subsection{Koszul homology}\label{subsection:koszul}
		We now give a more explicit and combinatorial 
		presentation of $H^{\bullet}(X_{\Sigma}) $ in terms
		of the Tor algebra and Koszul homology. See \cite{brunsHerzog} for
		a standard reference on Koszul homology.

		For a graded ring $ R $, let $ R_{i} $ be the module of homogeneous elements
		of degree $i$.

		\begin{definition}
			Let $ \Sigma \subseteq N_{\mathbb{Q}} $ be a unimodular fan.
			The \textbf{Koszul complex}	
			$ K_{\bullet}(\mathbb{Q}[\Sigma]) $ is the bigraded differential algebra
			\[ 
						K_{i}(\mathbb{Q}[\Sigma])_{j} := \mathbb{Q}[\Sigma]_{j-i}
						\otimes_{\mathbb{Q}} \bigwedge^{i} N_{\mathbb{Q}}^{\vee}
			\]
			with differentials $d_{i}\colon K_{i}(\mathbb{Q}[\Sigma])_{j}\to  
			K_{i-1}(\mathbb{Q}[\Sigma])_{j} $   defined by
			\[ f \otimes m_{1} \wedge \cdots \wedge m_{i}
							\longmapsto \sum_{k=1}^{i} (-1)^{k+1} (m_{k}\cdot f) \otimes 
							m_{1} \wedge \cdots \wedge \widehat{m}_{k} \wedge 
							\cdots \wedge m_{i}.\]
			The multiplication map is given by
			\[ 
						(f \otimes \zeta) \cdot  (g \otimes \xi) \mapsto f \cdot g \otimes
						\zeta \wedge \xi,
			\]
			which is graded commutative in the index $ i $. 
		\end{definition}		

		\begin{rmk}
			The standard construction of the Koszul complex starts with
			a choice of regular sequence $ \mathbf{f} = (x_{1}, \dots, 
			x_{s}) $ of $ \Sym(N_{\mathbb{Q}}^{\vee}) $ to produce the Koszul 
			complex  $ K_{\bullet}(\mathbf{f}, \mathbb{Q}[\Sigma]) $. 
			Our construction makes no such choice, but
			it is isomorphic to the Koszul complex associated
			to  any maximal regular sequence of elements of 
			$ \Sym^{1}(N_{\mathbb{Q}}^{\vee}) $.
		\end{rmk} 

			Taking homology of the Koszul complex yields the 
			\textbf{Koszul homology} groups
			\[ 
				    H_{i}(K(\mathbb{Q}[\Sigma]))_{j} \cong
						\Tor_{i}^{\Sym(N_{\mathbb{Q}}^{\vee})}
						(\mathbb{Q}[\Sigma], \mathbb{Q})_{j} \cong
						\Gr_{2j} H^{2j-i}(X_{\Sigma}).
			\]
		This induces an isomorphisms of rings $ H_{\bullet}(K(\mathbb{Q}[\Sigma]))
		\cong H^{\bullet}(X_{\Sigma}) $, where the multiplication
		in $ H_{\bullet}(K(\mathbb{Q}[\Sigma])) $ is induced by the
		multiplication in the Koszul complex. Moreover, this isomorphism 
		is natural with respect to toric morphisms which we now
		describe.

		Suppose that $ \Sigma \subseteq N_{\mathbb{Q}} $ and $ \Sigma' \subseteq
		N'_{\mathbb{Q}} $ are rational unimodular fans, and suppose that 
		$ \phi\colon N \to N' $ is a map of integral lattices such that
		for every $ \sigma \in \Sigma $ there exists $ \sigma' \in \Sigma $ such 
		that $ \phi(\sigma) \subseteq \sigma' $. Then, $ \phi $ defines a 
		\emph{toric morphism} $ \phi\colon X_{\Sigma} \to X_{\Sigma'} $.

		There are two ways to get a map on cohomology via $ \phi $. 
		The first is the pullback map $ \phi^{*}\colon H^{\bullet}(X_{\Sigma'}) 
		\to H^{\bullet}(X_{\Sigma}) $. The second is by Koszul complexes.
		We can view $ \mathbb{Q}[\Sigma] $ as the algebra of piecewise polynomial
		functions on the support of $ \Sigma $
		(\cite{brion}*{\S 1.3}), and pulling back these functions along
		$ \phi\colon N_{\mathbb{Q}} \to N'_{\mathbb{Q}} $ induces a ring map
		$ \mathbb{Q}[\Sigma'] \to \mathbb{Q}[\Sigma] $. Dualizing $ \phi $ induces
		a map $ \phi^{*}\colon {N'}_{\mathbb{Q}}^{\vee} \to N_{\mathbb{Q}}^{\vee} $. 
		One verifies that the map $ \phi\otimes \phi^{*}\colon
		K_{\bullet}(\mathbb{Q}[\Sigma']) \to K_{\bullet}(\mathbb{Q}[\Sigma]) $ 
		respects differentials and we produces a map
		$ \phi \otimes \phi^{*}\colon H_{\bullet}(K(\mathbb{Q}[\Sigma']))
		\to H_{\bullet}(K(\mathbb{Q}[\Sigma])) $. 
		The two maps on cohomology yield a diagram
		\begin{center}
			\begin{tikzcd}
				H^{\bullet}(X_{\Sigma'}) \rar["\phi^{*}"] & H^{\bullet}(X_{\Sigma}) \\
				H_{\bullet}(K(\mathbb{Q}[\Sigma'])) \uar["\cong"] \rar["\phi \otimes
							\phi^{*}"] & H_{\bullet}(K(\mathbb{Q}[\Sigma])),  \uar["\cong"]
			\end{tikzcd}
		\end{center}
		and the following result of Franz--Fu shows that this square commutes.

		\begin{thm}[\cite{franzFu}*{Theorem 1.2}]
						\label{thm:franzFu}
						The isomorphism $ H_{\bullet}(K(\mathbb{Q}[\Sigma])) \cong
						H^{\bullet}(X_{\Sigma}) $ is natural with respect to 
						toric morphisms.
		\end{thm}

		\begin{example}\label{ex:chow}
			We use Koszul homology to describe the isomorphism (\cite{totaro}*{Theorem 6}) between 
			the Chow cohomology
			of $ X_{\Sigma} $ and the cohomology of Hodge--Tate type 
			$ A^{k}(X_{\Sigma}) \cong \Gr_{2k}^{W} H^{2k}(X_{\Sigma}) $.

			Note that $ \Gr_{2k}^{W} H^{2k}(X_{\Sigma}) \cong
			H_{0}(\mathbb{Q}[\Sigma])_{k} $. Every element of $ \mathbb{Q}[\Sigma] $ 
			is a cycle in the Koszul complex, and the boundaries are the ideal
			\[ J = 
				\left( \sum_{\rho \in \Sigma(1)} \langle m, u_{\rho} \rangle \cdot
						x_{\rho} : m \in N^{\vee}_{\mathbb{Q}} \right). \]
			Thus, $ \Gr_{2k}^{W}H^{2k}(X_{\Sigma}) \cong 
				H_{0}(\mathbb{Q}[\Sigma])_{k} \cong \mathbb{Q}[\Sigma]_{k} / J $.
			The term on the right is the well-known presentation for
			$ A^{k}(X_{\Sigma}) $ as in  \eqref{eq:FYPresentation}.

			In particular, the singular cohomology ring of a smooth toric variety contains
			the Chow ring, and the specialization of $ \HPoin(X_{\Sigma}) $ at $ w = 0 $
			is the Hilbert series of $A^{\bullet}(X_{\Sigma})$.
		\end{example}

		\section{The singular cohomology ring of a matroid}\label{section:singCohomology}

		In this section, we define the \textit{singular cohomology ring} of a matroid and
		compute some examples to illustrate Theorem \ref{thm:vanishingCohomology}.

		\begin{definition}
			For a loopless matroid $M$, the \emph{singular cohomology ring} of
			$M$ is the ring $ H^{\bullet}(X_{\Sigma_{M}}) $.
		\end{definition}

		By Example \ref{ex:chow}, the singular cohomology ring contains the Chow ring of the matroid,
		with $ A^{k}(M) \cong \Gr_{2k}^{W} H^{2k}(\Sigma_{M}) $ under the weight filtration.
		In fact, it is a consequence of our main result, Theorem \ref{thm:vanishingCohomology}, 
		that this containment is strict if and only if $  M$ is not a boolean matroid.

		\begin{example} \label{ex:exampleHP}
			We use Macaulay2  and the Matroids package
			\cite{matroidsM2} to find the Hodge numbers of the singular cohomology rings of the two
			examples matroids from Section \ref{section:background}.
			\[ 
				\HPoin\left(X_{\Sigma_{M^{\mathrm{br}}}}\right)
						= 1 + 5x + x^{2} + 5wx^{2} + 2 wx^{3}; \;\;\;
				\HPoin\left(X_{\Sigma_{U_{3,4}}}\right) = 
					1 + 7x +  x^{2} + 6wx^{2} + 3 wx^{3}.
			\]

			These Hodge--Poincar\'{e} polynomials illustrate Theorem
			\ref{thm:vanishingCohomology}. The highest-degree term of 
			$ \HPoin(X_{\Sigma_{M^{\mathrm{br}}}}) $ is $ 2 wx^{3} $. The exponent says 
			that the top-weight cohomology is $ \Gr_{6}^{W}H^{5}(X_{\Sigma_{M^{\mathrm{br}}}}) $,
			while the coefficient says the dimension is $ 2 $. This is the M\"{o}bius invariant
			of $ M^{\mathrm{br}} $, as $ \NBC(M^{\mathrm{br}}) = \left\{ 123, 124 \right\} $.
			Similarly, the top-weight cohomology for $ U_{3,4} $ is
			$\Gr_{6}^{W}H^{5}(X_{\Sigma_{U_{3,4}}}) $ with dimension $ 3 $. Note here that
			$ \NBC(U_{3,4}) = \left\{ 123, 124, 134 \right\} $.
		\end{example}

			We can also use Koszul homology to compute the singular cohomology ring of a matroid
			by hand.  To simplify notation, we will write $ x_{F} $ instead of 
			$ x_{\rho_{F}} \in \mathbb{Q}[\Sigma_{M}] $. We take generators
			$ \left\{ \ell_{i} - \ell_{j} := e^{*}_{i} - e^{*}_{j} : 
			i, j \in [n] \right\} $ for $ N_{\mathbb{Q}}^{\vee} $, so that the Koszul differential is
			described by
			\[ 
				d (\ell_{i} - \ell_{j}) = \sum_{i \in F} x_{F} - \sum_{j \in G} x_{G}.
			\]
			
		\begin{example}\label{ex:U23}
			We compute a basis for the singular cohomology of 
			$U_{2,3} $ (see Figure \ref{fig:U23} for a picture of its Bergman fan).
			Instead of using the infinite dimensional Koszul complex, we will pass to the quasi-isomorphic
			\textit{square-free Koszul complex} whose elements and differentials are given in
			Table \ref{table:koszul} (see Definition \ref{def:squareFreeKoszul} and 
			Lemma \ref{lem:squareFreeKoszul}).

			\begin{figure}
				\begin{tikzpicture}
								\foreach \x in {-2,-1,0,1,2} \foreach \y in {-2,-1,0,1,2}
								{\filldraw[color = gray] (\x,\y) circle (.3pt);}
								\draw[->] (0,0) -- (2,0) node[anchor = west]{$\rho_{1}$};
								\draw[->] (0,0) -- (0,2) node[anchor = south] {$\rho_{2}$};
								\draw[->] (0,0) -- (-1.4, -1.4) node[anchor = north east]
								{$\rho_{3}$};
				\end{tikzpicture}
							\caption{The Bergman fan $ \Sigma_{U_{2,3}} $.}
				\label{fig:U23}
			\end{figure}			

			\begin{table}[b]
			\begin{tabular}{c|c|c|c|c}
							$\alpha $ & $ 1$ & $ (\ell_{1} - \ell_{2}) $ & $ (\ell_{2} - \ell_{3}) $ 
							& $ (\ell_{1} - \ell_{2}) \wedge (\ell_{2} - \ell_{3}) $ \\ \hline
					$d \alpha $ & $ 0 $ & $ x_{1} - x_{2} $ 
							& $ x_{2} - x_{3} $ & $ x_{1} 
							\otimes (\ell_{2} - \ell_{3}) - x_{2} \otimes (\ell_{1} - \ell_{3}) 
							+ x_{3} \otimes (\ell_{1} - \ell_{2}) $
			\end{tabular}

			\vspace*{1em}

			\begin{tabular}{c|c|c|c|c|c|c}
							$ \alpha $ & $ x_{1} $ & $ x_{1} \otimes 
							(\ell_{2} - \ell_{3}) $ & $ x_{2} $
							& $ x_{2} \otimes (\ell_{1} - \ell_{3}) $ &
							$ x_{3} \otimes 1 $ & $ x_{3} \otimes (\ell_{1} - \ell_{2})  $ \\
				\hline
				$ d \alpha $ & $ 0 $ & $0 $ & $0$ & $ 0$ & $0$ & $0$ 
			\end{tabular}		
				\vspace*{1em}
							\caption{The square-free Koszul complex for $U_{2,3} $.} \label{table:koszul}
			\end{table}

		From the table, we see that $ H_{\bullet}(K(\mathbb{Q}[\Sigma_{U_{2,3}}])) $ has basis
			$ 1 $, $x_{3} $, $x_{2} \otimes (\ell_{1} - \ell_{3}) $,
			and $ x_{\rho_{3}} \otimes (\ell_{1} - \ell_{2}) $. Therefore,
			$ \HPoin(X_{\Sigma_{U_{2,3}}}) = 1 + x + 2wx^{2} $.
		\end{example}

		\begin{example}\label{ex:U34}
			A similar, but longer, computation shows that 
			$H_{\bullet}(K(\mathbb{Q}[\Sigma_{U_{3,4}}])) $ has basis
				\begin{gather*}	
						1, x_{4}, x_{12}, x_{13}, x_{14}, 
						x_{23}, x_{24}, x_{34},
						x_{4}x_{34}, \\
						x_{12} \otimes (\ell_{3} -\ell_{4}), 
						x_{13} \otimes (\ell_{2} - \ell_{4}), 
						x_{14} \otimes (\ell_{2} - \ell_{3}), \\
						x_{23} \otimes (\ell_{1} - \ell_{4}), 
						x_{24} \otimes (\ell_{1} - \ell_{3}),
						x_{34} \otimes (\ell_{1} -\ell_{2}), \\
						x_{3} x_{23} \otimes (\ell_{1} - \ell_{4}),
						x_{4} x_{24} \otimes (\ell_{1} - \ell_{3}),
						x_{4} x_{34} \otimes (\ell_{1} - \ell_{2}).
				\end{gather*} 

			As in Example \ref{ex:exampleHP}, we see that $ \HPoin(X_{\Sigma_{U_{3,4}}}) =
			1 + 7x + x^{2} + 6wx^{2} + 3wx^{3} $.
		\end{example}

		Finally, we show that	the singular cohomology ring is sensitive to parallel elements,
		in distinction to the Chow ring of a matroid.

		\begin{example}
		Define the matroid $M$ on the ground set $[5]$ such that $ M\big|_{[4]}=
		U_{2,4} $ and $ 1 $ and $ 5 $ are parallel elements. Then,
		$ U_{2,4} $ is the simplification of $ M$.
		A direct computation using Macaulay2 shows that the singular cohomology rings of
		these matroids are distinct:
		\[ 
			\HPoin\left( X_{\Sigma_{M}} \right) = 
			1  + wx + x + wx^{2} +  wx^{2} \left(2(1 +wx) + 3 (1+wx)^{2} \right), \]
		while
		\[ 
			\HPoin\left( X_{\Sigma_{U_{2,4}}} \right) =
			1 + x + wx^{2} \left(2 + 3(1+wx)\right).
		\] 
			\end{example}
		In this example, note that 
		$ \HPoin( X_{\Sigma_{M}} ) =  (1+ wx) \cdot \HPoin ( X_{\Sigma_{U_{2,4}}}) $.
		This illustrates a more general phenomenon. 

		\begin{prop}\label{prop:parallelEdges}
		Let $M$ be a loopless 
		matroid and $ M^{s} $ its simplification. Let $ p $ be the difference
		in the size of the ground sets of $M$ and $ M^{s} $. Then,
		\[ 
				\HPoin\left(X_{\Sigma_{M}}\right) = (1 + wx)^{p}\cdot \HPoin\left(X_{\Sigma_{M^{s}}}\right).
		\]
		\end{prop}

		\begin{proof}
			The fan  $ \widehat{\Sigma}_{M} \subseteq N_{\mathbb{Q}} $ 
			sits in the subspace spanned by $ e_{i} - e_{j} $ for parallel elements $ i,j $,
			which has dimension $p$.
			Quotienting out by this subspace yields a rational fan isomorphic 
						to $ \widehat{\Sigma}_{M^{s}} $.
			Therefore, $ X_{\widehat{\Sigma}_{M}} \cong X_{\widehat{\Sigma}_{M^{s}}} \times
					(\mathbb{C}^{*})^{p} $, and
			$ X_{\Sigma_{M}} \cong X_{\Sigma_{M^{s}}} \times (\mathbb{C}^{*})^{p} $.
			The result follows from the K\"{u}nneth formula, as
		  $ \HPoin((\mathbb{C}^{*})^{p}) = (1 + wx)^{p} $, 		
		\end{proof}

		The rest of this paper is devoted to understanding the singular cohomology ring of a 
		matroid in terms of its combinatorics. 
		Before working with the variety $ X_{\Sigma_{M}} $ directly, we introduce two simpler varieties,
		$ X_{\widehat{\Sigma}_{M, \emptyset} \setminus \rho_{[n]}} $ and $ X_{\Sigma_{M, \emptyset}} $,
		whose cohomology we compute in terms of commutative algebra. 

		\section{\texorpdfstring{$\HPoin\left(X_{\widehat{\Sigma}_{M, \emptyset}
		\setminus \rho_{[n]}}
		\right)$}{The Hodge--Poincar\'{e} polynomial of the lifted Bergman fan of the empty order filter}
	via Hochster's formula}\label{section:liftEmpty}

		In this section, we compute
		the Hodge--Poincar\'{e} polynomial of $ X_{\widehat{\Sigma}_{M, \emptyset}
		\setminus \rho_{[n]} } $ via Lemma \ref{lem:toricAffine}, which is Hochster's formula 
		applied to the cohomology of toric subvarieties of affine space.
		
		Let $ \widehat{\Sigma}_{M, \emptyset} \setminus \rho_{[n]} $ be the fan
		obtained by removing the ray $ \rho_{[n]} $ from $ \widehat{\Sigma}_{M, \emptyset} $.
		From Example \ref{ex:nsComplex}, the non-spanning complex $ \Delta^{ns}_{M} $ is the simplicial 
		complex on vertex set $[n]$ whose faces are subsets $ S \subseteq [n] $ such that 
		$ \closure(S) \neq [n] $.
		The fan $ \widehat{\Sigma}_{M, \emptyset} \setminus \rho_{[n]} $ is
		the cone over the standard geometric realization of $ \Delta^{ns}_{M} $,
		so $ X_{\widehat{\Sigma}_{M, \emptyset} \setminus \rho_{[n]}} $ is a toric 
		subvariety of affine space.
		Thus, Lemma \ref{lem:toricAffine} applies, and computing the Hodge numbers	
		of $ X_{\widehat{\Sigma}_{M, \emptyset}
		\setminus \rho_{[n]}} $ is equivalent to computing the reduced simplicial
		cohomology of all the restrictions of $ \Delta_{M}^{ns} $. 

		We begin this computation by finding the
		reduced Betti numbers of $ \Delta_{M}^{ns} $ itself.
		 
		\begin{lem}\label{lem:cohomologyNSComplex}
					Let $ M$ be a matroid of rank $ r $ on the ground set $[n]$. Then,
			\[ 
					\dim \widetilde{H}^{i}(\Delta_{M}^{ns})
					= \begin{cases}
							\left| \NBC(M) \right| & i = r-2 \\
							0 & i \neq r-2.
						\end{cases} 			\]
	\end{lem}

	\begin{proof}
		The Alexander dual of $ \Delta_{M}^{ns} $ is 
		the independence complex $ \IN(M^{*}) $
		of the dual matroid \cite{bettiNumbersFacetIdeal}*{Proposition 1}.
		Bj\"{o}rner
		\cite{bjorner}*{Theorem 7.8.1} computes the Betti numbers of this complex as		\[ 
			 \dim \widetilde{H}_{i}(\IN(M^{*})) =
				\begin{cases}
						\left| \NBC(M) \right| & i = n-r-1 \\
						0 & i \neq n-r-1.
				\end{cases}
		\]
		By  combinatorial Alexander duality \cite{alexanderDuality}*{Theorem 1.1},
		$	\widetilde{H}_{i}(\IN(M^{*})) = \widetilde{H}^{n-i-3}(\Delta_{M}^{ns}) $, and
		the statement follows.
	\end{proof}

	The reduced cohomology for restrictions of $ \Delta_{M}^{ns} $ is similar.

	\begin{lem}\label{lem:restrictionsNS}
		Let $M$ be a matroid of rank $ r $ on the ground set $[n]$, and
		let $ W \subseteq [n] $. 
					\begin{enumerate}
						\item If $ W  = \emptyset $, then
									\[ 
											\dim \widetilde{H}^{s}\left(\Delta_{M}^{ns}\big|_{W}
													\right) = \begin{cases} 1 & s = -1 \\ 0 & s \neq -1.
													\end{cases}
									\]
						\item If $W $ is non-empty and $ \rk(W) < r $, then 
									\[ 
													\widetilde{H}^{\bullet}\left(\Delta_{M}^{ns}\big|_{W}
													\right) = 0.
													\]

						\item If $W$ is non-empty and $ \rk(W) = r $, then
									\[ 
										\dim \widetilde{H}^{s}\left(\Delta_{M}^{ns}\big|_{W}
													\right) = \begin{cases}
														\left|\NBC\left(M\big|_{W}\right) \right| & s = r-2 \\
														0 & s \neq r-2.
													\end{cases}
									\]
							\end{enumerate}
	\end{lem}

	\begin{proof}The proof comes down to identifying each restriction
					$ \Delta_{M}^{ns}\big|_{W} $.
		\begin{enumerate}
			\item If $W = \emptyset $, then $ \Delta_{M}^{ns}\big|_{W} $
						is the empty simplicial complex, and its reduced cohomology is 
						as described (see \cite{millerSturmfels}*{Remark 1.19}).

		\item If $ W $ is non-empty but $ \rk(W) < r $, then
					$ \Delta_{M}^{ns}\big|_{W} $ is a non-empty simplex and the
					reduced cohomology vanishes.

	\item If $W $ is non-empty and $ \rk(W) = r $, then
					$ \left(\Delta_{M}^{ns}\right) \big|_{W} = \Delta_{(M|_{W})}^{ns} $, 
				and the computation follows from Lemma \ref{lem:cohomologyNSComplex}
				applied to $ M \big|_{W} $. \qedhere
		\end{enumerate}
	\end{proof}

	We arrive at the main result of this section.
	\begin{thm}\label{thm:HPoinLiftEmpty}
		Let $M$ be a matroid of rank $ r $ on the ground set $[n]$. Then,
		\begin{align*}
					\HPoin\left(X_{\widehat{\Sigma}_{M, \emptyset}
					\setminus \rho_{[n]}} \right)
					=  1  + wx^{r} \sum_{B \in \mathcal{B}(M)} (1 + wx)^{\ep(B)}
					= 1 + (1 +wx)^{n-r} wx^{r} T_{M}\left( 1, \, \frac{1}{1+wx} \right).
		\end{align*}
		\end{thm}

	\begin{proof}
					Directly applying Lemma \ref{lem:restrictionsNS} to Lemma \ref{lem:toricAffine},
					we obtain
					\[ 
						\HPoin\left( X_{\widehat{\Sigma}_{M, \emptyset}
						\setminus \rho_{[n]}} \right) =
						1 + \sum_{\substack{ W \subseteq [n] \\ \rk(W) = r}}
						\left| \NBC\left(M\big|_{W} \right) \right| \cdot w^{\left| W \right|
						-r + 1} x^{\left| W \right|}.
					\]
					By the following lemma, the contribution of a basis $ B \in \mathcal{B}(M) $ to this
					sum is $ wx^{r} (1 + wx)^{\ep(B)} $, and
					indexing the summation over $ \mathcal{B}(M) $  produces the first equality in the statement.
					As $ n-r-\ea(B) = \ep(B) $, the definition of the Tutte polynomial produces
					the second equality.
	\end{proof}

	\begin{lem}\label{lem:epBasis}
			Let $M$ be a loopless matroid of rank $ r $ on the ground set $[n]$, 
			and let $ B \in \mathcal{B}(M) $. Then,
			\[
							\left| \left\{ W \subseteq [n] \vphantom{\frac{}{}} : 
						\left| W \right| = r + i \textrm{ and }
						B \in \NBC\left(M\big|_{W}\right) \right\} \right|
						= \binom{\ep(B)}{i}.
			\]
		\end{lem}

		\begin{proof}
			The set 
			$ \left\{ W \subseteq [n] : \left| W \right| 
							= r + i \textrm{ and } B \in \NBC\left( M\big|_{W}
							\right) \right\} $
			is in bijection with the collection of $i$-element subsets of $ \EP(B) $
			by the map $ X \mapsto X \setminus B $.
			Indeed, a subset $ W \subseteq[n] $ satisfies $ B \in \NBC\left( M
			\big|_{W} \right) $ if and only if $ B \subseteq W $ and 
			every $ x \in W \setminus B $ is in $ \EP(B) $. 
		\end{proof}
	
\begin{rmk}\label{r:codingTheory}
    Specializations of the Tutte polynomial similar to that in
		Thereom \ref{thm:HPoinLiftEmpty} have appeared in the coding theory literature, 
		and it would be interesting to see if there is any underlying
		relation to the geometry of these toric varieties.
  	For example, see \cite{weightPolynomialsMatroids}*{Corollary 5.1}.
 \end{rmk}

	From the support of the Hilbert--Poincar\'{e} polynomial we read off the following 
	vanishing results for the cohomology of
	$ X_{\widehat{\Sigma}_{M, \emptyset} \setminus \rho_{[n]}} $. 	

	\begin{cor}\label{cor:weightLiftEmpty}
		Let $M$ be a matroid of rank $ r $ on the ground set $[n]$. 
		Then,
		\begin{enumerate}
			\item For $ k = 0 $, $ H^{k}\left( X_{\widehat{\Sigma}_{M, \emptyset}
										\setminus \rho_{[n]}}\right) \cong 
									\Gr_{k}^{W} H^{k}\left(X_{\widehat{\Sigma}_{M, \emptyset}
										\setminus \rho_{[n]}}\right) $.
			\item For $ 0 < k < 2r-1 $,
							$H^{k}\left(X_{\widehat{\Sigma}_{M, \emptyset}
										\setminus \rho_{[n]}}\right) = 0 $.
			\item For $ k > 0 $, 
						$H^{k}\left(X_{\widehat{\Sigma}_{M,\emptyset}
										\setminus \rho_{[n]}}\right) \cong 
								\Gr^{W}_{2k-2r+2}H^{k}\left(X_{\widehat{\Sigma}_{M,\emptyset}
										\setminus \rho_{[n]}}\right) $.
		\end{enumerate}
	\end{cor}

	\section{\texorpdfstring{$\HPoin\left(X_{\Sigma_{M, \emptyset}}\right) $}
	{The Hodge--Poincar\'{e} polynomial of the Bergman fan of the empty order filter.}
	via flat base change}\label{section:empty}

	In this section, we compute $ \HPoin(X_{\Sigma_{M,\emptyset}}) $
	from $ \HPoin(X_{\widehat{\Sigma}_{M, \emptyset}\setminus \rho_{[n]}}) $. 
	We compare the cohomology of $ X_{\Sigma_{M, \emptyset} \setminus \rho_{[n]}} $
	and $ X_{\Sigma_{M, \emptyset}} $ through a long exact sequence
	\begin{equation}\label{eq:lesWeight}
		\cdots \to \Gr^{W}_{\bullet}H^{\bullet}\left( X_{\Sigma_{M, \emptyset}}
		\right) \to \Gr^{W}_{\bullet + 2} H^{\bullet + 2} \left( X_{\Sigma_{M,
		\emptyset}} \right)  \to
		\Gr^{W}_{\bullet + 2} H^{\bullet + 2} \left( X_{\widehat{\Sigma}_{M,
					\emptyset}\setminus \rho_{[n]}}\right) \to \cdots.
	\end{equation}
	Crucially, each group $H^{k}(X_{\widehat{\Sigma}_{M, \emptyset} \setminus
	\rho_{[n]}}) $ is supported in a single associated graded 
	of the weight filtration by Corollary
	\ref{cor:weightLiftEmpty}. We will show that this is true for
	$ H^{k}(X_{\Sigma_{M, \emptyset}}) $ as well, and this forces
	\eqref{eq:lesWeight} to frequently split into short exact sequences.
	This splitting enables the computation of $ \HPoin(X_{\Sigma_{M, \emptyset}})$.

	We begin by constructing \eqref{eq:lesWeight}, where we make use
	of flat base change for Tor.
	\begin{prop}[\cite{weibel}*{Proposition 3.2.9}]
		\label{prop:flatBaseChange}
		If $ R \to S $ is a flat ring map, $ A $ is an $  R $-module, and
		$ B $ is an $ S $-module, then there is a natural isomorphism
		\[ 
			 \Tor_{\bullet}^{R}(A,B) \cong \Tor_{\bullet}^{S}(A
					\otimes_{R} S, B).
		\]
	\end{prop}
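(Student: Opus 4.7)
The plan is to prove this by the standard argument that a flat ring map transports projective resolutions, so the same complex computes both sides of the asserted isomorphism.

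First I would choose a projective (or even free) resolution $P_{\bullet} \to A$ of $A$ in the category of $R$-modules. The key observation is then that flatness of $R \to S$ means the functor $- \otimes_{R} S$ is exact, so applying it to $P_{\bullet} \to A$ yields an exact sequence $P_{\bullet} \otimes_{R} S \to A \otimes_{R} S$. Moreover, tensoring a free (or projective) $R$-module with $S$ produces a free (resp.\ projective) $S$-module, so $P_{\bullet} \otimes_{R} S$ is an $S$-projective resolution of $A \otimes_{R} S$.

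Next I would compute $\Tor_{\bullet}^{S}(A \otimes_{R} S, B)$ using this resolution, obtaining $H_{\bullet}\bigl((P_{\bullet} \otimes_{R} S) \otimes_{S} B\bigr)$. The associativity/adjunction of the tensor product gives a natural isomorphism
\[
    (P_{\bullet} \otimes_{R} S) \otimes_{S} B \;\cong\; P_{\bullet} \otimes_{R} B,
\]
valid because $B$ is an $S$-module (and hence an $R$-module via the structure map $R \to S$). Taking cohomology on both sides then identifies $\Tor_{\bullet}^{S}(A \otimes_{R} S, B)$ with $H_{\bullet}(P_{\bullet} \otimes_{R} B) = \Tor_{\bullet}^{R}(A, B)$, and naturality in $A$ and $B$ is inherited from the functoriality of $P_{\bullet}$ and the tensor product.

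There is no real obstacle: the only substantive input is the flatness of $R \to S$, which ensures that the tensored resolution remains exact and thus still computes Tor, and this is precisely the hypothesis of the statement. In the paper's application we have $R = S_{\M}^{\circ}$, $S = S_{\M} \cong S_{\M}^{\circ}[x_{n}]$, and $B = \mathbb{C}[\Sigma_{\M, \emptyset}]$, so the proposition supplies the change-of-rings bridge needed to translate the Hochster computation of Section \ref{s:Hochster} into a statement over $S_{\M}^{\circ}$.
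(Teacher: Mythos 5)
Your argument is correct and is precisely the standard base-change proof that the paper delegates to the citation of \cite{weibel}*{Proposition 3.2.9}: tensor an $R$-projective resolution of $A$ with $S$, use flatness to see it stays a resolution, and cancel $\otimes_{S}$ against the $S$-module structure of $B$. The only cosmetic slip is calling the final step ``taking cohomology'' where you mean homology of the complexes $(P_{\bullet}\otimes_{R}S)\otimes_{S}B\cong P_{\bullet}\otimes_{R}B$; otherwise nothing needs to change.
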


	For our construction, let $N $ and $\widehat{N} $ be the cocharacter
	lattices of  $ X_{\Sigma_{M, \emptyset}} $
	and $ X_{\widehat{\Sigma}_{M, \emptyset} \setminus \rho_{[n]}} $, respectively.
	By definition, $ N $ is the quotient of $ \widehat{N} $ by $ \mathbb{Z} \cdot e_{[n]} $. 
	Then, $ N^{\vee} \subseteq 
	\widehat{N}^{\vee} $, and we can choose $ \ell \in \widehat{N}^{\vee} $ 
	such that 
	$ N_{M}^{\vee} \oplus \mathbb{Z} \cdot \ell = \widehat{N}_{M}$.
	Therefore, $ \Sym(N_{\mathbb{Q}}^{\vee})[\ell] =
	\Sym(\widehat{N}_{\mathbb{Q}}^{\vee}) $, and the inclusion
	$ \Sym(N_{\mathbb{Q}}^{\vee}) \hookrightarrow 
		\Sym(\widehat{N}_{\mathbb{Q}}^{\vee}) $ is flat.

	Note that 
	$ \mathbb{Q}[\Sigma_{M, \emptyset}] $ and $ \mathbb{Q}[\widehat{\Sigma}_{M,
	\emptyset} \setminus \rho_{[n]}] $ 
	are isomorphic as $ \Sym(N_{\mathbb{Q}}^{\vee}) $-algebras, and let $y$
	be the image of $\ell $ in $ \mathbb{Q}[\widehat{\Sigma}_{M, \emptyset}
	\setminus \rho_{[n]} ]
	\cong \mathbb{Q}[\Sigma_{M, \emptyset}] $ under the structure map.
	We have the short exact sequence of 
	$ \Sym(\widehat{N}^{\vee}_{\mathbb{Q}}) $-algebras
	\begin{multline*}
			0 \to \mathbb{Q}[\Sigma_{M, \emptyset}] \otimes_{\Sym(N^{\vee}_{\mathbb{Q}})} 
			\Sym(\widehat{N}_{\mathbb{Q}}^{\vee}) 
			\xrightarrow{ \cdot (y \otimes 1 - 1 \otimes \ell)}
					\mathbb{Q}[\Sigma_{M, \emptyset}] \otimes_{\Sym(N_{\mathbb{Q}}^{\vee})} 
					\Sym(\widehat{N}_{\mathbb{Q}}^{\vee})
					\\  \longrightarrow \mathbb{Q}[\Sigma_{M, \emptyset}] 
					\otimes_{\Sym(\widehat{N}_{\mathbb{Q}}^{\vee})} 
  			\Sym(\widehat{N}_{\mathbb{Q}}^{\vee})\to 0.
	\end{multline*}
	Applying
	$ \Tor_{\bullet}^{\Sym(\widehat{N}_{\mathbb{Q}}^{\vee})}
		(-, \mathbb{Q})_{\bullet} $,
	flat base change, and Theorem \ref{thm:franzRing}, this is the desired long exact 
	sequence \eqref{eq:lesWeight}.

	The next two lemmas use \eqref{eq:lesWeight} to show that  each cohomology group 
	$ H^{k}(X_{\Sigma_{M, \emptyset}}) $ sits inside a single weight space.

	\begin{lem}\label{lem:weightEmptyChow}
		Let $M$ be a loopless matroid of rank $ r $. 
		For $ 0 \leq k \leq 2r-2 $ and $k$ even,
		\[ 
				H^{k}\left( X_{\Sigma_{M, \emptyset}}\right)
					\cong 
					\Gr_{k}^{W}H^{k}\left( X_{\Sigma_{M, \emptyset}}  \right)
					\cong \mathbb{Q}.
		\]
		For $ 1 \leq k \leq 2r-3 $ and $k$ odd,
		$H^{k}\left( X_{\Sigma_{M, \emptyset}} \right) = 0 $.
	\end{lem}

	\begin{proof}
		We induct on $k$. For $ k=0 $, $ X_{\Sigma_{M, \emptyset}} $ is non-empty and connected,
		so $ H^{0}(X_{\Sigma_{M, \emptyset}}) \cong \mathbb{Q} $.
		For $ k = 1 $, \eqref{eq:lesWeight} reads
		\[ 
			\Gr^{W}_{j-2}H^{-1}\left(X_{\Sigma_{M, \emptyset}}\right)  \to 
			\Gr^{W}_{j}H^{1}\left(X_{\Sigma_{M, \emptyset}}\right) \to
			\Gr^{W}_{j}H^{1}\left(X_{\widehat{\Sigma}_{M, \emptyset}
					\setminus \rho_{[n]}}\right)
		\]
		for all $j$. The left term clearly vanishes, and the right term 
		vanishes by Corollary \ref{cor:weightLiftEmpty}.

		For the inductive step, \eqref{eq:lesWeight}
		reads
		\[\Gr^{W}_{j}H^{k-1}\left(X_{\widehat{\Sigma}_{M, \emptyset}
					\setminus \rho_{[n]}}\right) \to
			\Gr^{W}_{j-2}H^{k-2}\left(X_{\Sigma_{M, \emptyset}}\right)  \to 
			\Gr^{W}_{j}H^{k}\left(X_{\Sigma_{M, \emptyset}}\right) \to
			\Gr^{W}_{j}H^{k}\left(X_{\widehat{\Sigma}_{M, \emptyset}
					\setminus \rho_{[n]}}\right).
		\]
		For all $j$ and $ 2 \leq k \leq 2r-2 $, the two outer terms vanish
		by Corollary \ref{cor:weightLiftEmpty}, so
		\[ 
			\Gr^{W}_{j-2}H^{k-2}\left(X_{\Sigma_{M, \emptyset}}\right)  \cong
			\Gr^{W}_{j}H^{k}\left(X_{\Sigma_{M, \emptyset}}\right). \qedhere
		\]
	\end{proof}

	\begin{lem}\label{lem:weightEmptyHigher}
		Let $M$ be a loopless matroid of rank $ r$. For $ k \geq 2r-2 $,
		\[ 
			 H^{k}\left( X_{\Sigma_{M, \emptyset}} \right) \cong
			\Gr_{2k-2r+2}^{W}H^{k} \left( X_{\Sigma_{M, \emptyset}} \right).
		\]
	\end{lem}

	\begin{proof}
		We start by showing that $ \Gr_{j}^{W}H^{k}( X_{\Sigma_{M, \emptyset}} )= 0 $ 
		for $ k \geq 2r-3 $ and $ j > 2k-2r + 2$.
		We induct on $k$. When $ k = 2r-3 $ or
		$k = 2r-2 $, this
		is Lemma \ref{lem:weightEmptyChow}. 

		For the inductive step,
		 \eqref{eq:lesWeight} reads
		\[ 
					\Gr_{j-2}^{W} H^{k-2}\left( X_{\Sigma_{M,\emptyset}} \right)
					\to \Gr_{j}^{W} H^{k} \left( X_{\Sigma_{M, \emptyset}} \right)
					\to \Gr_{j}^{W}H^{k} \left( X_{\widehat{\Sigma}_{M, \emptyset}
					\setminus \rho_{[n]}} \right).
		\]
		The left term vanishes by the inductive hypothesis, while the
		right term vanishes by Corollary \ref{cor:weightLiftEmpty}, so
		$\Gr_{j}^{W} H^{k} ( X_{\Sigma_{M, \emptyset}}) = 0 $.

		We now show that $ \Gr_{j}^{W}H^{k}(X_{\Sigma_{M, \emptyset}}) = 0 $ for 
  	$ k \geq 2r-2 $ and $ j < 2k -2r + 2 $. 
		This will be a downward induction on $k$. For $ k$ sufficiently large,
		$H^{k}(X_{\Sigma_{M, \emptyset}}) = 0 $.

		For the inductive step, \eqref{eq:lesWeight} reads
		\[ 
			\Gr^{W}_{j+2}H^{k+1}\left(X_{\widehat{\Sigma}_{M, \emptyset}
					\setminus \rho_{[n]}}\right) \to 
			\Gr^{W}_{j}H^{k}\left(X_{\Sigma_{M, \emptyset}}\right)  \to
			\Gr^{W}_{j+2}H^{k+2}\left(X_{\Sigma_{M, \emptyset}}\right).
		\]
		The left term vanishes by Corollary \ref{cor:weightLiftEmpty},
		and the right term vanishes by the inductive hypothesis,
		so $\Gr_{j}^{W} H^{k} ( X_{\Sigma_{M, \emptyset}}) = 0 $.
	\end{proof}

	The concentration of weights implies that  \eqref{eq:lesWeight}
	often splits into short exact sequences.

	\begin{lem}\label{lem:vanishingMultiplicationMap}
		Let $M$ be a loopless matroid of rank $r$. Then, the map
		\[ 
					\Gr_{j}^{W}H^{k}\left( X_{\Sigma_{M, \emptyset}}\right)
					\to \Gr_{j+2}^{W}H^{k+2} \left( X_{\Sigma_{M, \emptyset}}
					\right) 
		\]
		in \eqref{eq:lesWeight} vanishes except when
		$ j=k $, $ 0 \leq k <  2r-2 $, and $k$ is even.
	\end{lem}

	\begin{proof}
		Suppose it is not the case that $j = k $, $ 0 \leq k < 2r-2 $, and $k$ 
		is even. 

		If $ k < 2r-2 $, then $ \Gr_{j}^{W}H^{k}\left( X_{\Sigma_{M, \emptyset}} \right)= 0 $ by
		Lemma \ref{lem:weightEmptyChow}, and the map is zero.
		On the other hand, if $ k \geq 2r-2 $, then either
		$ \Gr_{j}^{W}H^{k}( X_{\Sigma_{M, \emptyset}}) $ 
		or $ \Gr_{j+2}^{W}H^{k+2} ( X_{\Sigma_{M, \emptyset}}) $
		vanishes by Lemma  \ref{lem:weightEmptyHigher}.
		Indeed, if $ \Gr_{j}^{W}H^{k}( X_{\Sigma_{M, \emptyset}}) \neq 0 $, then
		$ j = 2k -2r + 2 $. This implies that $ j+ 2 \neq 2(k+2)-2r + 2 $, and
		$ \Gr_{j+2}^{W}H^{k}(X_{\Sigma_{M, \emptyset}}) = 0 $. In either
		case, the map in question vanishes.
	\end{proof}

	We now arrive at our computation of $ \HPoin(X_{\Sigma_{M, \emptyset}}) $.

\begin{thm}\label{thm:HPoinEmpty}
		Let $M$ be a loopless matroid of rank $ r $. Then,
		\[ 
			\HPoin\left(X_{\Sigma_{M, \emptyset}}\right) = 
						\sum_{i=0}^{r-1} x^{r} + wx^{r} \sum_{B \in \mathcal{B}(M) \setminus
						B_{\max}} (1+ wx)^{\ep(B)-1}.
		\]
	\end{thm}

	\begin{proof}
		Shifting \eqref{eq:lesWeight} reads
		\[ 
				\cdots \xrightarrow{\delta} \Gr_{j}^{W}H^{k}\left(X_{\Sigma_{M, \emptyset}}
					\right) \to \Gr_{j}^{W}H^{k}\left(X_{\widehat{\Sigma}_{M, \emptyset},
					\setminus \rho_{[n]}}
					\right)
					\to \Gr_{j-2}^{W}H^{k-1}\left(X_{\Sigma_{M,\emptyset}}\right)
					\xrightarrow{\delta} \cdots,
		\] 
			where the maps $ \delta $ are as in Lemma \ref{lem:vanishingMultiplicationMap}.
			The vanishing of $ \delta $ implies that
			\[ \dim \Gr_{j}^{W}H^{k}(X_{\Sigma_{M, \emptyset}}) +
			\dim \Gr_{j-2}^{W} H^{k-1}(X_{\Sigma_{M, \emptyset}}) = 
			\dim \Gr_{j}^{W}H^{k} ( X_{\widehat{\Sigma}_{M, \emptyset}
					\setminus \rho_{[n]}}\]
		except in the following cases:
		\begin{enumerate} 
			\item $ j = k $, $2\leq k < 2r $, and $ k $ is even, or
		  \item $ j = k+1 $, $ 1 \leq k < 2r-1 $, and $k$ is odd.
		\end{enumerate}
		In  these exceptional cases, 
		Lemma \ref{lem:weightEmptyChow} shows that
		\begin{equation*}
				 \dim \Gr_{j}^{W}H^{k}\left(X_{\Sigma_{M, \emptyset}}\right) +
					\dim \Gr_{j-2}^{W} H^{k-1}\left(X_{\Sigma_{M, \emptyset}}\right)
				= \dim \Gr_{j}^{W}H^{k} \left( X_{\widehat{\Sigma}_{M, \emptyset}
						\setminus \rho_{[n]}}
				\right) + 1.
		\end{equation*}	
		Therefore,
		\begin{equation*}\label{eq:HPoinEmpty}
			\HPoin\left( X_{\Sigma_{M, \emptyset}} \right) + wx \cdot \HPoin
			\left( X_{\Sigma_{M, \emptyset}} \right) = 
			\HPoin\left(X_{\widehat{\Sigma}_{M, \emptyset} \setminus \rho_{[n]}}
				\right) +
			\sum_{i=1}^{r-1} x^{i} + w \sum_{i=1}^{r-1} x^{i}
		\end{equation*}
	By Theorem \ref{thm:HPoinLiftEmpty} and Lemma
	\ref{lem:epMaxBasis},
	\begin{equation*}
		\HPoin\left( X_{\Sigma_{M, \emptyset}} \right) + wx \cdot \HPoin
			\left( X_{\Sigma_{M, \emptyset}} \right) = \sum_{i=1}^{r-1} x^{i} + w \sum_{i=1}^{r-1} x^{i} 
		  + 1 + wx^{r} \sum_{B \in \mathcal{B}(M)} (1 + wx)^{\ep(B)}.
	\end{equation*}

	Ordering $ \mathcal{B}(M) $ lexicographically, the following lemma shows that $ B_{\max} $,
	the lexicographically maximal basis, is the unique basis $B$  with $ \ep(B) = 0 $.
	Therefore, 
	\begin{equation*}
		\HPoin\left( X_{\Sigma_{M, \emptyset}} \right) + wx \cdot \HPoin
		\left( X_{\Sigma_{M, \emptyset}} \right) = 
		\sum_{i=0}^{r-1} x^{i} + wx \sum_{i=0}^{r-1} x^{i} 
		+ wx^{r} \sum_{B \in \mathcal{B}(M) \setminus B_{\max}} (1+wx)^{\ep(B)}.
	\end{equation*}
					and we may divide everything by $ 1 + wx $ to solve for $ \HPoin(X_{\Sigma_{M, \emptyset}}) $.
	\end{proof}

		\begin{lem}\label{lem:epMaxBasis}
			Let $M$ be a loopless matroid. Then, $ \ep(B) = 0 $ if and only
			if $ B = B_{\max} $.
		\end{lem}

		\begin{proof}
			We prove the contrapostive. If $ \ep(B) \neq 0 $,
			choose $ x \in \EP(B) $ and $ y \in C_{B \cup x} $ with $ y < x $.
			Then, $ \left( B \setminus y \right) \cup x >_{\lex} B $, so
			$ B \neq B_{\max} $.

			Conversely, suppose that $ B \neq B_{\max} $. By a symmetric version of
			basis exchange, we can find a basis $ B' >_{\lex} B $ such that
			$ \left| B' \setminus B \right| = 1 $, and we claim that the
			element $ x = B' \setminus B $ is in $ \EP(B) $. Indeed, we take
			$ y = B  \setminus B' $ and find that $ x > y $ and $ y \in C_{B \cup
			x} $. Therefore, $ x \in \EP(B) $, and $ \ep(B) > 0 $.
		\end{proof}

	Theorem \ref{thm:HPoinEmpty} in fact describes the Hodge numbers of the singular cohomology
	ring of matroids of small rank. 

	\begin{cor}\label{cor:HPoinRank2}
		Let $M$ be a loopless matroid of rank $ 1 $ or a
					simple matroid of rank $2 $. Then,
			\[ 
					\HPoin\left(X_{\Sigma_{M}} \right) = 1 + x + wx^{2} \sum_{B \in
					\mathcal{B}(M) \setminus B_{\max}} (1+wx)^{\ep(B)-1}.
			\]
	\end{cor}

	\begin{proof}
		For such matroids, $ \Sigma_{M, \emptyset} = \Sigma_{M} $.
	\end{proof}

  From the support of the Hodge--Poincar\'{e} polynomial, we read off the following sharp 
	vanishing result.

	\begin{cor}\label{cor:vanishingEmpty}
		Let $M$ be a loopless matroid of rank $ r $ on the ground set $[n]$. Then,
		\begin{enumerate}
			\item $\Gr_{j}^{W}H^{k} (X_{\Sigma_{M,\emptyset}}) = 0 $ whenever
							$ j > n - r + k $ or $ j < 2k - 2r + 2 $.
			\item $H^{k}(X_{\Sigma_{M, \emptyset}}) = 0 $ whenever $ k > n+r- 2 $.
		\end{enumerate}
		Moreover, $ \Gr_{2n-2}^{W} H^{n+r-2}(X_{\Sigma_{M, \emptyset}}) $ is
		the top-weight cohomology, and its dimension is $ \mu(M) $.
	\end{cor}

	In Section \ref{section:flips}, we show this sharp vanishing result holds for
	$H^{\bullet}(X_{\Sigma_{M, \mathcal{P}}}) $ for an arbitrary order filter $\mathcal{P} $.

	\begin{example}\label{ex:uniformEmpty}
		Let $ U_{r,n} $ be the rank $ r $ uniform matroid on the ground set $[n]$ for $ r \geq 1 $.
		We will show that	
		\begin{equation}\label{eq:uniformEmpty} 
			\HPoin\left( X_{\Sigma_{U_{r,n}, \emptyset}} \right) =
						\sum_{i=0}^{r-1} x^{i} + \sum_{k=0}^{n-r-1} \binom{r+k}{r-1} \cdot 
				wx^{r}(1+wx)^{k}.
		\end{equation}	
		For $-1 \leq k \leq n-r-1 $, there are $ \binom{r+k}{r-1} $ bases of 
		$U_{r,n} $ with minimal element $ n-r-k $. These are precisely the bases $B$ 
		with $ \ep(B) =k+1 $. The maximal basis has minimal element $ n-r+1 $. 
		Equation \eqref{eq:uniformEmpty} then follows from Theorem
		\ref{thm:HPoinEmpty}.
	\end{example}
	
		\section{Vanishing results for the singular cohomology ring of a matroid}\label{section:flips}

		In this section, we prove Theorem \ref{thm:vanishingCohomology}, which extends the 
		sharp vanishing result of Corollary \ref{cor:vanishingEmpty}  
		to an arbitrary order filter $ \mathcal{P} $.
		We begin by studying how the cohomology $ H^{\bullet}(X_{M, \mathcal{P}}) $ changes under
		a single matroidal flip.

		\subsection{The long exact sequence of a matroidal flip}
				Fix a matroidal flip $ \Sigma_{M, \mathcal{P}_{-}} \rightsquigarrow \Sigma_{M, \mathcal{P}_{+}} $
		with center $ Z = \mathcal{P}_{+} \setminus \mathcal{P}_{-}  $.
		We construct \textit{the long exact sequence of a matroidal flip} comparing 
		$ H^{\bullet}(X_{\Sigma_{M, \mathcal{P}_{-}}}) $ and 
		$ H^{\bullet}(X_{\Sigma_{M, \mathcal{P}_{+}}}) $. Our construction is inspired by blow-ups.

		Combinatorially, $ \Sigma_{M, \mathcal{P}_{+}} $ can be described by taking
		the stellar subdivision (Definition \ref{def:stellarSubdivision})
		of $ \Sigma_{M, \mathcal{P}_{-}} $ along
		$ \sigma_{Z < \emptyset} $ and then removing all cones $ \sigma_{I < \mathcal{F}}$
		with $ \closure(I) = Z $. The induced map $ X_{\Sigma_{M, \mathcal{P_{+}}}} 
		\to X_{\Sigma_{M, \mathcal{P}_{-}}} $ is therefore an open inclusion composed with a
		blow-down map.

		In the case of an actual blow-down map $ X' \to X $, a comparison of Mayer--Vietoris sequences 
		produces a long exact sequence relating $H^{\bullet}(X) $ and $ H^{\bullet}(X') $
		(see Appendix \ref{section:blowups}). Matroidal flips are close enough to blow-down that
		we can make a similar comparison of Mayer--Vietoris sequences on 
		$ X_{\Sigma_{M, \mathcal{P}_{-}}} $ and $ X_{\Sigma_{M, \mathcal{P}_{+}}} $ to 
		produce the long exact sequence of a matroidal flip. 

		The following subfans induce the open covers.

		\begin{definition}
			For a matroidal flip $ \Sigma_{M, \mathcal{P}_{-}} \rightsquigarrow
			\Sigma_{M, \mathcal{P}_{+}} $ with center $ Z =
			\mathcal{P}_{+} \setminus \mathcal{P}_{-} $, define
			\[	H_{-} := \left\{ \sigma_{I < \mathcal{F}} : 
						\closure(I) \neq Z \right\} \subseteq
						\Sigma_{M, \mathcal{P}_{-}} \;\;\; \textrm{and} \;\;\; 
					H_{+} := \left\{ \sigma_{I < \mathcal{F}} : Z 
					\notin \mathcal{F} \right\}
							\subseteq \Sigma_{M, \mathcal{P}}.\]
		\end{definition}
		The fans $ \Sigma_{M, \mathcal{P}_{-}} $ and $ \Sigma_{M, \mathcal{P}_{+}} $
		are defined on the same vector space $ N_{\mathbb{Q}} $, so their cones 
		and subfans may	be compared.

		\begin{lem}\label{lem:equalityFans}
			As rational fans, $ H_{-}=  H_{+} $, and 
			$ \overline{\Star}(\sigma_{Z < \emptyset}) \cap H_{-} 
				= \overline{\Star}(\rho_{Z}) \cap H_{+} $.
		\end{lem}

		\begin{proof}
			For the first claim, take $ \sigma_{I < \mathcal{F}} \in H_{-} $.
			As $ \closure(I) $ is neither in $ \mathcal{P}_{-} $ nor is 
			$ Z $, we see that $ \closure(I) \notin \mathcal{P}_{+} $. 
			Therefore, $ \sigma_{I < \mathcal{F}} \in \Sigma_{M, \mathcal{P}_{+}} $. 
			Moreover, $ Z \notin \mathcal{F} $, so $ \sigma_{I < \mathcal{F}} \in H_{+} $.
			Conversely, for $ \sigma_{I < \mathcal{F}} \in H_{+} $, we have
			$ \closure(I) \notin \mathcal{P}_{+} = \mathcal{P}_{-} \cup Z $
			and $ \mathcal{F} $ a chain in $ \mathcal{P}_{-} $. Therefore 
			$ \sigma_{I < \mathcal{F}} \in H_{-} $.

			For the second claim, we will show that $ \overline{\Star}(\sigma_{Z <
						\emptyset}) \cap H_{-} $ and $ \overline{\Star}(\rho_{Z}) \cap H_{+} $ 
			consist of cones $ \sigma_{I < \mathcal{F}} \in H_{-} (= H_{+}) $ such that
			$ I \subseteq Z $ and $ Z < \mathcal{F} $. This is clear for
			$ \overline{\Star}(\rho_{Z}) \cap H_{+} $. For 
			$ \overline{\Star}(\sigma_{Z < \emptyset}) \cap H_{-} $ it is clear 
			that $ Z < \mathcal{F} $. If it were the case that $ I \not\subseteq Z $, then 
			we would have
			$ \sigma_{ I \cup Z < \mathcal{F}} \in \Sigma_{M, \mathcal{P}_{-}} $.
			But this is a contradiction, as
			$ \closure(I \cup Z) $ strictly contains $ Z $, so it is in
			$ \mathcal{P}_{-} $ because it is an order filter.
		\end{proof}

		\begin{lem}\label{lem:fanMV}
			As rational fans, $ \Sigma_{M, \mathcal{P}_{-}} = \overline{\Star}(\sigma_{Z<
			\emptyset}) \cup H_{-} $, and $ \Sigma_{M, \mathcal{P}_{+}} = 
			\overline{\Star}(\rho_{Z}) \cup H_{+} $.
		\end{lem}

		\begin{proof}
			If $ \sigma_{I < \mathcal{F}} \in \Sigma_{M, \mathcal{P}_{-}}  \setminus H_{-} $,
			then $ \closure(I) = Z $, and $ \sigma_{I< \mathcal{F}} \in 
			\overline{\Star}(\sigma_{Z < \emptyset})$.
			Similarly, if $ \sigma_{I < \mathcal{F}} \in \Sigma_{M, \mathcal{P}_{+}} \setminus H_{+} $,
			then $ Z \in \mathcal{F} $ and $ \sigma_{I < \mathcal{F}} \in \overline{\Star}(\rho_{Z}) $.
		\end{proof}

		The identity map $ \mathrm{id}\colon N \to N $ induces a map of 
		rational fans
		$ \Sigma_{M, \mathcal{P}_{+}} \to \Sigma_{M, \mathcal{P}_{-}} $.
		The subfans $ H_{-} $ and $ H_{+} $ were defined so that this map restricts
		to submaps
		$ \overline{\Star}(\rho_{Z}) \to \overline{\Star}(\sigma_{Z < \emptyset}) $,
		$ H_{+} \to H_{-} $, and $ \overline{\Star}(\rho_{Z}) \cap H_{+} 
		\to \overline{\Star}(\sigma_{Z < \emptyset}) \cap H_{-} $. 
		These last two maps are isomorphisms by Lemma \ref{lem:equalityFans}.
		By Lemma \ref{lem:fanMV}, we have 
		open covers $ X_{\Sigma_{M, \mathcal{P}_{-}}} = 
		X_{\overline{\Star}(\sigma_{Z < \emptyset})} \cup X_{H_{-}}  $ and 
		$ X_{\Sigma_{M, \mathcal{P}_{+}}} = X_{\overline{\Star}(\rho_{Z})}
		\cup X_{H_{+}} $ which produce Mayer--Vietoris exact sequences. 
		The above maps of fans then produce commutating maps between the
		two exact sequences.
		\begin{equation}			
						\begin{tikzcd}[column sep = small]\label{eq:MV}
							\cdots \rar & H^{\bullet}\left( X_{\Sigma_{M, \mathcal{P}_{+}}} \right)
							\rar & H^{\bullet} \left( X_{\overline{\Star}(\rho_{Z})} \right) 
							\oplus H^{\bullet} \left( X_{H_{+}} \right) \rar & 
							H^{\bullet} \left( X_{\overline{\Star}(\rho_{Z}) \cap H_{+}} \right) 
							\rar & \cdots \\
							\cdots \rar & H^{\bullet}\left( X_{\Sigma_{M, \mathcal{P}_{-}}} \right)
							\rar \uar["\phi^{*}"] & H^{\bullet} \left( X_{
											\overline{\Star}(\sigma_{Z < \emptyset})} \right) 
							\oplus H^{\bullet} \left( X_{H_{-}} \right) \rar 
							\uar["\phi^{*} 
							\oplus  \,
										\phi^{*} "]
							& 
							H^{\bullet} \left( X_{\Pi_{-} \cap H_{-}} \right) \rar 
										\uar["\phi^{*}"] & \cdots.
			\end{tikzcd}
		\end{equation}

		We wish to apply Lemma \ref{lem:LESBlowup} to \eqref{eq:MV} in order to get a long exact
		sequence relating $ H^{\bullet}(X_{\Sigma_{M, \mathcal{P}_{-}}}) $ and
		$H^{\bullet}(X_{\Sigma_{M, \mathcal{P}_{+}}}) $. The next lemma shows that
		the hypotheses of Lemma \ref{lem:LESBlowup} are satisfied.

	\begin{lem}
					The map $ \phi^{*}
					\colon H^{\bullet}(X_{\overline{\Star}(\sigma_{Z < \emptyset})})
					\to H^{\bullet}(X_{\overline{\Star}(\rho_{Z})})$ 
					is injective, and 
					\[ \coker\phi^{*} 
					\cong \frac{H^{\bullet}\left(X_{\Sigma_{M^{Z},
					\emptyset}} \right) \otimes H^{\bullet}\left( X_{\Sigma_{M_{Z}}}
					\right)}{1 \otimes H^{\bullet}\left( X_{\Sigma_{M_{Z}}}
					\right)}.\]
	\end{lem}

	\begin{proof}
		We first compute $H^{\bullet}(X_{\overline{\Star}(\rho_{Z})}) $.
		By Lemma \ref{lem:starIsomorphism}, this is isomorphic to
		$ H^{\bullet}(X_{\Star(\rho_{Z})}) $. 
		Because $ \Star(\rho_{Z}) \cong \Sigma_{M^{Z}, \emptyset}
		\times \Sigma_{M_{Z}} $ as rational fans, the K\"{u}nneth formula implies
		that 
		\[ 
			 H^{\bullet}\left( X_{\overline{\Star}(\rho_{Z})}\right)
					\cong H^{\bullet}\left( X_{\Sigma_{M^{Z}, \emptyset}} \right)
					\otimes H^{\bullet} \left( X_{\Sigma_{M_{Z}}} \right).
		\]
				
		To compute $H^{\bullet}(X_{\overline{\Star}(\sigma_{Z < \emptyset})}) $, we
		note that $ \Star(\sigma_{Z < \emptyset})  \cong \Sigma_{M_{Z}} $ as
		rational fans. Again using Lemma \ref{lem:starIsomorphism},
		\[ 
					H^{\bullet}\left( X_{\overline{\Star}(\sigma_{Z < \emptyset})}\right)
					\cong H^{\bullet} \left( X_{\Sigma_{M_{Z}}} \right).
		\]

		These isomorphisms are realized in  Koszul homology by
		quasi-isomorphic subcomplexes
		\[ 
			K_{\bullet}(\mathbb{Q}[\Sigma_{M^{Z}, \emptyset}]) \otimes
			K_{\bullet}(\mathbb{Q}[\Sigma_{M_{Z}}] ) \hookrightarrow
			K_{\bullet}(\mathbb{Q}[\overline{\Star}(\rho_{Z})]) \;\; \textrm{and}
			\;\; K_{\bullet}(\mathbb{Q}[\Sigma_{M_{Z}}] ) \hookrightarrow
					K_{\bullet}(\mathbb{Q}[\overline{\Star}(\sigma_{Z < \emptyset})]).
		\]

		We now show that the map 
		$ \phi^{*}|_{\overline{\Star}(\sigma_{Z< \emptyset})} $
		is injective and compute the cokernel via these subcomplexes.
		Let $ \Phi \colon \mathbb{Q}[\overline{\Star}(\sigma_{Z< \emptyset})] \to 
		\mathbb{Q}[\overline{\Star}(\rho_{Z})] $ be the map defined on indeterminates by
		\[ 
			\Phi(x) = \begin{cases}
							x+ x_{\rho_{Z}} & x = x_{\rho_{i}} \textrm{ for $i \in Z $}\\
							x & \textrm{any other indeterminate}.
			\end{cases}
		\]
		Then, $ \Phi $ extends to a map 
		$
		\Phi \colon K_{\bullet}(\mathbb{Q}[\overline{\Star}(\sigma_{Z < \emptyset})])
		\to K_{\bullet}(\mathbb{Q}[\overline{\Star}(\rho_{Z})]) $.
		By Theorem \ref{thm:franzFu}, 
		$ \phi^{*}|_{\overline{\Star}(\sigma_{Z< \emptyset})} $ is realized by $ \Phi $.
		Restricting to the quasi-isomorphic subcomplexes,
		the map on homology is the natural inclusion
		\[ 
			1 \otimes H^{\bullet} \left( X_{\Sigma_{M_{Z}}} \right)
			\hookrightarrow H^{\bullet}\left( X_{\Sigma_{M^{Z}, \emptyset}}
			\right) \otimes H^{\bullet} \left( X_{\Sigma_{M_{Z}}} \right). \qedhere
		\]
		\end{proof}

		We will use the following notation for $ \coker \phi^{*}\big|_{
		\overline{\Star}(\sigma_{Z < \emptyset})} $:
			\[ 
					E_{Z}^{k} := \frac{ \bigoplus_{k_{1} + k_{2} = k}
						H^{k_{1}} \left( X_{\Sigma_{M^{Z}, \emptyset}} \right)
						\otimes H^{k_{2}} \left( X_{\Sigma_{M_{Z}}} \right)}{
						1 \otimes H^{k} \left( X_{\Sigma_{M_{Z}}} \right)},
			\]
			and 
			\[ 
						\Gr_{j}^{W} E_{Z}^{k} := \frac{ \bigoplus_{\substack{j_{1} + j_{2} = j \\
						k_{1} + k_{2}= k}}
						\Gr_{j_{1}}^{W} H^{k_{1}} \left( X_{\Sigma_{M^{Z}, \emptyset}} \right)
						\otimes \Gr_{j_{2}}^{W} H^{k_{2}} \left( X_{\Sigma_{M_{Z}}} \right)}{
										1 \otimes \Gr_{j}^{W} H^{k} \left( X_{\Sigma_{M_{Z}}} \right).}
			\]

		\begin{lem}\label{lem:LESMatroidalFlip}
			For a matroidal flip $ \Sigma_{M, \mathcal{P}_{-}} \rightsquigarrow
			\Sigma_{M, \mathcal{P}_{+}} $ with center $ Z $, there is a long
			exact sequence
				\[ \cdots \to H^{\bullet}\left(X_{\Sigma_{M, \mathcal{P}_{-}}}
				\right) \to H^{\bullet} \left( X_{\Sigma_{M, \mathcal{P}_{+}}} \right)
						\to E_{Z}^{\bullet} \to \cdots.
				\]
				Moreover, this exact sequence preserves the weight filtration, and
				this induces the \emph{long exact sequence of a matroidal flip}
				\[ 
					\cdots \to  \Gr_{\bullet}^{W}E^{\bullet-1}_{Z} 
						\to \Gr_{\bullet}^{W}H^{\bullet}\left(X_{\Sigma_{M, \mathcal{P}_{-}}}
						\right) \to \Gr_{\bullet}^{W} H^{\bullet} 
						\left( X_{\Sigma_{M, \mathcal{P}_{+}}} \right)
						\to \Gr^{W}_{\bullet} E_{Z}^{\bullet} \to \cdots.
						\]
		\end{lem}

		\begin{proof}
			All but the weight filtration is immediate from Lemma 
			\ref{lem:LESBlowup}. For this last part, we note that Mayer--Vietoris exact sequences for
			toric varieties preserve the weight filtration. Indeed,
			if we have a covering of fans $ \Sigma = \Sigma_{1} \cup 
			\Sigma_{2} $, then
			the Mayer--Vietoris exact sequence is obtained by applying the Tor functors
			to the exact sequence
			\[ 
						0 \to \mathbb{Q}[\Sigma] \to \mathbb{Q}[\Sigma_{1}]\oplus
						\mathbb{Q}[\Sigma_{2}] \to \mathbb{Q}[\Sigma_{1} \cap 
						\Sigma_{2}] \to 0.
			\]
			The associated long exact sequence of Tor groups preserves the bigrading of Tor,
			which corresponds to the weight filtration. Finally,  the construction of
			Lemma \ref{lem:LESBlowup} also preserves the weight filtration.
		\end{proof}

	\subsection{Vanishing results}\label{section:vanishing}
		We now use the long exact sequence of a matroidal flip from
		Lemma \ref{lem:LESMatroidalFlip} to prove the following sharp vanishing result.
		
		\begin{thm}\label{thm:vanishingCohomology}
			Let $ M$ be a loopless matroid of rank $ r $ on the ground set $[n]$, and
			let $ \mathcal{P} $ be an order filter on $ \overline{\mathcal{L}}(M) $. Then,
			\begin{enumerate}
				\item $ \Gr_{j}^{W} H^{k}(X_{\Sigma_{M, \mathcal{P}}}) = 0 $ whenever
								$ j > n -r+k $ or $ j < 2k -2r + 2 $.
				\item $ H^{k}(X_{\Sigma_{M, \mathcal{P}}}) = 0 $ whenever $ k > n + r - 2 $.
			\end{enumerate}
			Moreover, $ \Gr^{W}_{2n-2} H^{n+r-2}(X_{\Sigma_{M, \mathcal{P}}}) $ is the top-weight
			cohomology, and its dimension is $ \mu(M) $.
		\end{thm}

		When $ \mathcal{P} = \overline{\mathcal{L}}(M) $, this is the sharp vanishing result 
		for the singular cohomology ring of a matroid announced in the introduction.

		\begin{proof}
			We make a nested induction, first inducting on the rank  of $M$,
			and then inducting on the cardinality of $ \mathcal{P} $. 
			The base case for the induction on rank is for loopless rank 1 matroids. 
			Here, $ \Sigma_{M, \mathcal{P}} = \Sigma_{M, \emptyset} $, so the result follows
			from Corollary \ref{cor:vanishingEmpty}.
			The base case for the induction on cardinality of order filters is also 
			Corollary \ref{cor:vanishingEmpty}.

			For the induction, we may find an order filter $ \mathcal{P}' $ such that
			$ Z = \mathcal{P} \setminus \mathcal{P}' $ is a single flat. Thus $ \Sigma_{M, \mathcal{P}'}
			\rightsquigarrow \Sigma_{M, \mathcal{P}} $ is a matroidal flip with center $ Z $.

			Suppose that $ j> n-r+k $. The long exact sequence of a matroidal flip reads
			\begin{equation}\label{eq:weightLESMatroidalFlip}
				\cdots \to \Gr_{j}^{W} H^{k}\left(X_{\Sigma_{M, \mathcal{P}'}}\right)
						\to \Gr_{j}^{W} H^{k}\left(X_{\Sigma_{M, \mathcal{P}}}\right)
						\to 
							\Gr_{j}^{W} E^{k}_{Z}  \to \cdots.
			\end{equation}
		By induction, $ \Gr_{j}^{W}H^{k} (X_{\Sigma_{M, \mathcal{P}'}})$
		vanishes. For any pairs $ (j_{1}, k_{1}) $ and $ (j_{2}, k_{2}) $ such
		that $ j_{1} +j_{2} = j $ and $k_{1} + k_{2} = k $, either 
		$ j_{1} > \left| Z \right| - \rk(Z) + k_{1} $ or 
		$ j_{2} > n - \left| Z \right| - r + \rk(Z) + k_{2} $. By the induction
		on rank and Corollary \ref{cor:vanishingEmpty}, 
		$ \Gr_{j}^{W}E^{k}_{Z} = 0 $.
		Therefore, the exactness of \eqref{eq:weightLESMatroidalFlip} shows
		that $ \Gr_{j}^{W}H^{k}(X_{\Sigma_{M}}) = 0 $. 
		The proofs that $ \Gr_{j}^{W}H^{k}(X_{\Sigma_{M, \mathcal{P}_{m+1}}}) = 0 $
		for $ j < 2k -2r + 2 $ and $H^{k}(X_{\Sigma_{M, \mathcal{P}_{m+1}}}) =0 $
		for $ k > n + r-2 $	are analogous.  This proves parts (1) and (2).

		For the top-weight cohomology,
		$ H^{n+r-2}(X_{\Sigma_{M, \mathcal{P}'}}) \cong
			H^{n+r-2}(X_{\Sigma_{M, \mathcal{P}}}) $.
		Indeed, the long exact sequence of a matroidal flip reads
			\begin{equation*} 
							\cdots \to E^{n+r-3}_{Z} 	
					\to H^{n+r-2}\left( X_{\Sigma_{M, \mathcal{P}'}}\right) 
					\to H^{n+r-2}\left( X_{\Sigma_{M, \mathcal{P}}} \right) 
			\to E^{n+r-2}_{Z} \to \cdots.
			\end{equation*} 
			As $ H^{k_{1}}(X_{\Sigma_{M^{Z}, \emptyset}}) =0 $
			for $ k_{1} > \left| Z \right| + \rk(Z) - 2 $,
			and $ H^{k_{2}}(X_{\Sigma_{M_{Z}}}) = 0 $ for 
			$ k_{2} > n - \left| Z \right| + r - \rk(Z) -2 $,
			both $ E^{n+r-3}_{Z}$ and $ E^{n+r-2}_{Z} $ vanish.

			The isomorphism $ H^{n+r-2}(X_{\Sigma_{M, \mathcal{P}'}}) \cong
			H^{n+r-2}(X_{\Sigma_{M, \mathcal{P}}}) $, combined 
			with parts (1) and (2), show  that 
			$ \Gr^{W}_{2n-2} H^{n+r-2}(X_{\Sigma_{M, \mathcal{P}}}) $ is the top-weight cohomology, 
			with dimension $ \mu(M) $.
		\end{proof}
		
			While the long exact sequence of a matroidal flip computes the top-weight Hodge 
			number, it remains difficult to compute the other Hodge numbers.
			The major obstruction  is that
			the boundary maps  $ \Gr_{j}^{W}E^{k-1}_{Z} \to \Gr_{j}^{W}H^{k}(X_{\Sigma_{M, \mathcal{P}_{-}}}) $
			do not vanish in general.
			Even in the case of rank 3 matroids, it is difficult to pin down
			exactly what the kernels are. 

			\begin{example}
			Take the matroid $M$ on the ground set $[6]$ with bases
			\[ 
				\mathcal{B}(M) = \left\{ 125, 126, 135, 136, 145, 146, 235, 245, 
						246, 345, 346, 356, 456\right\}.
			\]
			If all of the boundary maps for all the long exact sequences of matroidal flips
			vanished, then $ \HPoin(X_{\Sigma_{M}}) $ would be
			\[ 
				1 + 9x + x^{2} + 28wx^{2} + 14wx^{3} + 31 w^{2} x^{3} + 17 w^{2}x^{4} 
						+ 11 w^{3} x^{4} + 6w^{3}x^{5}.
			\]
			However, a direct computation on Macaulay2 shows that
			\[ 
				 \HPoin(X_{\Sigma_{M}}) = 1 + 9x + x^{2} + 28wx^{2} +7wx^{3}
						+ 24w^{2}x^{3} + 13w^{2}x^{4} + 7w^{3}x^{4} + 6w^{3}x^{5}.
			\]
			\end{example}

			There are a two cases, however, where the boundary map is known to vanish.
			These are the topics of the next two sections.
			In Section \ref{section:Chow}, we show that the
			boundary map vanishes when the target of the boundary map is in the Chow ring.
			This will imply that matroidal flips induce short exact sequence on Chow rings,
			and we use this to recover \cite{AHK}*{Theorem 6.18}.
			In Section \ref{section:uniform}, we will show that the boundary map also vanishes whenever
			the center of the matroidal flip is an independent flat. In particular,
			this is always the case for matroidal flips on uniform matroids, and from this we obtain a formula 
			for the Hodge numbers of $ X_{\Sigma_{U_{r,n}}} $.

			\section{Matroidal Flips and Chow Rings}\label{section:Chow}

			Recall that the Chow ring is given by
			\[ 
				A^{k}\left(M, \mathcal{P}\right) =
				\Gr_{2k}^{W} H^{2k}(X_{\Sigma_{M,\mathcal{P}}}).
			\]
			In this section, we show that each boundary map of the form
			\[ 
				\delta\colon \Gr_{2k}^{W}E_{Z}^{2k-1} \to \Gr_{2k}^{W}H^{2k}(X_{\Sigma_{M,\mathcal{P}_{-}}})
			\]
			in the long exact sequence of a matroidal flip vanishes.

			Our proof is a direct computation in Koszul homology. Here, 
			the domain of $ \delta $ is the sum
			\begin{multline}\label{eq:decomposition}
							\Gr_{2k}^{W}E_{Z}^{2k-1} \cong \bigoplus_{k_{1}\geq 0}
							H_{1}\left(K(\mathbb{Q}[\Sigma_{M^{Z},\emptyset}])\right)_{k_1}
							\otimes H_{0}\left(K(\mathbb{Q}[\Sigma_{M_{Z}}] \right))_{k - k_{1}} \\
							\oplus \frac{\bigoplus_{k_{2} \geq 0} 
							H_{0}\left(K(\mathbb{Q}[\Sigma_{M^{Z},\emptyset}])\right)_{k_1}
							\otimes H_{1}\left(K( \mathbb{Q}[\Sigma_{M_{Z}}]) \right)_{k - k_{2}}}
							{1 \otimes H_{1}\left(K(\mathbb{Q}[\Sigma_{M_{Z}}]) \right)_{k}}.
			\end{multline}
		We first use the results of Section \ref{section:empty} to compute an explicit
		basis for $H_{1}\left(K(\mathbb{Q}[\Sigma_{M^{Z},\emptyset}])\right)$. 

	\begin{lem}\label{lem:koszulH1}
		Let $M$ be a loopless matroid of rank $ r $ on the ground set $[n]$. Then, the set
		\[ 
			\left\{ \prod_{k \in B \setminus i_{B}} x_{\rho_{k}} \otimes 
					(\ell_{i_{B}} - \ell_{j_{B}}) : B \in \mathcal{B}(M) \setminus 
					B_{\max}, j_{B} = \max \EP(B), i_{B} = \min(C_{B \cup j_{B}}) \right\}
		\]
		is a basis for $ H_{1}(K(\mathbb{Q}[\Sigma_{M, \emptyset}])) $.
	\end{lem}

	\begin{proof}
		Let $ A $ denote the proposed basis, which is well-defined by Lemma \ref{lem:epMaxBasis}. Each element of $A$ is a cycle 
		in the Koszul complex because $B$ and $ (B \setminus i_{B}) \cup j_{B} $ are bases.
		Because $ \left| \mathcal{B}(M) \setminus 
		B_{\max} \right| =  \dim H_{1}(K(\mathbb{Q}
		[\Sigma_{M, \emptyset}])) $ by Theorem \ref{thm:HPoinEmpty},
		it suffices to show that the elements of $ A $ are linearly
		independent.

		We have the short exact sequence of $ \Sym(N_{\mathbb{Q}}^{\vee})$-algebras
		\[ 
				0 \to I_{\Sigma_{M, \emptyset}} \to \mathbb{Q}[x_{\rho} : 
					\rho \in {\Sigma_{M, \emptyset}}(1)] \to \mathbb{Q}[\Sigma_{M, \emptyset}]
					\to 0,
		\]
		where $ I_{\Sigma_{M, \emptyset}} $ is the Stanley--Reisner ideal.
		This induces a long exact sequence in Koszul homology with connecting homomorphism
		\[ 
					\delta \colon H_{1}\left(K\left(\mathbb{Q}[\Sigma_{M, \emptyset}]
					\right) \right) \to H_{0}\left(K\left( I_{\Sigma_{M, \emptyset}}
					\right)\right).
		\]
		The vector space $H_{0}(K(I_{\Sigma_{M, \emptyset}}))$
		has a basis $ \left\{ \prod_{k \in B} x_{\rho_{k}} : B \in \mathcal{B}(M) \right\} $,
		and 
		\[ 
					\delta\left( \prod_{k \in B \setminus i_{B}} x_{\rho_{k}} \otimes 
					(\ell_{i_{B}} - \ell_{j_{B}})  \right) = \prod_{k \in B} x_{\rho_{k}} 
					- \prod_{j \in (B \setminus i_{B}) \cup j_{B}} x_{\rho_{j}}.
		\]
		From this we see that the image of $ A $ under $\delta $ has dimension
		$ \left| \mathcal{B}(M) \setminus B_{\max} \right| $, so
		the elements of $ A $ are linearly independent.
	\end{proof}

	We now deduce our main lemma of the section.

	\begin{lem}\label{lem:vanishingBoundary}
		For every $ k \in \mathbb{Z} $, the boundary map
		\[ 
			\delta\colon \Gr_{2k}^{W}E_{Z}^{2k-1} \to \Gr_{2k}^{W}H^{2k}(X_{\Sigma_{M,\mathcal{P}_{-}}}).
		\]
		in the long exact sequence of a matroidal flip vanishes.
	\end{lem}

	\begin{proof}
		In the long exact sequence of a matroidal flip we have
		\[ 
			\Gr_{2k}^{W} H^{2k-1}(X_{\Sigma_{M, \mathcal{P}_{+}}}) 
			\to \Gr_{2k}^{W}E_{Z}^{2k-1} \xrightarrow{\delta} 
			\Gr_{2k}^{W}H^{2k}(X_{\Sigma_{M,\mathcal{P}_{-}}}).
		\]
		Thus, it is enough to show that $\Gr_{2k}^{W} H^{2k-1}(X_{\Sigma_{M, \mathcal{P}_{+}}}) 
		\to \Gr_{2k}^{W}E_{Z}^{2k-1} $ is surjective. 

		Consider $ \bigoplus_{k_{1}=0}
		H_{1}\left(K(\mathbb{Q}[\Sigma_{M^{Z},\emptyset}])\right)_{k_1}
					\otimes H_{0}\left(K(\mathbb{Q}[\Sigma_{M_{Z}}]) \right)_{k - k_{1}}  $ in the decomposition
		\eqref{eq:decomposition}. By Lemma \ref{lem:koszulH1}, this is generated by
		elements
		$ \prod_{k \in B \setminus i_{B}}
		x_{\rho_{k}} \otimes (\ell_{i_{B}} - \ell_{j_{B}}) \cdot \xi $,
		where $ B $ is a basis of $M^{Z} $, $ j_{B} \in Z \setminus B $,
		$ i_{B} \in C_{B \cup j_{B}} $, and $ \xi \in  
			H_{0}\left(K(\mathbb{Q}[\Sigma_{M_{Z}}]) \right) $.
		We view $ \prod_{k \in B \setminus i_{B}}
		x_{\rho_{k}} \otimes (\ell_{i_{B}} - \ell_{j_{B}}) \cdot \xi $
		as an element in $ K_{\bullet}(\mathbb{Q}[\Sigma_{M, \mathcal{P}_{+}}]) $ 
		where it is also a cycle. Thus, this piece of the decomposition
		is in the image of $ \Gr^{W}_{2k}H^{2k-1}(X_{\Sigma_{M, \mathcal{P}_{+}}}) $.

		Let $C := \bigoplus_{k_{2} \geq 0} 
		H_{0}\left(K(\mathbb{Q}[\Sigma_{M^{Z},\emptyset}])\right)_{k_1}
		\otimes H_{1}\left(K(\mathbb{Q}[\Sigma_{M_{Z}}]) \right)_{k - k_{2}} 
		/ 1 \otimes H_{1}(K(\mathbb{Q}[\Sigma_{M_{Z}}]))_{k} $ in the 
		decomposition \eqref{eq:decomposition}. 
		Because $ H_{0}(K(\mathbb{Q}[\Sigma_{M^{Z}, \emptyset}])) $ is generated  by elements
		$ x_{\rho_{i}}^{j} $ for $i \in Z $ and $ 0 \leq j \leq \rk(Z) -1 $ (cf. \cite{AHK}*{\S6.1}),
		$C$ is generated by the classes of elements
		$ x_{\rho_{i}}^{j} \cdot \xi $ where $ i \in Z $, $ 1\leq j \leq \rk(Z) -1 $, and $ \xi \in 
		H_{1}(K(\mathbb{Q}[\Sigma_{M_{Z}}])) $. The elements
		$ x_{\rho_{Z}}^{j} \cdot \xi $ are cycles in 
		$H_{1}(K(\mathbb{Q}[\Sigma_{M, \mathcal{P}_{+}}])) $, and we claim that 
		the span of such cycles surjects onto	$ C $. 
		To see this, note that the map $ H_{1}(K(\mathbb{Q}[\Sigma_{M, \mathcal{P}_{+}}])) 
		\to C $ factors through $ H_{1}(K(\mathbb{Q}[\overline{\Star}(\rho_{Z})]))  $.  
		We may find $ \ell \in
		\Sym(N_{\mathbb{Q}}^{\vee}) $ such that, in $K(\mathbb{Q}[\overline{\Star}(\rho_{Z})]) $, 
		$ d \ell = x_{\rho_{i}} + x_{\rho_{Z}} +
		\mathbf{y} $ for $ \mathbf{y} \in \mathbb{Q}[\Sigma_{M_{Z}}] $. 
		Then, $ d (\ell \cdot \xi) = x_{\rho_{i}} \cdot \xi + x_{\rho_{Z}} \cdot \xi + \mathbf{y} 
		\cdot \xi $, and this shows that the classes of $ x_{\rho_{Z}} \cdot \xi $
		and $ -x_{\rho_{i}} \cdot \xi $ are homologous in  $ C $. 
		Thus, $ x_{\rho_{i}} \cdot \xi $ is in the image of 
		$ H_{1}(K(\mathbb{Q}[\Sigma_{M, \mathcal{P}_{+}}])) $.
		Now we induct on $j$. 
		In $ H_{1}(K(\mathbb{Q}[\overline{\Star}(\rho_{Z})])) $, $ d \left( \sum_{m=1}^{j} (-1)^{m}
		x_{\rho_{Z}}^{j-m} (x_{\rho_{i}} + \mathbf{y})^{m-1} \cdot \ell \cdot \xi \right) $ 
		shows that the class of
		$ \pm x_{\rho_{i}}^{j} \cdot \xi $ is homologous to the class of 
		$ x_{\rho_{Z}}^{j} \cdot \xi $ up to the sum of classes of $ x_{\rho_{i}}^{m} \cdot \xi_{m} $
		for $ m < j $ and $ \xi_{m} $ cycles in $ H_{1}(K(\mathbb{Q}[\Sigma_{M_{Z}}])) $.
		By induction, these are all in the image of the subspace of
		$ H_{1}(K(\mathbb{Q}[\Sigma_{M, \mathcal{P}_{+}}])) $ spanned by the 
		$ x_{\rho_{Z}}^{j} \cdot \xi $. This proves our claim.

		Thus,
		$\Gr_{2k}^{W} H^{2k-1}(X_{\Sigma_{M, \mathcal{P}_{+}}}) 
		\to \Gr_{2k}^{W}E_{Z}^{2k-1} $ is surjective, and this completes the proof.
	\end{proof}

		As a consequence of the vanishing of these boundary maps, we see that 
		matroidal flips induce a short exact sequence on Chow rings.

		\begin{prop} \label{prop:SESChow}
			Let $M$ be a loopless matroid and $ \Sigma_{M, \mathcal{P}_{-}}
			\rightsquigarrow \Sigma_{M, \mathcal{P}_{+}} $ a matroidal 
			flip with center $ Z = \mathcal{P}_{+} \setminus \mathcal{P}_{-} $. 
			Then, there is a short exact sequence 
			\begin{equation} \label{eq:SESChow}
				0 \to A^{\bullet}\left(M, \mathcal{P}_{-}\right) \to
							A^{\bullet}\left(M, \mathcal{P}_{+}\right) \to 
							A^{>0}\left(M^{Z}, \emptyset\right)
							\otimes A^{\bullet}\left(M_{Z}\right) \to 0.
			\end{equation}
		\end{prop}

		\begin{proof}
			From Lemma \ref{lem:LESMatroidalFlip}, we have the exact sequence
			\begin{center}
							\begin{tikzcd}[cramped,column sep = tiny]
					\Gr_{2k}^{W}E^{2k-1}_{Z} \rar & 
					\Gr_{2k}^{W}H^{2k}\left(X_{\Sigma_{M, \mathcal{P}_{-}}}\right)  \rar
					& 
					\Gr_{2k}^{W}H^{2k}\left(X_{\Sigma_{M, \mathcal{P}_{+}}}\right) \rar
								& \Gr^{W}_{2k} E^{2k}_{Z}  \rar &
					\Gr_{2k}^{W}H^{2k+1}\left( X_{\Sigma_{M,\mathcal{P}_{-}}} \right).
				\end{tikzcd}
			\end{center}
			As $ X_{\Sigma_{M, \mathcal{P}_{-}}} $ is smooth,
			$ \Gr_{2k}^{W}H^{2k+1}( X_{\Sigma_{M,\mathcal{P}_{-}}})  $ vanishes. 
			The left-most map  vanishes
			by Lemma \ref{lem:vanishingBoundary}.
			Therefore, the  exact sequence above
			reduces to 
			\begin{center}
							\begin{tikzcd}[cramped, column sep = small, row sep = small]
					0 \rar & \Gr_{2k}^{W}H^{2k}\left(X_{\Sigma_{M, \mathcal{P}_{-}}}\right)  
							\rar  \dar[equals]& 
							\Gr_{2k}^{W}H^{2k}\left(X_{\Sigma_{M, \mathcal{P}_{+}}}\right) \rar 
									\dar[equals]
								& \Gr^{W}_{2k} E^{2k}_{Z}  \rar  \dar[equals]& 0  \\
							& A^{k}\left(M, \mathcal{P}_{-}\right) & A^{k}\left(M,\mathcal{P}_{+}
							\right) & \displaystyle 
							\bigoplus_{\substack{k_{1} + k_{2} = k\\ k_{1} > 0 }}
							A^{k_{1}}\left(M^{Z}, \emptyset \right) \otimes 
							A^{k_{2}} \left(M_{Z}\right) &
			\end{tikzcd} 
			\end{center}
			which completes the proof.
			\end{proof}

			As an corollary, we recover the decomposition of the
			Chow ring from \cite{AHK}*{Theorem 6.18}, which we state
			below after introducing some notation.

			\begin{definition}
				In the notation of \cite{AHK}, let $ \Phi_{Z}^{\bullet}\colon
				A^{\bullet}(M, \mathcal{P}_{-}) \to A^{\bullet}(M, \mathcal{P}_{+}) $
				be the map defined by $ \Phi_{Z}^{\bullet}(x_{\rho_{i}}) = 
				x_{\rho_{i}} + x_{\rho_{Z}} $ for $ i \in Z $ and 
				$ \Phi_{Z}^{\bullet}(x) =x $ for all other indeterminates.

				For $ p \geq  1 $, define
				$\Psi^{p, \bullet}_{Z}\colon A^{\bullet - p}(M_{Z}) \to  A^{\bullet}(M, \mathcal{P}_{+})$
				by $\xi \mapsto x_{\rho_{Z}}^{p} \cdot \xi $.
			\end{definition}

			\begin{cor}[\cite{AHK}*{Theorem 6.18}]
				Let $M$ be a loopless matroid, and let
				$ \Sigma_{M, \mathcal{P}_{-}} \rightsquigarrow \Sigma_{M, \mathcal{P}_{+}} $ 
				be a matroidal flip with center $ Z $. 
				Then,  for all $ q $, the map
				$\Phi^{q}_{Z} \oplus 
						\bigoplus_{p=1}^{\rk(Z) - 1} \Psi_{Z}^{p,q} $  is an isomorphism.
			\end{cor}

			\begin{proof}
				By Theorem \ref{thm:franzFu}, $ \Phi_{Z}^{q} $ is the inclusion
				$ A^{q}(M, \mathcal{P}_{-}) \hookrightarrow A^{q}(M, \mathcal{P}_{+}) $. 
				On the other hand, the image of $ \bigoplus_{p=1}^{\rk(Z)-1} \Psi^{p,\bullet}_{Z} $
				surjects onto $ \bigoplus_{m =1}^{q} A^{m}(M^{Z}, \emptyset) \otimes A^{q-m}(M_{Z}) $
				under the map in \eqref{eq:SESChow}. The proof of this second fact is verbatim the
				second half of the proof of Lemma \ref{lem:vanishingBoundary}, replacing
				$ \xi \in H_{1}(K(\mathbb{Q}[\Sigma_{M_{Z}}])) $ with 
				$ \xi \in \mathbb{Q}[\Sigma_{M_{Z}}] $.
				This shows that $\Phi^{q}_{Z} \oplus 
				\bigoplus_{p=1}^{\rk(Z) - 1} \Psi_{Z}^{p,q} $  is surjective. By dimension, it 
				must be an isomorphism.
			\end{proof}

		\section{The singular cohomology ring of a uniform matroid}\label{section:uniform}
		In this section, we show that the boundary maps in the long exact sequence of 
		a matroidal flip always vanish when the center of the matroidal flip is 
		an independent flat. This is always the case for uniform matroids, whence we determine
		a recursive formula for $ \HPoin(X_{\Sigma_{U_{r,n}, \emptyset}}) $.

		Let $ \Sigma_{M, \mathcal{P}_{-}} \rightsquigarrow
		\Sigma_{M, \mathcal{P}_{+}} $ be a matroidal flip with center
		$ Z = \mathcal{P}_{+} \setminus \mathcal{P}_{-} $.
		In Section \ref{section:flips} we described how $ \Sigma_{M, \mathcal{P}_{+}} $ 
		is constructed from $ \Sigma_{M, \mathcal{P}_{-}} $ by first taking the stellar
		subdivision along $ \sigma_{Z < \emptyset} $ and then removing all cones
		$ \sigma_{I < \mathcal{F}} $ such that $ \closure(I) = Z $. 
		When $ Z $ is an independent flat, the only such cones are
		$ \sigma_{Z< \mathcal{F}} $; these are already removed in the stellar subdivision.
		In other words, whenever a matroidal flip $ \Sigma_{M, \mathcal{P}_{-}} \rightsquigarrow
		\Sigma_{M, \mathcal{P}_{+}} $ has an independent flat as a center,
		the matroidal flip is a stellar subdivision, and the induced map
		$ X_{\Sigma_{M, \mathcal{P}_{+}}} \to X_{\Sigma_{M, \mathcal{P}_{-}}} $
		is a blow-down. This means that the induced map on cohomology,
		$ H^{\bullet}(X_{\Sigma_{M, \mathcal{P}_{-}}}) \to H^{\bullet}
		(X_{\Sigma_{M, \mathcal{P}_{+}}}), $ is injective (Theorem \ref{thm:blowup}).

		\begin{cor}
			Let $M$ be a uniform matroid and let $ \Sigma_{M, \mathcal{P}_{-}} 
			\rightsquigarrow	\Sigma_{M, \mathcal{P}_{+}} $ be a matroidal flip
			with center $ Z = \mathcal{P}_{+} \setminus \mathcal{P}_{-} $.
			Then, for all $j,k 
			\in \mathbb{Z} $, we have the short exact sequences
			\begin{equation*}
				0 \to H^{k}\left( X_{\Sigma_{M, \mathcal{P}_{-}}} \right)
							\to H^{k}\left( X_{\Sigma_{M, \mathcal{P}_{+}}} \right)
							\to E^{k}_{Z} \to 0
			\end{equation*}
			and
			\begin{equation}\label{eq:SESUniformMatroidalFlip}
							0 \to \Gr_{j}^{W} H^{k}\left( X_{\Sigma_{M, \mathcal{P}_{-}}} \right)
							\to \Gr_{j}^{W} H^{k}\left( X_{\Sigma_{M, \mathcal{P}_{+}}} \right)
							\to \Gr_{j}^{W} E^{k}_{Z} \to 0.
			\end{equation}
		\end{cor}

		\begin{proof}
			In a uniform matroid, any proper flat is independent, so the
			center of any matroidal flip is independent. By the 
			discussion above, this implies that the long exact sequence of a 
			matroidal flip splits into short exact sequences.
		\end{proof}

		These short exact sequences produce the following computation of $ \HPoin(X_{\Sigma_{U_{r,n}}}) $.

		\begin{thm}\label{thm:uniform}
			Let $ U_{r,n} $ be the uniform matroid of rank $ r $ on the ground set $[n]$. Then,
			\begin{align*}
			\HPoin\left( X_{\Sigma_{U_{r,n}}} \right) &= \HPoin
			\left( X_{\Sigma_{U_{r,n}, \emptyset}} \right) 
			+ \sum_{i=1}^{r-1} \binom{n}{i} \frac{x-x^{i}}{1-x}
			\HPoin\left( X_{\Sigma_{U_{r-i,n-i}}} \right) \\
							&= 	\sum_{i=0}^{r-1} x^{i} + \sum_{k=0}^{n-r-1} \binom{r+k}{r-1} \cdot 
				wx^{r}(1+wx)^{k} + \sum_{i=1}^{r-1} \binom{n}{i} \frac{x-x^{i}}{1-x}
			\HPoin\left( X_{\Sigma_{U_{r-i,n-i}}} \right)
			\end{align*}
		\end{thm}

		\begin{proof}
			Take a sequence of matroidal flips interpolating between
			$ \Sigma_{U_{r,n},\emptyset} $ and $ \Sigma_{U_{r,n}} $.
			No matter how we choose this, each  $ Z \in \widehat{\mathcal{L}}(U_{r,n} $
			occurs as the center of exactly one of the matroidal flips.
			We then iteratively apply the short exact sequences 
			\eqref{eq:SESUniformMatroidalFlip} to get
			\[ 
					\HPoin\left(X_{\Sigma_{U_{r,n}}} \right) =
					\HPoin\left( X_{\Sigma_{U_{r,n}, \emptyset}} \right) 
						+ \sum_{Z \in \widehat{\mathcal{L}}(U_{r,n})} \HPoin\left( E_{Z}^{\bullet}
						\right).
			\]
			From the definition of $ E_{Z}^{\bullet} $ and the fact that
			$ \dim H^{0}(X_{\Sigma_{U_{r,n}^{Z}, \emptyset}}) = 1 $,
			\[ 
					\HPoin\left(E^{\bullet}_{Z} \right) = \left[
									\HPoin\left(X_{\Sigma_{U_{r,n}^{Z}, \emptyset}}\right)-1
									\right] \cdot \HPoin\left( X_{\Sigma_{(U_{r,n})_{Z}}} \right).
			\]
			When $ \left| Z \right| = i $, $ U_{r,n}^{Z} \cong U_{i,i} $, and
						$ (U_{r,n})_{Z} \cong U_{r-i,n-i} $. In this case,
			\[ 
					\HPoin\left(X_{\Sigma_{U_{r,n}^{Z}, \emptyset}} \right) -1 
						= \frac{x-x^{i}}{1-x.}
			\]
			For $ 1 \leq i \leq r-1 $, there are $ \binom{n}{i} $ flats $ Z \in 
			\widehat{\mathcal{L}}(U_{r,n}) $ of cardinality $ i $.
			Thus,
			\[ 
					 \sum_{Z \in \widehat{\mathcal{L}}(U_{r,n})} \HPoin\left( E_{Z}^{\bullet}
						\right) = \sum_{i=1}^{r-1} \binom{n}{i} \frac{x-x^{i}}{1-x}
						\HPoin\left( X_{\Sigma_{U_{r-i,n-i}}} \right).
			\]
			This proves the first equality. The second follows from the computation of
			$ \HPoin(X_{\Sigma_{U_{r,n}, \emptyset}}) $ in Example	\ref{ex:uniformEmpty}.
		\end{proof}
		
	\section{Singular cohomology for arbitrary building sets}\label{section:buildingSets}
			We now generalize our results for the singular cohomology
			ring of a matroid to arbitrary building sets on $ \mathcal{L}(M) $. Specifically, given two 
			building sets $ \mathcal{F} $ and $ \mathcal{G} $ on $ \mathcal{L}(M) $ containing
			$[n]$, we give an explicit 
			formula relating $ \HPoin(X_{\Sigma_{M, \mathcal{F}}}) $ and $ 
			\HPoin(X_{\Sigma_{M, \mathcal{G}}}) $. From this we derive vanishing 
			results for  $ H^{\bullet}(X_{\Sigma_{M, \mathcal{F}}}) $ from those
			for the cohomology of $ H^{\bullet}(X_{\Sigma_{M}}) $.

			Let $ \mathcal{F} $ and $ \mathcal{G} $ be two building sets on $ \mathcal{L}(M) $ 
			containing the ground set $ [n] $. 
			By \cite{FM}*{Theorem 4.2}, there is a sequence of building sets
			\[ \mathcal{F} =\mathcal{F}_{0} \subseteq  \cdots  \subseteq
			\mathcal{F}_{s} = \mathcal{G}_{\max} = \mathcal{H}_{0} \supseteq \cdots
			\supseteq \mathcal{H}_{m} = \mathcal{G}\]
			such that consecutive building sets differ by a single flat. This 
			defines modifications of Bergman fans
			\[ 
				\Sigma_{M, \mathcal{F}} = \Sigma_{M, \mathcal{F}_{0}}  \rightsquigarrow  \cdots 
			\rightsquigarrow \Sigma_{M, \mathcal{F}_{s}} = 
			\Sigma_{M,\mathcal{G}_{\max}} = \Sigma_{M,\mathcal{H}_{0}}\leftsquigarrow \cdots
			\leftsquigarrow  \Sigma_{M, \mathcal{H}_{m}} = \Sigma_{M,\mathcal{G}}
			\]
			and morphisms of toric varieties
			\begin{equation}\label{eq:buildingFactorization}
				{\Sigma_{M, \mathcal{F}}} = X_{\Sigma_{M, \mathcal{F}_{0}}}  \leftarrow  \cdots 
				\leftarrow
				X_{\Sigma_{M, \mathcal{F}_{s}}} = 
				X_{\Sigma_{M,\mathcal{G}_{\max}}} = X_{\Sigma_{M,\mathcal{H}_{0}}}\rightarrow \cdots
				\rightarrow  X_{\Sigma_{M, \mathcal{H}_{m}}} = X_{\Sigma_{M,\mathcal{G}}}.
			\end{equation}
			In contrast to the matroidal flips in Section \ref{section:flips}, 
			each of these maps is truly a blow-down.

			To compute how cohomology changes along these maps, we recall that building sets 
			behave well with respect to the operations of restriction and 
			contraction in matroids.

			\begin{definition}
				Let $M$ be a matroid,
				and let $ \mathcal{G}$ be a building set on $ \mathcal{L}(M) $.
				For a flat $ F \in \mathcal{L}(M) $, we define sets
				\[ 
					 \mathcal{G}^{F} = \left\{ G \in \mathcal{G} : G \leq F \right\},	
				\]
				and
				\[ 
							\mathcal{G}_{F} = \left\{ G \vee F : G \in \mathcal{G} 
							\textrm{ and } G \not\leq F \right\}.
				\]
				By \cite{wondertopes}*{Proposition A.8}, $ \mathcal{G}^{F} $ 
				and $ \mathcal{G}_{F} $ are building sets on
				$ \mathcal{L}(M^{F}) $ and $ \mathcal{L}(M_{F}) $, respectively.
			\end{definition}

			By the factorization in \eqref{eq:buildingFactorization}, the following theorem
			suffices to relate $ \HPoin(X_{\Sigma_{M, \mathcal{F}}}) $ and 
			$ \HPoin(X_{\Sigma_{M, \mathcal{G}}}) $. 

			\begin{thm}\label{thm:changeBuildingSet}
				Let $M$ be a loopless matroid on the ground set $[n]$, and let 
				$ \mathcal{H} $ and $ \mathcal{G} $ be two building sets on
				$ \mathcal{L}(M) $ which both contain $[n]$ 
				and whose difference is a single flat $ Z =
				\mathcal{H} \setminus \mathcal{G} $. 
							Write $ \left\{ F_{1}, \dots, F_{s} \right\} = f(\mathcal{G}{\big|}_{Z}) $ for 
				the factors of $ Z $ in $ \mathcal{G} $.
				Then,
				\[ 
						\HPoin\left(X_{\Sigma_{M, \mathcal{H}}}\right) 
							= \HPoin\left(X_{\Sigma_{M, \mathcal{G}}} \right)
								+ \frac{1 - x^{s}}{1-x}
								\cdot \prod_{i=1}^{s} 
								\HPoin\left(X_{\Sigma_{M^{F_{i}}, \mathcal{G}^{F_{i}}}}
								\right) \cdot \HPoin\left( X_{\Sigma_{M_{Z}},
									\mathcal{G}_{Z}} \right). 				\]
			\end{thm}

			\begin{proof}
				By \cite{FM}*{Theorem 4.2}, $ X_{\Sigma_{M, \mathcal{H}}} $
				is the blow-up of $ X_{\Sigma_{M, \mathcal{G}}} $ along
				$ V(\tau) $, where $ \tau = \pos(F_{1}, \dots, F_{s}) $. 
				By \cite{EFMPV}*{Lemma 5.2}, we have the combinatorial isomorphism
				\begin{equation}\label{eq:isomorphismSimplicial}
						\Star(\tau) \cong 
						\prod_{i=1}^{s} 
						\Sigma_{M^{F_{i}}, \mathcal{G}^{F_{i}}} \times 
						\Sigma_{M_{Z}, \mathcal{G}_{Z}},
				\end{equation}
				and we must extend this to an isomorphism of rational fans, so that
				\[ 
							X_{\Star(\tau)} \cong
							\prod_{i=1}^{s}  X_{\Sigma_{M^{F_{i}}, \mathcal{G}^{F_{i}}}} \times
							X_{\Sigma_{M_{Z}, \mathcal{G}_{Z}}}.
				\]
			  The result then follows from Corollary \ref{cor:blowup} and the 
				K\"{u}nneth formula.

				For the isomorphism of rational fans, note that rays in $ \Star(\tau) $ correspond to 
				flats $ G \in \mathcal{G} $ satisfying either
					\begin{enumerate}
						\item $ G \leq F_{i} $ for some $ i$, or
						\item $ \left\{ G, Z \right\} $ is a nested set for $ \mathcal{H} $.
					\end{enumerate}
				The isomorphism in \eqref{eq:isomorphismSimplicial} 
				sends a ray $ \rho_{G} $ to $ \rho_{G} \in \Sigma_{M^{F_{i}},
							\mathcal{G}^{F_{i}}} $ for $ G$ satisfying condition (1) or to 
							$ \rho_{G \vee Z \setminus Z} \in \Sigma_{M_{Z}, \mathcal{G}_{Z}} $ 			   
							for $ G $ satisfying condition (2).
				In this last case, basic properties of nested sets imply that 
					\begin{equation}\label{eq:joinVersusUnion}
						G  = \begin{cases}
										G \vee Z \setminus Z & \textrm{if $ G \cap Z = \emptyset $} \\
										Z \cup (G \vee Z \setminus Z) & \textrm{otherwise}
									\end{cases}
					\end{equation}
				as subsets of $[n]$. 

				By construction, $ \Star(\tau) $ lives in 
				$ \mathbb{Z}^{n}/\left(e_{[n]}, e_{F_{1}}, \dots, e_{F_{s}} \right) 
				= \mathbb{Z}^{n}/ \left( e_{[n] \setminus Z}, e_{F_{1}}, \dots,
					e_{F_{s}} \right)$, while 
					$\prod_{i=1}^{s} \Sigma_{M^{F_{i}}, \mathcal{G}^{F_{i}}} 
						\times \Sigma_{M_{Z}, \mathcal{G}_{Z}} 
							$ lives in
				$ \prod_{i=1}^{s} \mathbb{Z}^{\left| F_{i} \right|} / (e_{F_{i}})  \times
					\mathbb{Z}^{n -  \left| Z \right|}/(e_{[n] \setminus Z }) $.
				(For a subset $ F \subseteq [n] $, we view $ \mathbb{Z}^{\left| F \right|}
					\subseteq \mathbb{Z}^{n} $ as the sublattice spanned by basis vectors
					$ e_{i} $ with $ i \in F $.)
				As $ F_{1}, \dots, F_{s} $ are the factors of $ Z $ in a building set,
				it is well-known that  
				$ Z = F_{1} \sqcup \cdots \sqcup F_{s} $ as subsets of $[n]$, 
				and from here it is clear  
				that the two lattices are naturally isomorphic. Under this map,
				$ \rho_{G} \in \Star(\tau) $ is sent to $ \rho_{G} \in 
				\Sigma_{M^{F_{i}}, \mathcal{G}^{F_{i}}} $ for $G$ satisfying
				condition (1). Using \eqref{eq:joinVersusUnion}, 
				$ \rho_{G} \in \Star(\tau) $ is sent to $ \rho_{G \vee Z \setminus Z}
				\in \Sigma_{M_{Z}, \mathcal{G}_{Z}} $
				for $ G $ satisfying condition (2). Therefore, the combinatorial
				isomorphism in \eqref{eq:isomorphismSimplicial} is an isomorphism
				of rational fans.
			\end{proof}

			\begin{rmk}
				If we specialize the Hodge--Poincar\'{e} polynomials in
				Theorem \ref{thm:changeBuildingSet} to $ w = 0 $, we
				recover \cite{EFMPV}*{Theorem 5.1}.
			\end{rmk}

			Finally, we see that the vanishing results for the singular cohomology 
			of a matroid transfer to the singular cohomology ring of
			$ X_{\Sigma_{M, \mathcal{F}}} $ for arbitrary building sets.

			\begin{cor}
					Let $ M$ be a loopless matroid of rank $ r $ on the ground set $[n]$,
					and let $ \mathcal{G} $ be a building set on $ \mathcal{L}(M) $
					which contains $[n]$.
					Then,
					\begin{enumerate}
						\item $ \Gr_{j}^{W} H^{k}(X_{\Sigma_{M,\mathcal{G}}}) = 0 $ whenever
								$ j > n -r+k $ or $ j < 2k -2r + 2 $.
						\item $ H^{k}(X_{\Sigma_{M,\mathcal{G}}}) = 0 $ 
										whenever $ k > n + r - 2 $.
					\end{enumerate}
							Moreover, $ \Gr^{W}_{2n-2} H^{n+r-2}(X_{\Sigma_{M,\mathcal{G}}}) $ is 
							the top-weight cohomology, and its dimension is
							$ \mu(M) $.
			\end{cor}

			\begin{proof}
				By \cite{FM}*{Theorem 4.2} there is a sequence of 
				building sets $ \mathcal{G} = \mathcal{G}_{s} \subseteq
				\cdots \subseteq \mathcal{G}_{0} = \mathcal{G}_{\max} $ such that
				consecutive pairs differ by a single flat. The proof is a nested induction
				on the rank of $M$ and the index of building sets. 
				The base case for rank is when $ \rk(M) = 1 $, in which case the
				only building set is the maximal one, so the statement follows from
				Theorem \ref{thm:vanishingCohomology}. The base case for building sets is 
				$ \mathcal{G}_{\max} $ which is also Theorem 
				\ref{thm:vanishingCohomology}.

				The proof of the inductive step is analogous to that in Theorem 
				\ref{thm:vanishingCohomology}, and we omit it.
			\end{proof}

		\appendix

		\section{Cohomology of toric blow-ups}\label{section:blowups}

			In this appendix, we work out the cohomology of smooth toric
			blow-ups through explicit computations in Koszul homology. Our approach follows
			\cite{griffithsHarris}*{Chapter 4.6}. 

			Fix $ \Sigma \subseteq N_{\mathbb{Q}} $ a rational unimodular fan and a cone $ \tau \in
			\Sigma $. 
			Combinatorially, the blow-up of $ X_{\Sigma} $ along $ V(\tau) $ 
			corresponds to the \textit{stellar subdivision} of $ \Sigma $ 
			along $ \tau $.

		\begin{definition}\label{def:stellarSubdivision}
			Let $ \tau(1) = \left\{ \rho_{1}, \dots, \rho_{k} \right\}  $ be the rays of $ \tau $ and
			$ \left\{ u_{1}, \dots, u_{k} \right\} $ their primitive generators.
			We define the ray $ \rho_{\tau} := \pos(u_{1} + \cdots + u_{k}) $.

			For a cone $ \sigma  \in \Sigma $, define the subdivision of $ \sigma $
			\[ 
					\Sigma_{\sigma}^{*}(\tau) := \begin{cases}
					\left\{ \sigma \right\} & \textrm{if $\tau \not\leq \sigma $} \\
									\left\{ \pos\left(\sigma(1) \setminus \rho_{i}, \rho_{\tau}
									\right) : 1 \leq i \leq k \right\} & \textrm{if 
									$ \tau \leq \sigma. $}
					\end{cases}
			\]
			The \emph{stellar subdivision} of $ \Sigma $ along  $ \tau $
			is the rational fan
						\[ \Sigma^{*}(\tau)  := \bigcup_{\sigma \in \Sigma}
						\Sigma_{\sigma}^{*}(\tau).\]
			See Figure \ref{fig:fanSubdivision} for an example.
		\end{definition}

		\begin{figure}
			\centering
    \begin{tikzpicture}[scale=2]
        \filldraw[gray] (0,0) -- (.9,0) -- (0,.9) -- cycle;
        \filldraw[gray] (0,0) -- (.9,0) -- (-.56,-.56) -- cycle;
        \filldraw[gray] (0,0) -- (0,.9) -- (-.56,-.56) -- cycle;
						\draw[->] (0,0) -- (1,0) node[anchor = west] {$ \rho_{1}$};
						\draw[->] (0,0) -- (0,1) node[anchor = south] {$\rho_{2} $};
						\draw[->] (0,0) -- (-.66,-.66) node[anchor = north east]
						{$ \rho_{3}$};
    \end{tikzpicture}\hspace{4em}%
     \begin{tikzpicture}[scale=2]
        \filldraw[gray] (0,0) -- (.56,.56) -- (0,.9) -- cycle;
        \filldraw[gray] (0,0) -- (.9,0) -- (.56,.56) -- cycle;
        \filldraw[gray] (0,0) -- (.9,0) -- (-.56,-.56) -- cycle;
        \filldraw[gray] (0,0) -- (0,.9) -- (-.56,-.56) -- cycle;
						 \draw[->] (0,0) -- (1,0) node[anchor =west] {$\rho_{1}$};
						 \draw[->] (0,0) -- (0,1) node[anchor = south] {$\rho_{2}$};
						 \draw[->] (0,0) -- (.66,.66) node [anchor = south west]
						 {$ \rho_{\tau} $};
						 \draw[->] (0,0) -- (-.66,-.66) node [anchor = north east]
						 {$ \rho_{3}$};
    \end{tikzpicture}
						\caption{$ \Sigma_{U_{3,3}, \emptyset} $ and the stellar subdivision
						along $ \tau = \pos(\rho_{1}, \rho_{2}) $.}
    \label{fig:fanSubdivision}
		\end{figure}

			As $ \Sigma^{*}(\tau) $ subdivides $ \Sigma $,
			the identity $N \to N $ defines a 
			toric morphism $ \pi_{\tau}\colon X_{\Sigma^{*}(\tau)}
			\to X_{\Sigma} $ which is the \emph{blow-down map}.
			The \emph{exceptional divisor} of the blow-up is 
			$ V(\rho_{\tau}) = X_{\Star(\rho_{\tau})} $. The
			\emph{center} of the blow-up is
			$ V(\tau) = X_{\Star(\tau)} $. 

			The cohomology of the blow-up can be expressed in terms of the cohomology of the base space,
			center, and exceptional divisor.

		\begin{thm}\label{thm:blowup}
			Let $ \Sigma $ be a unimodular fan, and fix a cone
			$ \tau \in \Sigma $. For each $i \in \mathbb{Z} $, 
			there is an exact sequence
			of vector spaces
			\[ 
						0 \to H^{i}\left( X_{\Sigma}\right) 
						\to H^{i}\left(X_{\Sigma^{*}(\tau)}\right) 
						\to \bigoplus_{i=1}^{\dim \tau-1} 
						H^{\bullet -2i}(X_{\Star(\tau)})
						\to 0.
			\]
		\end{thm}

			To prove this, we begin by computing the cohomology of the exceptional divisor and 
			comparing it to the cohomology of the center. Then, we produce the desired exact
			sequences by comparing Mayer--Vietoris exact sequences of the blow-up and base space.

			The following two lemmas are general facts about Koszul homology and the cohomology 
			of smooth toric varieties.
			The first lemma compares the cohomology of the star
			and closed star of a cone $ \sigma \in \Sigma $ (Definition \ref{def:star}).
			Note that 
			$ \mathbb{Q}[\overline{\Star}(\sigma)] \cong \mathbb{Q}[\Star(\sigma)][x_{\rho} : \rho \in
			\sigma(1)] $, so we may view $ \mathbb{Q}[\Star(\sigma)] \subseteq \mathbb{Q}[\overline{\Star}
			(\sigma)] $.
			The projection $ N \to N/ \spann(\sigma) $ induces
			a toric morphism $ \pi \colon X_{\overline{\Star}(\sigma)} \to X_{\Star(\sigma)} $.

		\begin{lem}\label{lem:starIsomorphism}
			The map $ \pi^{*}\colon H^{\bullet}( X_{\Star(\sigma)})
					\to H^{\bullet} ( X_{\overline{\Star}(\sigma)}) $  is an isomorphism.
		\end{lem}

		\begin{proof}
			Let $ \sigma(1) = \left\{ \rho_{1}, \dots, \rho_{k} \right\} $. By unimodularity,
			we can choose  $ \left\{ \ell_{1}, \dots, \ell_{k} \right\} 
			\subseteq N^{\vee} $ such that $ \langle \ell_{i}, u_{\rho_{j}}
			\rangle = \delta_{i,j} $, the Kronecker delta function.
			This induces a splitting
			$ N^{\vee} = ( N / \spann(\sigma))^{\vee} \oplus
			\mathbb{Z} \ell_{1} \oplus \cdots \oplus \mathbb{Z} \ell_{k} $. 
			By Theorem \ref{thm:franzFu}, $ \pi^{*} $ is represented in Koszul homology
			by the inclusion of complexes
			\[ 
				K_{\bullet}(\mathbb{Q}[\Star(\sigma)]) =
					\mathbb{Q}[\Star(\sigma)] \otimes \bigwedge^{\bullet} 
					(N / \spann(\sigma))^{\vee}_{\mathbb{Q}} \hookrightarrow
						\mathbb{Q}[\overline{\Star}(\sigma)] 
						\otimes \bigwedge^{\bullet} N_{\mathbb{Q}}^{\vee}
				= K_{\bullet}(\mathbb{Q}[\overline{\Star}(\sigma)]).
			\]
			We prove this is a quasi-isomorphism by showing
			that the quotient complex $ Q_{\bullet} $ is acyclic.

			For notation, write
			$ x^{I} = \prod_{i=1}^{k} x_{\rho_{i}}^{a_{i}} $ for
			$ I = (a_{1}, \dots, a_{k}) $. Elements
			$ \alpha \in K_{\bullet}(\mathbb{Q}[\overline{\Star}(\sigma)]) $
			are written uniquely as
				\begin{equation}\label{eq:decompositionElement}
					\alpha = \sum_{I \in \mathbb{Z}_{\geq 0}^{k}} 
						x^{I} \cdot \xi_{I}, \;\;\;
					\textrm{with $ \xi  \in \mathbb{Q}[\Star(\sigma)] \otimes 
								\bigwedge^{\bullet} N_{\mathbb{Q}}^{\vee} $}.
		  \end{equation}
			Define
			$ \deg_{\sigma}(\alpha) = \max \left\{ I \in \mathbb{Z}_{\geq 0}^{k} : 
			\xi_{I} \neq 0 \right\} $, ordering $ \mathbb{Z}_{\geq 0}^{k} $ lexicographically.

			Fix a cycle $ \beta
			\in Q_{\bullet} $ which we will show is $0$. Choose a representative 
			$ \alpha \in K_{\bullet}(\mathbb{Q}[\overline{\Star}(\sigma)] $ such
			that $ \deg_{\sigma}(\alpha) $ is minimized. 
			For contradiction, assume that $ \deg_{\sigma}(\alpha) $ is nonzero, 
			and let $ i $ be the first index such that $ x_{\rho_{i}} $ divides $ x^{\deg_{\sigma}(\alpha)} $. 
			As $ \beta $ is a cycle, $ d \alpha \in K_{\bullet}(\mathbb{Q}[\Star(\sigma)])$. By the
			minimality of $ i$,
						\[ \deg_{\sigma}  \left(d \frac{x^{\deg_{\sigma}(\alpha)}}{x_{\rho_{i}}}
														\cdot \xi_{I} \right) < \deg_{\sigma}(\alpha), \] 
			where we write $ \alpha $ as in \eqref{eq:decompositionElement}, 
			Therefore, 
						\[ \deg_{\sigma} \left( \alpha - d 
						\frac{x^{\deg_{\sigma}(\alpha)}}{x_{\rho_{i}}} \cdot
						\ell_{i}\cdot \xi_{I} \right) < \deg_{\sigma}(\alpha), \]
			which is a contradiction.

			As $ \deg_{\sigma}(\alpha) = 0 $, $ \alpha $ is in the subspace
			$ \mathbb{Q}[\Star(\sigma)] \otimes \bigwedge^{\bullet}
			N_{\mathbb{Q}}^{\vee} $. If $ \alpha \notin 
			K_{\bullet}(\mathbb{Q}[\Star(\sigma)]) $, then $ d \alpha $ would
			contain terms involving $ x_{\rho_{i}} $ due to the presence of 
			some $ \ell_{i} $ in $ \alpha $. This is impossible
			as $ d \alpha \in K_{\bullet}(\mathbb{Q}[\Star(\sigma)]) $. 			
			We conclude that  $\alpha \in K_{\bullet}(\mathbb{Q}[\Star(\sigma)]) $
			and $ \beta = 0 $ in $ Q_{\bullet} $.
		\end{proof}

		The next general lemma asserts that we may use the following
		finite-dimensional \textit{square-free} Koszul complex to compute
		Koszul homology.

		\begin{definition}\label{def:squareFreeKoszul}
			Let $ \mathbb{Q}[\Sigma]^{sq} $ be the subspace of $ \mathbb{Q}[\Sigma] $ generated by 
			square-free monomials. Define
			\[ K^{sq}_{\bullet}(\mathbb{Q}[\Sigma]) := 
				 \left\{ \alpha \in \mathbb{Q}[\Sigma]^{sq} \otimes 
						\bigwedge^{\bullet} N_{\mathbb{Q}}^{\vee} :
						d \alpha \in \mathbb{Q}[\Sigma]^{sq} \otimes 
						\bigwedge^{\bullet} N_{\mathbb{Q}}^{\vee} \right\}. \]
			This is a finite-dimensional subcomplex of the Koszul complex.
		\end{definition}

		\begin{lem}\label{lem:squareFreeKoszul}
			The inclusion $ K^{sq}_{\bullet}(\mathbb{Q}[\Sigma]) \hookrightarrow
			K_{\bullet}(\mathbb{Q}[\Sigma]) $ is a quasi-isomorphism.
		\end{lem}

		\begin{proof}
			We will show that the quotient complex $ Q_{\bullet} =
			K_{\bullet}(\mathbb{Q}[\Sigma]) / K^{sq}_{\bullet}(\mathbb{Q}[\Sigma]) $  is acyclic.

			Order the rays of $\Sigma $ as $ \rho_{1}, \dots, \rho_{n} $. We write $ x^{I} = 
						\prod_{i=1}^{n} x_{\rho_{i}}^{a_{i}} $ for $I = (a_{1}, \dots, a_{n}) $. 
			The \textit{support} of $I$ is	$ \supp(I) := \pos(\rho_{i} : a_{i} \neq 0) $, 
			which may or may not be a cone in $ \Sigma $.
			Any element of $ K_{\bullet}(\mathbb{Q}[\Sigma]) $ can be written uniquely as
			\begin{equation}\label{eq:basisAlpha}
			\alpha = \sum_{\substack{I \in \mathbb{Z}_{\geq 0}^{n} \\ \supp(I) \in \Sigma}}
			x^{I} \otimes \xi_{I}, \;\;\; 
			\textrm{where $ \xi_{I} \in \bigwedge^{\bullet} N_{\mathbb{Q}}^{\vee} $}.
			\end{equation}
			Define $ \sqdeg(\alpha) = 
		 	\max	\left\{ (\max \{a_{1} -1, 0 \}, \dots, \max \{ a_{k}-1, 0\}) :
			\xi_{(a_{1}, \dots, a_{k})}\neq 0 \right\} $.
			
			Fix a cycle $ \beta \in Q_{\bullet} $ which we must show is $ 0 $.
			Choose a representative $ \alpha \in K_{\bullet}(\mathbb{Q}[\Sigma]) $
			which minimizes $ \sqdeg(\alpha) $.
			Suppose for contradiction that $ \sqdeg(\alpha) > 0 $.
			Define $ A  = \left\{ I : \sqdeg(x^{I}) = \sqdeg(\alpha) \textrm{ and } 
			\supp(I) \in \Sigma \right\} $, and
			let $ i $ be the first index at which $ \sqdeg(\alpha) $ is not zero. 
			For each $ I \in A $, we can find $ \ell_{I} \in N_{\mathbb{Q}}^{\vee} $ such 
			that $ d \ell_{I} = x_{\rho_{i}} + \mathbf{y} $  where $ \mathbf{y} $
			is a linear form not involving $ x_{\rho} $ for $ \rho \in \supp(I)(1) $. 
			This is possible because $ \Sigma $ is unimodular.
			Then, $ \alpha' := \alpha - d \left(\sum_{I \in A} \frac{x^{I}}{x_{\rho_{i}}}
			\cdot \ell_{I} \cdot \xi_{I} \right) $ is a representative of 
			 $ \beta $ with $ \sqdeg(\alpha') < \sqdeg(\alpha) $, which is a contradiction.

			Thus, $ \beta $ has a representative $ \alpha $ with $ \sqdeg(\alpha) = 0 $.
			This means that $ \alpha \in \mathbb{Q}[\Sigma]^{sq} \otimes 
			\bigwedge^{\bullet} N_{\mathbb{Q}}^{\vee} $. As $ \beta $ is a cycle,
			$ d \alpha \in \mathbb{Q}[\Sigma]^{sq} \otimes \bigwedge^{\bullet} N_{\mathbb{Q}}^{\vee} $
			as well. Therefore, $ \alpha \in K^{sq}_{\bullet}(\mathbb{Q}[\Sigma]) $ and
			$ \beta = 0 $.
		\end{proof}

			We now compare the cohomology of the exceptional divisor with that of the
			center.
			
			\begin{lem}\label{lem:cohomologyExceptionalDivisor}
			Let $ X_{\Star(\rho_{\tau})} $ be the exceptional divisor 
			of the blow-up of $ X_{\Sigma} $ along $ V(\tau) $. As 
			vector spaces, 
			\[ 
					H^{\bullet}\left( X_{\Star(\rho_{\tau})}\right) 
					\cong \bigoplus_{i=0}^{\dim \tau -1}  H^{\bullet - 2i}
						\left( X_{\Star(\tau)} \right).
			\]
			More explicitly, for any monomials $ \mathbf{x}_{i} 
			\in \mathbb{Q}[x_{\rho} : \rho \in \tau(1)] $ with $ \deg \mathbf{x}_{i} = i $,
			the inclusion
			\[ 
					\bigoplus_{i=0}^{\dim \tau -1} \mathbf{x}_{i} \cdot
					K_{\bullet}(\mathbb{Q}[\Star(\tau)]) \hookrightarrow
					K_{\bullet}(\mathbb{Q}[\Star(\rho_{\tau})])
			\]
			is a quasi-isomorphism.
		\end{lem}

		\begin{proof}
			Write $ \tau(1)=  \left\{ \rho_{1}, \dots, \rho_{k} \right\}$.
			Choose a splitting $N^{\vee} = (N/ \spann(\tau))^{\vee}
			\oplus \mathbb{Z} \ell_{1} \oplus \cdots \oplus \mathbb{Z} \ell_{k} $
			such that $ \langle \ell_{i}, u_{\rho_{j}} \rangle = \delta_{i,j} $.
			Then, $ (N/ \spann(\rho_{\tau}))^{\vee} =
			(N /\spann(\tau))^{\vee} \oplus \mathbb{Z} (\ell_{1}-\ell_{k})
			\oplus \cdots {\oplus \mathbb{Z}(\ell_{k-1}- \ell_{k})} $.

			We first reduce to $ \left\{ \mathbf{x}_{i} \right\}_{i=1}^{\dim \tau -1} 
			= \prod_{j=1}^{i} x_{\rho_{j}} $.
			Let $ \left\{ \mathbf{x}_{i} \right\}_{i} $ and $ \left\{ \mathbf{z}_{i} \right\}_{i} $
			be two choices for monomials. 
			As $ \dim \mathbf{x}_{i} \cdot H_{\bullet}(K(\mathbb{Q}[\Star(\tau)])) =
				\dim \mathbf{z}_{i} \cdot H_{\bullet}(K(\mathbb{Q}[\Star(\tau)])) $, it suffices
			to show the images of 
			$ \bigoplus_{i=0}^{\dim \tau -1} \mathbf{x}_{i} \cdot H_{\bullet}(K(\mathbb{Q}[\Star(\tau)])) $
			and 
			$ \bigoplus_{i=0}^{\dim \tau -1} \mathbf{z}_{i} \cdot H_{\bullet}(K(\mathbb{Q}[\Star(\tau)])) $
			are equal in
			$ H_{\bullet}(K(\mathbb{Q}[\Star(\rho_{\tau})])) $. Furthermore, it suffices to 
			show this when $ \mathbf{x}_{i}  = \mathbf{z}_{i} $ except at index $i_{0}$.
			Using the boundaries of $ (\ell_{j} - \ell_{k}) $, one verifies that
			$ \mathbf{x}_{i_{0}} $ is homologous to $ \mathbf{z}_{i_{0}}$ up to an element of 
			the form $ \sum_{i=1}^{i_{0}} \alpha_{i} \cdot \mathbf{x}_{i} + \mathbf{y} $ with 
			$ \alpha_{i} \in \mathbb{Q} $ and 
			$ \mathbf{y} \in \mathbb{Q}[\Star(\tau)] $.
			This shows
			$ \mathbf{x}_{i_{0}}\cdot H_{\bullet}(K(\mathbb{Q}[\Star(\tau)])) \subseteq
			\bigoplus_{i=0}^{i_{0}} \mathbf{z}_{i} \cdot H_{\bullet}(K(\mathbb{Q}[\Star(\tau)])) $
			as subspaces of $ H_{\bullet}(K(\mathbb{Q}[\Star(\rho_{\tau})])) $. As all other monomials
			in $ \left\{ \mathbf{x}_{i} \right\}_{i} $ and $ \left\{ \mathbf{z}_{i} \right\}_{i} $
			are equal,
			$\bigoplus_{i=0}^{\dim \tau -1} \mathbf{x}_{i}\cdot H_{\bullet}(K(\mathbb{Q}[\Star(\tau)])) 
			\subseteq
			\bigoplus_{i=0}^{\dim \tau - 1} \mathbf{z}_{i} \cdot H_{\bullet}(K(\mathbb{Q}[\Star(\tau)])) $.
			By symmetry, this implies equality.

			We now prove the statement for 	$ \left\{ \mathbf{x}_{i} \right\}_{i=1}^{\dim \tau -1} 
			= \prod_{j=1}^{i} x_{\rho_{j}} $. As these monomials are square-free, 
			we restrict to the square-free complexes by	Lemma \ref{lem:squareFreeKoszul}. 
			We will show that the cokernel complex $ Q_{\bullet} :=
			K^{sq}_{\bullet}(\mathbb{Q}[\Star(\rho_{\tau})])/ 
			\bigoplus_{i=0}^{\dim \tau -1} \mathbf{x}_{i} \cdot 	H_{\bullet}(K(\mathbb{Q}[\Star(\tau)])) $ 
			is acyclic.
						
			For a subset $ W \subsetneq [k] $, write $ x_{W} = \prod_{i \in W} x_{\rho_{i}} $.
			Any element of $ K^{sq}_{\bullet}(\mathbb{Q}[\Star(\rho_{\tau})]) $ can
			be written uniquely as
			\begin{equation}\label{eq:sumExceptional}
				\alpha = \sum_{W \subsetneq [k]}
				x_{W}\cdot \xi_{W}, \;\;\; \textrm{where $ \xi_{W} \in \mathbb{Q}[\Star(\tau)]^{sq}
				\otimes \bigwedge^{\bullet} N^{\vee} $.}
			\end{equation}
			Define $ \max(\alpha)  = \max\left\{ W : \xi_{W} \neq 0\right\} $, ordering
			subsets of $[k]$ lexicographically.
			We note $ \alpha \in \bigoplus_{i=0}^{\dim \tau -1} \mathbf{x}_{i} \cdot 
			H_{\bullet}(K^{sq}(\mathbb{Q}[\Star(\tau)])) $ if and only if $ d \alpha \in 
						\bigoplus_{i=0}^{\dim \tau -1} 
			\mathbf{x}_{i} \cdot H_{\bullet}(K^{sq}(\mathbb{Q}[\Star(\tau)])) $
			and $ \max(\alpha) \leq [k-1] $.
					
			Take a cycle $ \beta \in Q_{\bullet} $ which we must show is $0$. 			
			Choose a representative $ \alpha $ such that 
			$ \max(\alpha) $ is minimized.
			For contradiction, assume that $ \max(\alpha) > [k-1] $.
			Let $ W = \max(\alpha) $, $ i_{0} = \min([k] \setminus W) $, and $ j_{0} = \max(W) $.
			Note $ i_{0} $ exists because $ W \neq [k] $, and $ i_{0} < j_{0} $ because
			$ \max(\alpha) > [k-1] $. By the maximality of $ W $, $ d(\xi_{W}) $ cannot contain
			$ x_{\rho_{j}} $ for $ j > j_{0} $. Therefore,
			$ \alpha' := \alpha + d\left(x_{W \setminus j_{0}} \cdot (\ell_{i_{0}} - \ell_{j_{0}} )
			\cdot \xi_{W} \right) $ is a representative of $ \beta $ with
			$ \max(\alpha') < \max(\alpha) $.
			This contradiction implies that $ \max(\alpha) \leq [k-1] $. 

			As $ \beta $ is a cycle, we also find that $ d \alpha \in 
			\bigoplus_{i=0}^{\dim \tau -1} \mathbf{x}_{i} \cdot H_{\bullet}(K^{sq}(\mathbb{Q}[\Star(\tau)])) $.
			Therefore, $ \alpha  \in 
			\bigoplus_{i=0}^{\dim \tau -1} \mathbf{x}_{i} \cdot H_{\bullet}(K^{sq}(\mathbb{Q}[\Star(\tau)])) $,
			and $ \beta = 0 $.
		\end{proof}

		Combined with Lemma \ref{lem:starIsomorphism}, this shows that
		\[ \coker\left[ \pi_{\tau}^{*}\colon
		H^{\bullet}\left(X_{\overline{\Star}(\tau)}\right)
		\to H^{\bullet}\left(X_{\overline{\Star}(\rho_{\tau})}\right) \right]   \cong
		\frac{H^{\bullet}(X_{\Star(\rho_{\tau})})}{H^{\bullet}(X_{\Star(\tau)})}
		\cong \bigoplus_{i=1}^{\dim \tau -1} H^{\bullet - 2i}(X_{\Star(\tau)}).\]

		The last lemma we need before the proof of Theorem \ref{thm:blowup} is a piece of homological 
		algebra that we will use to compare Mayer--Vietoris exact sequences.

		\begin{lem}\label{lem:LESBlowup}
			Suppose that $ A_{-}^{\bullet} $, $ B_{-}^{\bullet} $, 
			$ C_{-}^{\bullet} $, $ A_{+}^{\bullet} $, $ B_{+}^{\bullet} $,
			$ C_{+}^{\bullet} $ are graded modules such that $ \epsilon\colon
			B_{-}^{\bullet} \to B_{+}^{\bullet} $ is an injection,
			$ \phi\colon C_{-}^{\bullet} \to C_{+}^{\bullet} $ is an isomorphism,
			and these modules fit into a commutative diagram of long exact
			sequences
			\begin{center}
			\begin{tikzcd}
				\cdots \rar & C_{+}^{\bullet-1} \rar["\gamma_{+}"] & 
							A_{+}^{\bullet} \rar["\alpha_{+}"] & B_{+}^{\bullet} \rar["\beta_{+}"]
							& C_{+}^{\bullet} \rar["\gamma_{+}"] & \cdots \\
				\cdots \rar & C_{-}^{\bullet-1} \uar["\phi"] \rar["\gamma_{-}"]
							& A_{-}^{\bullet} \rar["\alpha_{-}"] \uar["\iota"] &
							B_{-}^{\bullet} \rar["\beta_{-}"] \uar["\epsilon"] &
							C_{-}^{\bullet} \rar["\gamma_{-}"] \uar["\phi"] & \cdots.
			\end{tikzcd}
			\end{center}
			Then, there is a long exact sequence
			\[ 
					\cdots \longrightarrow 
						\coker \phi \xrightarrow{\; \gamma_{-} \circ \phi^{-1} \circ
						\beta_{+} \; } A_{-}^{\bullet} \xrightarrow{\; \iota \;} 
						A_{+}^{\bullet}
						\xrightarrow{\; \alpha_{+} \; } \coker \phi \longrightarrow \cdots.
			\]
		\end{lem}

		\begin{proof}
			We note that the map 
			$ \gamma_{-} \circ \phi^{-1} \circ \beta_{+}\colon \coker \epsilon 
			\to A_{-}^{\bullet} $ is well-defined on the cokernel, as 
		  $ \gamma_{-} \circ \beta_{-} $ is the zero map on $ B_{-}^{\bullet}$.

			The proof of the exactness of the sequence is a diagram chase which we
			include for completeness.

			\textbf{Exactness at $ A_{-}^{\bullet} $:}
        The composition $ \iota \circ \gamma_{-} \circ \phi^{-1} \circ \beta_{+}\colon
        \coker \epsilon \to A^{\bullet}_{+} $
        is equal to $ \gamma_{+} \circ \beta_{+} = 0 $. Now suppose that $ a_{-} \in 
        \ker \iota $. Then, $ \epsilon \circ \alpha_{-} (a_{-}) = 0 $; this shows
        $ \alpha_{-}(a_{-}) =0 $, as $ \epsilon $ is injective. Choose $ c_{-} 
				\in C^{\bullet +1}_{-} $ such
        that $ \gamma_{-}(c_{-}) = a_{-} $. Then, $ \gamma_{+} \circ \phi(c_{-}) = 0 $,
        so we find $ b_{+} \in B_{+}^{\bullet +1} $ such that $ \beta_{+}(b_{+}) 
				= \phi(c_{-}) $.
        The coset $ b_{+} + \epsilon(B^{\bullet+1}_{-}) $ maps to $ a_{-} $ under
        $\gamma_{-} \circ \phi^{-1} \circ \beta_{+} $.

				\textbf{Exactness at $ A_{+}^{\bullet} $:}
        The composition $ \alpha_{+} \circ \iota \colon A_{+}^{\bullet} \to 
				\coker \epsilon $ factors through $ \epsilon\colon B^{\bullet}_{-} \to
        B^{\bullet}_{+} $ and therefore vanishes in the cokernel.
        Now suppose that $ a_{+} \in \ker (\alpha_{+} \colon A^{\bullet}_{+} \to 
				\coker \epsilon )$. Choose $ b_{-} \in 
				B^{\bullet}_{-} $
        such that $ \alpha_{+}(a_{+}) = \epsilon(b_{-}) $. 
				Note that $ \beta_{-}(b_{-}) = 0 $,
        as $ \beta_{+} \circ \alpha_{+}(a_{+}) = 0 $ and $ \phi $ is an isomorphism.
        Choose $ a_{-} \in A_{-}^{\bullet} $ such that $ \alpha_{-}(a_{-}) = b_{-} $.
        We find that $ a_{+} - \iota(a_{-}) \in \ker \alpha_{+} $, and we 
				choose $ c_{+} \in
        C^{\bullet+1}_{+} $ with $ \gamma_{+}(c_{+}) = a_{+} - \iota(a_{-}) $.
        Then, $ \iota(\gamma_{-} \circ \phi^{-1}(c_{+}) + a_{-}) = a_{+} $.

				\textbf{Exactness at $ \coker \epsilon $:} 
        The composition $ \gamma_{-} \circ \phi^{-1} \circ \beta_{+} \circ \alpha_{+} $
				factors through $ \beta_{+} \circ \alpha_{+} $
        and therefore vanishes. Now suppose that $ b_{+} + \epsilon(B^{\bullet}_{-}) \in
        \ker \gamma_{-} \circ \phi^{-1} \circ \beta_{+} $. Choose $ b_{-} \in 
				B^{\bullet}_{-} $
        such that $ \beta_{-}(b_{-}) = \phi^{-1}\circ \beta_{+}(b_{+}) $. We find 
        that $ b_{+} - \epsilon(b_{-}) \in \ker \beta_{+} $, so there is $ a_{+} \in 
				A^{\bullet}_{+}$
        such that $ \alpha_{+} (a_{+}) = b_{+} - \epsilon(b_{-}) $.  
        Thus, $ b_{+} + \epsilon(B_{\bullet}^{-}) $ is in the image of
        $ \alpha_{+} \colon A_{+}^{\bullet} \to \coker \epsilon $.
		\end{proof}

		We are now ready to prove Theorem \ref{thm:blowup}.

		\begin{proof}[Proof of Theorem \ref{thm:blowup}]
			Let $ \Sigma' = 
				\Sigma \setminus \tau $ be the subfan of $ \Sigma $ consisting of 
		  all cones which do not contain $ \tau $. Similarly, define
			$ \Sigma^{*} \setminus \rho_{\tau} $ to be the subfan of all cones which
			do not contain $ \rho_{\tau} $. By the construction 
			of the stellar subdivison, $ \Sigma' = 
			\Sigma^{*} \setminus \rho_{\tau} $, and this produces coverings
			\[ \Sigma = \overline{\Star}(\tau) \cup \Sigma', \;\;\;  
						\Sigma^{*}(\tau) 				= \overline{\Star}(\rho_{\tau})
						\cup \Sigma', \;\;\; \textrm{and} \;\;\; 
				\overline{\Star}(\tau) \cap \Sigma' = \overline{\Star}(\rho_{\tau})
						\cap \Sigma'.
			\]
			We view $ \overline{\Star}(\tau) $ as a tubular neighborhood of the 
			exceptional divisor and $ \Sigma' $ as the complement of the exceptional divisor.

			The blow-down map $ \pi_{\tau}\colon X_{\Sigma^{*}(\tau)} 
			\to X_{\Sigma} $ restricts to maps
			$ X_{\Sigma'} \to X_{\Sigma'} $,
			$ X_{\overline{\Star}(\rho_{\tau})} \to
			X_{\overline{\Star}(\tau)} $, and 
			$ X_{\Sigma' \cap \overline{\Star}(\rho_{\tau})}
			\to X_{\Sigma' \cap \overline{\Star}(\tau)} $.
			On cohomology, $ \pi_{\tau} $ induces commuting
			maps between the Mayer--Vietoris
			exact sequences 
			\begin{center}
							\begin{tikzcd}[cramped]
							\cdots \rar &  
							H^{\bullet}\left( X_{\Sigma^{*}(\tau)} \right) \rar 
							& H^{\bullet}\left(X_{\Sigma'} \right) \oplus
								H^{\bullet} \left( X_{\overline{\Star}(\rho_{\tau})} \right)
								\rar
							& H^{\bullet}\left(X_{\Sigma' \cap 
							\overline{\Star}(\rho_{\tau})}\right) \rar & \cdots \\
				\cdots \rar 
											&  H^{\bullet}\left( X_{\Sigma} \right)\rar \uar["\pi_{\tau}^{*}"] &
					H^{\bullet}\left(X_{\Sigma'} \right) \oplus
								H^{\bullet} \left( X_{\overline{\Star}(\tau)} \right)		 
											\rar \uar["\pi_{\tau}^{*}"] &
							 H^{\bullet}\left(X_{\Sigma' \cap 
											\overline{\Star}(\tau)}\right)\rar \uar["\pi_{\tau}^{*}"] & \cdots.
			\end{tikzcd}
			\end{center}
			From Lemmas \ref{lem:starIsomorphism} and 
			\ref{lem:cohomologyExceptionalDivisor}, the  map
			$H^{\bullet}\left(X_{\Sigma'} \right) \oplus
								H^{\bullet} \left( X_{\overline{\Star}(\tau)} \right)
				\to H^{\bullet}\left(X_{\Sigma'} \right) \oplus
								H^{\bullet} \left( X_{\overline{\Star}(\rho_{\tau})} \right) $
			is injective and the cokernel is isomorphic to 
			$\bigoplus_{i=1}^{\dim \tau -1} H^{\bullet - 2i}
			\left( X_{\Star(\tau)} \right) $.
			The hypotheses of Lemma \ref{lem:LESBlowup} are satisfied, and this
			produces a long exact sequence
			\begin{equation}\label{eq:unsplitLES}
						\cdots \to H^{i}\left( X_{\Sigma}\right) 
						\to H^{i}\left(X_{\Sigma^{*}(\tau)}\right) 
						\to \bigoplus_{i=1}^{\dim \tau -1} H^{\bullet - 2i}
						\left( X_{\Star(\tau)} \right) \to \cdots.
			\end{equation}
			The following lemma shows that the map 
			$H^{i}(X_{\Sigma^{*}(\tau)}) \to \bigoplus_{i=1}^{\dim \tau -1}
			H^{\bullet -2i}(X_{\Star(\tau)}) $ is surjective and the long exact sequence splits.		
		\end{proof}

		\begin{definition}
			For $ 1 \leq i < \dim \tau  $, let 
			$ \Psi^{i}\colon H^{\bullet -2i}(X_{\Star(\tau)}) 
			\to H^{\bullet}(X_{\Sigma^{*}(\tau)}) $ be given on Koszul homology by 
			$ \xi \mapsto x_{\rho_{\tau}}^{i}	\cdot \xi $. 
		\end{definition}

		\begin{lem}\label{lem:LESSplit}
			The image of $ \bigoplus_{i=1}^{\dim \tau -1} \Psi^{i} $
			surjects onto
			$ \bigoplus_{i=1}^{\dim \tau -1}
			H^{\bullet -2i}(X_{\Star(\tau)}) $.
		\end{lem}

		\begin{proof}
			The map $ H^{\bullet}(X_{\Sigma^{*}(\tau)}) \to \bigoplus_{i=1}^{\dim \tau -1}
			H^{\bullet - 2i}(X_{\Star}(\tau)) $ in \eqref{eq:unsplitLES} factors as
			\[
					H^{\bullet}\left( X_{\Sigma^{*}(\tau)} \right) \to \frac{H^{\bullet}(X_{\overline{\Star}
						(\rho_{\tau})})}{H^{\bullet}(X_{\overline{\Star}(\tau)})} \cong
					\bigoplus_{i=1}^{\dim \tau -1}
					H^{\bullet -2i}(X_{\Star(\tau)}).
			\]
			By Lemmas \ref{lem:starIsomorphism} and \ref{lem:cohomologyExceptionalDivisor}, 
			$ H^{\bullet}(X_{\overline{\Star} (\rho_{\tau})}) / H^{\bullet}(X_{\overline{\Star}(\tau)}) $
			is generated in Koszul homology by elements of the form $ x_{\rho}^{j} \cdot \xi $
			where $ \rho \in \tau(1) $, $ 1 \leq j \leq \dim \tau -1 $, and  $ \xi $ a cycle in
			$ K_{\bullet}(\mathbb{Q}[\Star(\tau)]) $. By induction on the exponent
			$j$, we will show that $ x_{\rho}^{j} \cdot \xi $ is in the image of 
			$ \bigoplus_{i=1}^{\dim \tau-1} \Psi^{i} $.

			For $ j = 1 $, take $ x_{\rho_{\tau}}  \cdot
			\xi  \in K_{\bullet}(\mathbb{Q}[\Sigma^{*}(\tau)]) $. This is 
			a cycle, and its image in 
			$ K_{\bullet}(\mathbb{Q}[\overline{\Star}(\rho_{\tau})]) $
			is $ x_{\rho_{\tau}} \cdot \xi $. Choose $ \ell \in N^{\vee} $
			such that, in $ K_{\bullet}(\mathbb{Q}[\overline{\Star}(\rho_{\tau})]) $,
			$ d \ell = x_{\rho} + x_{\rho_{\tau}} + \mathbf{y} $ with $ \mathbf{y}
						\in \mathbb{Q}[\Star(\tau)] $. Thus, $ d (\xi \cdot \ell) = 
			 x_{\rho} \cdot \xi + x_{\rho_{\tau}} \cdot \xi + \mathbf{y} \cdot 
			\xi $. This shows that
			$ - x_{\rho_{\tau}} \cdot \xi $ is homologous to $ x_{\rho} \cdot \xi $
			in the cokernel
			$ H_{\bullet}(K(\mathbb{Q}[\overline{\Star}(\rho_{\tau})]))/
			H_{\bullet}(K(\mathbb{Q}[\overline{\Star}(\tau)])) $.

			For induction, take $ x_{\rho}^{j} \cdot \xi $
			with $ j > 1 $. Then,  $ x_{\rho_{\tau}}^{j} \cdot \xi $
			as a cycle in $K_{\bullet}(\mathbb{Q}[\Sigma^{*}(\tau)]) $ whose image
			in $ K_{\bullet}(\mathbb{Q}[\overline{\Star}(\rho_{\tau})]) $ is 
			$ x_{\rho_{\tau}}^{j} \cdot \xi $. The boundary
			$ d \left( \sum_{m=1}^{j} (-1)^{m} x_{\rho_{\tau}}^{j-m} (x_{\rho}
			+ \mathbf{y})^{m-1} \cdot \xi \right)$
			shows the class of $ \pm x_{\rho_{\tau}}^{j} \cdot \xi $
			is homologous to the class of $ x_{\rho}^{j} \cdot \xi $ in 
			$ H^{\bullet}(X_{\overline{\Star} (\rho_{\tau})}) / H^{\bullet}(X_{\overline{\Star}(\tau)}) $ 
			up to the sum of classes
			of the form $ x_{\rho}^{k} \cdot \xi_{k} $ for $ k < j $, with 
			$ \xi_{k} $ a cycle in $ K_{\bullet}(\mathbb{Q}[\Star(\tau)]) $.
			By the inductive hypothesis, the class of
			$ x_{\rho}^{j} \cdot \xi  $
			is therefore in the image of $ \bigoplus_{i=1}^{\dim \tau -1} \Psi^{i} $.
		\end{proof}

		We can strengthen the exact sequence in Theorem \ref{thm:blowup} by representing the cohomology of 
		$ X_{\Sigma^{*}(\tau)} $ in terms
		of the Koszul homology of $ K_{\bullet}(\mathbb{Q}[\Sigma]) $
		and $ K_{\bullet}(\mathbb{Q}[\Star(\tau)]) $. 		

		\begin{cor}\label{cor:blowup}
			Let $ X_{\Sigma^{*}(\tau)} $ be the blow-up of 
			$ X_{\Sigma} $ along $ V(\tau) $. Then, the map
			of complexes
			\[ 
					K_{\bullet}(\mathbb{Q}[\Sigma]) \oplus \bigoplus_{i=1}^{\dim \tau -1}
					x_{\rho_{\tau}}^{i} \cdot  K_{\bullet}(\mathbb{Q}[\Star(\tau)])
					\xrightarrow{} K_{\bullet}(\mathbb{Q}[\Sigma^{*}(\tau)])
						\]
						is a quasi-isomorphism.
		\end{cor}

		\begin{proof}
			By Theorem \ref{thm:blowup}, the homologies of the two complexes have the
			same dimension. The map $ K_{\bullet}(\mathbb{Q}[\Sigma]) \to
			K_{\bullet}(\mathbb{Q}[\Sigma^{*}(\tau)]) $ represents the 
			map $ H^{\bullet}(X_{\Sigma}) \to H^{\bullet}(X_{\Sigma^{*}(\tau)}) $.
			By the proof of Lemma \ref{lem:LESSplit}, the cycles of 
			$ \bigoplus_{i=1}^{\dim \tau -1}
			x_{\rho_{\tau}}^{i} \cdot  K_{\bullet}(\mathbb{Q}[\Star(\tau)])$
			surject onto $  \bigoplus_{i=1}^{\dim \tau -1}
			H^{\bullet -2i}(X_{\Star(\tau)})$. 
			Theorem \ref{thm:blowup} implies 
			\[ 
				K_{\bullet}(\mathbb{Q}[\Sigma]) \oplus \bigoplus_{i=1}^{\dim \tau -1}
				x_{\rho_{\tau}}^{i} \cdot  K_{\bullet}(\mathbb{Q}[\Star(\tau)])
				\xrightarrow{} K_{\bullet}(\mathbb{Q}[\Sigma^{*}(\tau)])
			\]
			induces an surjection on homology and, by dimension, an 
			isomorphism.
		\end{proof}

	\bibliography{bibliography}
\end{document}